\numberwithin{equation}{section}
\newtheorem{theorem}{Theorem}[section]
\newtheorem{lemma}[theorem]{Lemma}
\newtheorem{proposition}[theorem]{Proposition}
\theoremstyle{definition}
\newtheorem{definition}[theorem]{Definition}
\newtheorem{remark}[theorem]{Remark}
\newcommand{\N}{\mathbb N}
\newcommand{\R}{\mathbb R}
\newcommand{\C}{\mathbb C}
\newcommand{\Sb}{\mathbb S}
\newcommand{\varep}{\varepsilon}
\title[NLS Attractive Inverse-power Potentials]
{On nonlinear Schr\"odinger equations with attractive inverse-power potentials}
\author[V. D. Dinh]{Van Duong Dinh}
\address[V. D. Dinh]{Laboratoire Paul Painlev\'e UMR 8524, Universit\'e de Lille CNRS, 59655 Villeneuve d'Ascq Cedex, France
and 
Department of Mathematics, HCMC University of Pedagogy, 280 An Duong Vuong, Ho Chi Minh, Vietnam}
\email{contact@duongdinh.com}
\subjclass[2010]{35Q44; 35Q55}
\keywords{Nonlinear Schr\"odinger equation, Inverse-power potential, Standing waves, Stability, Global well-posedness, Blow-up}
\begin{document}
	
	\begin{abstract}
	We study the Cauchy problem for nonlinear Schr\"odinger equations with attractive inverse-power potentials. By using variational arguments, we first determine a sharp threshold of global well-posedness and blow-up for the equation in the mass-supercritical case. We next study the existence and orbital stability of standing waves for the problem in the mass-subcritical and mass-critical cases. In the mass-critical case, we give a detailed description of the blow-up behavior of standing waves when the mass tends to a critical value. 
	\end{abstract}

	\maketitle

	\section{Introduction and main results}
	\label{S1}
	We consider the Cauchy problem for nonlinear Schr\"odinger equations with attractive inverse-power potentials
	\begin{equation} \label{NLS-att}
		\left\{ 
		\begin{array}{rcl}
			i\partial_t u + \Delta u + |x|^{-\sigma} u &=& \pm |u|^\alpha u, \quad (t,x) \in \R \times \R^d, \\
			u(0)&=& u_0,
		\end{array}
		\right.
	\end{equation}
	where $u: \mathbb{R} \times \mathbb{R}^d \rightarrow \mathbb{C}$, $u_0: \mathbb{R}^d \rightarrow \mathbb{C}$, $0<\sigma<\min\{2,d\}$ and $\alpha>0$. The plus (resp. minus) sign in front of the nonlinearity corresponds to the defocusing (resp. focusing) case. 
	
	The Schr\"odinger equations with inverse-power potentials have attracted much attention recently. In the case of inverse-square potential $\sigma=2$, one has the following results: see Burq-Planchon-Stalker-Tahvildar-Zadeh \cite{BPST} for Strichartz estimates; Okazawa-Suzuki-Yokota \cite{OSY} for the local and global well-posedness of the Cauchy problem; Zhang-Zheng \cite{ZZ} for the energy scattering in the defocusing case; Killip-Miao-Visan-Zhang-Zheng \cite{KMVZZ-sob} for Sobolev spaces adapted to the Schr\"odinger operator with inverse-square potentials; Killip-Murphy-Visan-Zheng \cite{KMVZ}, Zheng \cite{Zheng}, Lu-Miao-Murphy \cite{LMM} and Dinh \cite{Dinh-inv} for the global existence, blow-up and energy scattering in the focusing case; Killip-Miao-Visan-Zhang-Zheng \cite{KMVZZ} for the global existence and scattering in the energy-critical case; Csobo-Genoud \cite{CG} for the classification of minimial mass blow-up solutions and Bensouilah-Dinh-Zhu \cite{BDZ} for the stability and instability of standing waves. In the case of Coulomb potential $\sigma=1$, one has results of Benguria-Jeanneret \cite{BJ} for the existence and uniqueness of positive solutions of semilinear elliptic equations; Chadam-Glassey \cite{ChG}, Hayashi-Ozawa \cite{HO} and Lions \cite{Lions} for the global existence and time decay of global solutions for the Hartree equations and Miao-Zhang-Zheng \cite{MZZ} for the global existence, blow-up and energy scattering. In the case of slowly decaying potentials $0<\sigma<2$, we refer to Mizutani \cite{Mizutani} for Strichartz estimates; Fukaya-Ohta \cite{FO} for the strong instability of standing waves; Guo-Wang-Yao \cite{GWY} for the blow-up and energy scattering of the focusing 3D cubic NLS and Li-Zhao \cite{Li-Zhao} for the orbital stability of standing waves.
	
	This paper is a continuation of \cite{Dinh-rep} where nonlinear Schr\"odinger equations with repulsive (i.e. the minus sign in front of $|x|^{-\sigma}u$) inverse-power potentials in the energy space were considered. The local well-posedness (LWP) for \eqref{NLS-att} in the energy space $H^1$ was studied in \cite{Dinh-rep}. More precisely, the author showed that \eqref{NLS-att} is locally well-posed in $H^1$ for both energy-subcritical and energy-critical cases. Moreover, the time of existence depends only on the $H^1$-norm of initial data. In the energy-subcritical case, this LWP coincides with the usual local theory. In the energy-critical case, this LWP is stronger than the usual one in which the time of existence depends not only on the $H^1$-norm of initial data but also on its profile. The proof is based on the perturbation argument of Zhang in \cite{Zhang} by using Strichartz estimates in Lorentz spaces and viewing the potential as a sub-critical nonlinear term. A direct consequence of this local theory is the following global well-posedness result for \eqref{NLS-att}.
	\begin{theorem} [Global existence \cite{Dinh-rep}] \label{theo-gwp-1}
		Let $u_0 \in H^1$. Suppose that
		\begin{itemize}
			\item in the defocusing case:
			\begin{itemize}
				\item (Energy-subcritical case) $0<\sigma<\min\{2,d\}$ and $0<\alpha<\frac{4}{d-2}$ if $d\geq 3$ ($0<\alpha<\infty$ if $d=1,2$);
				\item (Energy-critical case) $0<\sigma<2$ if $d\geq 4$  ($0<\sigma<\frac{3}{2}$ if $d=3$) and $\alpha=\frac{4}{d-2}$;
			\end{itemize}
			\item in the focusing case:
			\begin{itemize}
				\item (Mass-subcritical case) $0<\sigma<\min\{2,d\}$ and $0<\alpha<\frac{4}{d}$;
				\item (Mass-critical case) $0<\sigma<\min\{2,d\}$, $\alpha=\frac{4}{d}$ and $\|u_0\|_{L^2} <\|Q\|_{L^2}$, where $Q$ is the unique (up to symmetries) positive radial solution to the elliptic equation
				\begin{align} \label{ell-equ-mas-cri}
				\Delta Q- Q + |Q|^{\frac{4}{d}} Q =0.
				\end{align}
			\end{itemize}
		\end{itemize}
		Then there exists a unique global solution to \eqref{NLS-att}. Moreover, the global solution $u$ satisfies for $0<\sigma<2$ if $d\geq 4$ ($0<\sigma<\frac{3}{2}$ if $d=3$) and any compact interval $J\subset \R$,
		\[
		\sup_{(p,q)\in S} \|u\|_{L^p(J, W^{1,q})} \leq C(\|u_0\|_{H^1}, |J|),
		\]
		where $(p,q) \in S$ means that $(p,q)$ is a Schr\"odinger admissible pair.
	\end{theorem}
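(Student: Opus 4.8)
The plan is to derive the theorem from the local well-posedness result of \cite{Dinh-rep} by establishing a uniform-in-time a priori bound on $\|u(t)\|_{H^1}$; once such a bound is available, the fact that the local existence time depends only on the $H^1$-norm of the data lets the local solution be iterated indefinitely, producing a global one. The conserved quantities are the mass $M(u)=\|u\|_{L^2}^2$ and the energy
\[
E(u)=\frac{1}{2}\|\nabla u\|_{L^2}^2-\frac{1}{2}\int_{\R^d}|x|^{-\sigma}|u|^2\,dx\pm\frac{1}{\alpha+2}\|u\|_{L^{\alpha+2}}^{\alpha+2},
\]
with the plus (resp.\ minus) sign in the defocusing (resp.\ focusing) case. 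Mass conservation gives $\|u(t)\|_{L^2}=\|u_0\|_{L^2}$, so the task reduces to bounding $\|\nabla u(t)\|_{L^2}$ uniformly in $t$.

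The structural fact underlying every case, valid because $\sigma<2$, is that the attractive potential is form-small relative to $-\Delta$: for every $\varep>0$ there is $C_\varep>0$ with
\[
\int_{\R^d}|x|^{-\sigma}|u|^2\,dx\leq \varep\|\nabla u\|_{L^2}^2+C_\varep\|u\|_{L^2}^2.
\]
I would prove this by splitting the integral at $|x|=R$: on $\{|x|>R\}$ one bounds $|x|^{-\sigma}\leq R^{-\sigma}$ and uses mass, while on $\{|x|\leq R\}$ one writes $|x|^{-\sigma}\leq R^{2-\sigma}|x|^{-2}$ and invokes Hardy's inequality when $d\geq 3$ (a H\"older--Sobolev estimate handles $d=1,2$, where $\sigma<d$), then chooses $R$ small.

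With this in hand the energy is treated case by case. In the defocusing case the nonlinear term is nonnegative and can be dropped, so choosing $\varep=\tfrac14$ above yields $E(u_0)\geq\tfrac14\|\nabla u\|_{L^2}^2-C\|u_0\|_{L^2}^2$ and hence the $H^1$ bound; this covers the energy-subcritical and energy-critical subcases, the latter relying on the strong local theory of \cite{Dinh-rep}. In the focusing mass-subcritical case I would apply the Gagliardo--Nirenberg inequality $\|u\|_{L^{\alpha+2}}^{\alpha+2}\leq C\|\nabla u\|_{L^2}^{d\alpha/2}\|u\|_{L^2}^{\alpha+2-d\alpha/2}$; since $\alpha<\tfrac4d$ forces $\tfrac{d\alpha}{2}<2$, Young's inequality absorbs the nonlinear term into a small multiple of $\|\nabla u\|_{L^2}^2$, again leaving a bound depending only on $E(u_0)$ and $\|u_0\|_{L^2}$.

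The delicate case, and the main obstacle, is the focusing mass-critical case $\alpha=\tfrac4d$, where the Gagliardo--Nirenberg exponent on $\|\nabla u\|_{L^2}$ is exactly $2$ and the nonlinear term competes directly with the kinetic energy. Here I would use the sharp (Weinstein) inequality
\[
\frac{1}{\alpha+2}\|u\|_{L^{\alpha+2}}^{\alpha+2}\leq \frac{1}{2}\left(\frac{\|u\|_{L^2}}{\|Q\|_{L^2}}\right)^{4/d}\|\nabla u\|_{L^2}^2,
\]
whose optimizer is the ground state $Q$ solving \eqref{ell-equ-mas-cri}. The hypothesis $\|u_0\|_{L^2}<\|Q\|_{L^2}$, together with mass conservation, keeps the factor $(\|u(t)\|_{L^2}/\|Q\|_{L^2})^{4/d}$ below a fixed $1-\delta<1$, so the nonlinear term takes up only a strict fraction of $\tfrac12\|\nabla u\|_{L^2}^2$ and leaves the positive remainder $\tfrac{\delta}{2}\|\nabla u\|_{L^2}^2$; choosing $\varep<\tfrac{\delta}{2}$ in the potential estimate then closes the argument. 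Finally, the Strichartz bound on a compact interval $J$ follows by running the Lorentz-space Strichartz estimates of the local theory on a partition of $J$ into finitely many subintervals on which the local iteration applies, the number of subintervals being controlled by the uniform $H^1$ bound.
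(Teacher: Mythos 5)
Your proposal is correct and is essentially the argument the paper intends: Theorem \ref{theo-gwp-1} is imported from \cite{Dinh-rep} as ``a direct consequence'' of the local theory there (whose existence time depends only on the $H^1$-norm of the data), combined with conservation of mass and energy and exactly the a priori bounds you give --- form-smallness of $|x|^{-\sigma}$ via Hardy--Young as in \eqref{hardy}--\eqref{har-ine-app}, Gagliardo--Nirenberg with Young in the mass-subcritical case as in \eqref{gag-nir-ine-app}, and the sharp Weinstein inequality \eqref{gag-nir-ine-app-mas-cri} with $\|u_0\|_{L^2}<\|Q\|_{L^2}$ in the mass-critical case. Your splitting proof of the potential estimate at $|x|=R$ is a harmless variant of the paper's fractional Hardy route, and your closing remark that the energy-critical case needs the \emph{strong} local theory (existence time independent of the profile) identifies precisely the point that makes the globalization work there.
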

	
	\subsection{Sharp threshold for global existence and blow-up}
	The first part of this paper is devoted to the global well-posedness for the focusing problem \eqref{NLS-att} in the mass-supercritical case. Before stating our results, let us introduce some notations. By a standing wave, we mean a solution to the focusing problem \eqref{NLS-att} of the form $e^{i\omega t} \phi(x)$, where $\omega \in \R$ is a frequency and $\phi \in H^1$ is a nontrivial solution to the elliptic equation
	\begin{align} \label{ell-equ-att}
	-\Delta \phi + \omega \phi -  |x|^{-\sigma} \phi - |\phi|^\alpha \phi =0. 
	\end{align}
	It is well-known (see e.g. \cite[Theorem 8.38]{GT}) that the minimizing problem
	\[
	\mu_1:= \inf\left\{ \|\nabla v\|^2_{L^2} - G(v) \ : \ v \in H^1, \|v\|_{L^2}=1 \right\}
	\]
	is attained by a positive function $\Phi$, where 
	\begin{align} \label{def-G}
	G(v):= \int |x|^{-\sigma} |v(x)|^2 dx.
	\end{align}
	Moreover, $\mu_1$ is the simple first eigenvalue corresponding to the eigenfunction $\Phi$ of the operator $-\Delta - |x|^{-\sigma}$. We also have from the Virial Theorem (see e.g. \cite[Theorem 6.2.8]{deOliveira}) that $\mu_1$ is negative. By the definition of $\mu_1$, we see that
	\begin{align} \label{eig-est}
	\mu_1 \|v\|^2_{L^2} \leq \|\nabla v\|^2_{L^2} - G(v)
	\end{align}
	for any $v \in H^1$. It is worth noticing that if $\omega \leq -\mu_1$, then the equation \eqref{ell-equ-att} does not admit positive solutions. In fact, suppose that $\phi$ is a positive solution of \eqref{ell-equ-att}. By multiplying both sides of \eqref{ell-equ-att} with $\Phi$ and integrating by parts, we get
	\[
	(\omega + \mu_1) \int \phi(x) \Phi(x) dx = \int \phi^{\alpha+1}(x) \Phi(x) >0.
	\]
	In the case $\omega>-\mu_1$, there exists at least one solution to \eqref{ell-equ-att} which is spherically symmetric and positive. To see this, we define the following action functional
	\begin{align*}
	S_\omega(v):= E(v) + \frac{\omega}{2} M(v) = \frac{1}{2} \|\nabla v\|^2_{L^2} -\frac{1}{2} G(v) + \frac{\omega}{2} \|v\|^2_{L^2} - \frac{1}{\alpha+2} \|v\|^{\alpha+2}_{L^{\alpha+2}},
	\end{align*}
	and the corresponding Nehari functional
	\begin{align*}
	K_\omega(v) := \left.\partial_\lambda S_\omega(\lambda v) \right|_{\lambda=1} = \|\nabla v\|^2_{L^2} - G(v) + \omega \|v\|^2_{L^2} - \|v\|^{\alpha+2}_{L^{\alpha+2}}.
	\end{align*}
	Note that the elliptic equation \eqref{ell-equ-att} can be written as $S'_\omega(\phi) =0$. Consider the minimizing problem 
	\begin{align} \label{d-ome}
	d(\omega) := \inf \left\{S_\omega(v) \ : \ v \in H^1 \backslash \{0\}, K_\omega(v) =0 \right\},
	\end{align}
	and define the set of all minimizers for \eqref{d-ome} by
	\[
	\mathcal{M}_\omega:= \left\{ v \in H^1\backslash \{0\} \ : \ K_\omega(v) =0, S_\omega(v) = d(\omega) \right\}.
	\]
	
	\begin{proposition}  \label{prop-exi-min}
		Let $d\geq 1$, $0<\sigma <\min\{2,d\}$ and $0<\alpha<\frac{4}{d-2}$ if $d\geq 3$ ($0<\alpha<\infty$ if $d=1,2$). If $\omega>-\mu_1$, then $d(\omega)>0$ and $d(\omega)$ is attained by a function which is a solution to the elliptic equation \eqref{ell-equ-att}. Moreover, every minimizer for $d(\omega)$ is of the form $e^{i\theta} \phi(x)$, where $\phi$ is a positive radially symmetric function.
	\end{proposition}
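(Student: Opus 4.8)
The plan is to recast the constrained problem \eqref{d-ome} using the quadratic form of the shifted operator $-\Delta-|x|^{-\sigma}+\omega$. Since $\mu_1<0$ and $\omega>-\mu_1$, one has $\omega>0$, and I would first check that
\[
\|v\|_\omega^2 := \|\nabla v\|_{L^2}^2 - G(v) + \omega\|v\|_{L^2}^2
\]
is a norm equivalent to $\|\cdot\|_{H^1}$: the upper bound is immediate from $G(v)\ge0$ and $\omega>0$, while coercivity $\|v\|_\omega^2\gtrsim\|v\|_{H^1}^2$ follows by combining \eqref{eig-est} with the infinitesimal form-boundedness $G(v)\le\varep\|\nabla v\|_{L^2}^2+C_\varep\|v\|_{L^2}^2$ (valid since $\sigma<\min\{2,d\}$). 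In this language $S_\omega(v)=\tfrac12\|v\|_\omega^2-\tfrac1{\alpha+2}\|v\|_{L^{\alpha+2}}^{\alpha+2}$ and $K_\omega(v)=\|v\|_\omega^2-\|v\|_{L^{\alpha+2}}^{\alpha+2}$, so on the constraint $K_\omega(v)=0$ one has the clean identity $S_\omega(v)=\tfrac{\alpha}{2(\alpha+2)}\|v\|_\omega^2$. Since $\alpha+2<2^*$, Gagliardo--Nirenberg gives $\|v\|_{L^{\alpha+2}}^{\alpha+2}\lesssim\|v\|_\omega^{\alpha+2}$, and on the manifold this reads $\|v\|_\omega^2\lesssim\|v\|_\omega^{\alpha+2}$, forcing $\|v\|_\omega^2\ge c_0>0$; hence $d(\omega)\ge\tfrac{\alpha}{2(\alpha+2)}c_0>0$.

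For existence I would take a minimizing sequence $(v_n)$ on the manifold and replace each $v_n$ by the Schwarz symmetrization $v_n^\ast$ of $|v_n|$, rescaled by the unique factor $\lambda_n>0$ returning it to $\{K_\omega=0\}$. The diamagnetic and P\'olya--Szeg\H{o} inequalities do not increase $\|\nabla\cdot\|_{L^2}^2$, the rearrangement inequality does not decrease $G$ (as $|x|^{-\sigma}$ is radially decreasing), and the $L^2$ and $L^{\alpha+2}$ norms are preserved; thus $\lambda_n\le1$ and $S_\omega(\lambda_nv_n^\ast)\le S_\omega(v_n)$, yielding a radial, nonnegative, decreasing minimizing sequence bounded in $H^1$. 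The monotonicity supplies the pointwise bound $|v_n^\ast(x)|\lesssim|x|^{-d/2}$, so (since $\alpha>0$) the $L^{\alpha+2}$-mass at infinity is uniformly small; combined with Rellich compactness on balls this gives, after passing to a weak limit $\phi$, strong convergence $v_n^\ast\to\phi$ in $L^{\alpha+2}$. In particular $\|\phi\|_{L^{\alpha+2}}^{\alpha+2}=\lim\|v_n^\ast\|_{L^{\alpha+2}}^{\alpha+2}\ge c_0$, so $\phi\neq0$.

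The crux, and where I expect the main difficulty, is to verify that $\phi$ actually attains $d(\omega)$. The key auxiliary fact is that $G$ is sequentially weakly continuous on $H^1$: splitting $\int|x|^{-\sigma}\big||v_n^\ast|^2-|\phi|^2\big|$ at $|x|=R$, the tail is controlled by $R^{-\sigma}\sup_n\|v_n^\ast\|_{L^2}^2$, while the inner integral tends to $0$ by Rellich compactness on $B_R$ together with H\"older (using $|x|^{-\sigma}\in L^s(B_R)$ for some $s$ with $d/2<s<d/\sigma$, which is possible precisely because $\sigma<2$). Granting this, weak lower semicontinuity of $\|\nabla\cdot\|_{L^2}^2$ and $\|\cdot\|_{L^2}^2$ and the strong $L^{\alpha+2}$ convergence give $K_\omega(\phi)\le\liminf K_\omega(v_n^\ast)=0$. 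Were $K_\omega(\phi)<0$, rescaling $\phi$ by some $\lambda<1$ would land on the manifold with $S_\omega(\lambda\phi)=\tfrac{\alpha}{2(\alpha+2)}\lambda^2\|\phi\|_\omega^2<\tfrac{\alpha}{2(\alpha+2)}\|\phi\|_\omega^2\le d(\omega)$, a contradiction; hence $K_\omega(\phi)=0$ and $S_\omega(\phi)\le\liminf S_\omega(v_n^\ast)=d(\omega)$, so $\phi\in\Mcal_\omega$.

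Finally I would identify minimizers with solutions and pin down their structure. A Lagrange-multiplier argument yields $S_\omega'(\phi)=\Lambda K_\omega'(\phi)$; pairing with $\phi$ and using $\langle S_\omega'(\phi),\phi\rangle=K_\omega(\phi)=0$ together with $\langle K_\omega'(\phi),\phi\rangle=2\|\phi\|_\omega^2-(\alpha+2)\|\phi\|_{L^{\alpha+2}}^{\alpha+2}=-\alpha\|\phi\|_\omega^2\neq0$ forces $\Lambda=0$, so $S_\omega'(\phi)=0$ and $\phi$ solves \eqref{ell-equ-att}. For the structure statement, applying the same symmetrization to an arbitrary minimizer $u$ must turn every inequality above into an equality: equality in the diamagnetic inequality forces $u=e^{i\theta}|u|$ for a constant $\theta$, and equality in the rearrangement of the strictly decreasing weight $|x|^{-\sigma}$ forces $|u|$ to be radially symmetric and decreasing about the origin. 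Since $\omega>0$, the strong maximum principle applied to $-\Delta|u|+\omega|u|=(|x|^{-\sigma}+|u|^\alpha)|u|\ge0$ then gives $|u|>0$, which completes the proof.
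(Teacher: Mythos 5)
Your proposal is correct and follows essentially the same route as the paper: positivity of $d(\omega)$ via the norm equivalence $H_\omega(v)\sim \|v\|^2_{H^1}$ and the Sobolev embedding, existence via a symmetrized (radial, nonincreasing) minimizing sequence, the Strauss-type compact embedding, weak continuity of $G$, and the scaling-back trick $\lambda\leq 1$ to land back on the Nehari manifold and force all inequalities to be equalities. The only differences are cosmetic: your tail bound $|x|^{-\sigma}\leq R^{-\sigma}$ in the proof that $G(v_n)\to G(v)$ is a mild simplification of the paper's H\"older splitting at $|x|=1$, and you spell out the Lagrange-multiplier ($\Lambda=0$) and symmetry/positivity arguments that the paper delegates to the references \cite{FO,FO-poten} and to the analogous Step 3 in its Section \ref{S4}.
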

	
	Notice that $\phi$ solves the ordinary differential equation
	\begin{align} \label{ordinary-equation}
	\phi''(r) + \frac{d-1}{r} \phi'(r) + (r^{-\sigma} -\omega) \phi(r) + \phi^{\alpha+1}(r) = 0, \quad r\in (0,+\infty).
	\end{align}
	Using the general results of Shioji-Watanabe \cite{SW}, we have (see Appendix) that for any $\omega>-\mu_1$, $0<\sigma<1$, $d\geq 3$ and $0<\alpha<\frac{4}{d-2}$, there exists a unique positive solution to \eqref{ordinary-equation}. A same argument has been used in \cite{AD} to show the uniqueness of positive ground states for the inhomogeneous Gross-Pitaevskii equation.
	
	We now denote the set of nontrivial solutions of \eqref{ell-equ-att} by
	\[
	\mathcal{A}_\omega:= \left\{ v \in H^1 \backslash \{0\} \ : \ S'_\omega(v) =0\right\}.
	\]
	\begin{definition}
		A function $\phi \in \mathcal{A}_\omega$ is called a ground state for \eqref{ell-equ-att} if it minimizes $S_\omega$ over the set $\mathcal{A}_\omega$. The set of ground states for \eqref{ell-equ-att} is denoted by $\mathcal{G}_\omega$. In particular,
		\[
		\mathcal{G}_\omega = \left\{ \phi \in \mathcal{A}_\omega \ : \ S_\omega(\phi) \leq S_\omega(v), \ \forall v \in \mathcal{A}_\omega \right\}.
		\]
	\end{definition}

	\begin{proposition} [Existence of ground states \cite{FO}] \label{prop-exi-gro-sta}
		Let $d\geq 1$, $0<\sigma <\min\{2,d\}$ and $0<\alpha<\frac{4}{d-2}$ if $d\geq 3$	($0<\alpha<\infty$ if $d=1,2$). If $\omega>-\mu_1$, then the set $\mathcal{G}_\omega$ is not empty, and it is characterized by
		\[
		\mathcal{G}_\omega = \left\{ v \in H^1 \backslash \{0\} \ : \ S_\omega(v) = d(\omega), \ K_\omega(v) =0 \right\}.
		\]		
	\end{proposition}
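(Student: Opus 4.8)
The plan is to identify the set $\mathcal{G}_\omega$ of ground states with the set $\mathcal{M}_\omega$ of minimizers for the constrained problem $d(\omega)$, whose non-emptiness and basic properties are already furnished by Proposition \ref{prop-exi-min}. The conceptual engine is the observation that every nontrivial critical point of $S_\omega$ automatically lies on the Nehari constraint $\{K_\omega=0\}$: by the very definition $K_\omega(v) = \left.\partial_\lambda S_\omega(\lambda v)\right|_{\lambda=1} = \scal{S'_\omega(v), v}$, so $v \in \mathcal{A}_\omega$ forces $K_\omega(v)=0$. This inclusion $\mathcal{A}_\omega \subset \left\{ v \in H^1 \backslash \{0\} : K_\omega(v)=0 \right\}$ turns every element of $\mathcal{A}_\omega$ into an admissible competitor for $d(\omega)$.

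First I would establish the sandwiching that pins down $\inf_{\mathcal{A}_\omega} S_\omega$. On one hand, the inclusion above gives $S_\omega(v) \geq d(\omega)$ for every $v \in \mathcal{A}_\omega$. On the other hand, Proposition \ref{prop-exi-min} produces a minimizer $\phi \in \mathcal{M}_\omega$ which solves \eqref{ell-equ-att}, hence $\phi \in \mathcal{A}_\omega$ with $S_\omega(\phi) = d(\omega)$. Combining the two facts shows that $d(\omega) = \min_{v \in \mathcal{A}_\omega} S_\omega(v)$ and that this minimum is attained, so $\mathcal{G}_\omega \neq \emptyset$ and in fact $\mathcal{G}_\omega = \left\{ v \in \mathcal{A}_\omega : S_\omega(v) = d(\omega) \right\}$.

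It then remains to verify the two inclusions of the stated characterization. For $\mathcal{G}_\omega \subset \mathcal{M}_\omega$, take $v \in \mathcal{G}_\omega$; then $v \in \mathcal{A}_\omega$ gives $K_\omega(v)=0$ by the Nehari identity, while $S_\omega(v)=d(\omega)$ by the previous step, so $v \in \mathcal{M}_\omega$. For the reverse inclusion $\mathcal{M}_\omega \subset \mathcal{G}_\omega$, take $v \in \mathcal{M}_\omega$; Proposition \ref{prop-exi-min} guarantees that $v$ solves \eqref{ell-equ-att}, i.e. $v \in \mathcal{A}_\omega$, while $S_\omega(v) = d(\omega) = \min_{\mathcal{A}_\omega} S_\omega$ exhibits $v$ as a ground state. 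This closes the argument.

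I expect the genuinely analytic difficulty to sit entirely inside Proposition \ref{prop-exi-min} rather than here: establishing that the infimum $d(\omega)$ is positive and attained, and that a minimizer on the Nehari set is an honest solution of \eqref{ell-equ-att} (via a Lagrange-multiplier computation showing the multiplier cannot annihilate the equation, together with the compactness needed to extract a convergent minimizing sequence), is where the work lives. Granting that proposition, the present statement reduces to a clean bookkeeping argument; the only point requiring a moment of care is the Nehari identity $K_\omega(v) = \scal{S'_\omega(v), v}$, which relies on $S_\omega$ being $C^1$ on $H^1$ and on the hypotheses imposed on $\alpha$ and $\sigma$ ensuring that the relevant functionals are well-defined and differentiable.
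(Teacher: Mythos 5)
Your proposal is correct and follows essentially the same route as the paper: the paper also reduces this proposition to the non-emptiness of $\mathcal{M}_\omega$ (Proposition \ref{prop-exi-min}, via Lemma \ref{lem-non-emp-M-ome}) combined with the identification $\mathcal{M}_\omega \equiv \mathcal{G}_\omega$, which it simply cites as well-known from \cite{FO, FO-poten} instead of writing out. Your explicit Nehari bookkeeping --- critical points satisfy $K_\omega=0$, one minimizer lies in $\mathcal{A}_\omega$, hence $d(\omega)=\min_{\mathcal{A}_\omega}S_\omega$ and the two sets coincide, with the reverse inclusion resting on the Lagrange-multiplier fact that \emph{every} minimizer solves \eqref{ell-equ-att} (valid since $\left\langle K'_\omega(v),v\right\rangle=-\alpha H_\omega(v)<0$ when $\omega>-\mu_1$) --- is precisely the cited standard argument.
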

	
	We now denote the functional
	\[
	Q(v):= \|\nabla v\|^2_{L^2} - \frac{\sigma}{2} G(v) - \frac{\beta}{\alpha+2} \|v\|^{\alpha+2}_{L^{\alpha+2}},
	\]
	where
	\begin{align} \label{def-bet}
	\beta:= \frac{d\alpha}{2}.
	\end{align}
	The functional $Q$ comes from the virial action
	\begin{align} \label{virial-action}
	\frac{d^2}{dt^2} \|x u(t)\|^2_{L^2} = 8 Q(u(t)),
	\end{align}
	where $u$ is the solution to the focusing problem \eqref{NLS-att}. Let $\phi \in \mathcal{G}_\omega$. We define the following sets
	\begin{align} \label{inv-set}
	\begin{aligned}
	\mathcal{K}^-_\omega &:= \left\{ v\in H^1 \backslash \{0\} \ : \ \|v\|_{L^2} \leq \|\phi\|_{L^2}, S_\omega(v) < S_\omega(\phi), K_\omega(v)<0, Q(v) <0 \right\}, \\
	\mathcal{K}^+_\omega &:= \left\{ v\in H^1 \backslash \{0\} \ : \ \|v\|_{L^2} \leq \|\phi\|_{L^2}, S_\omega(v) < S_\omega(\phi), K_\omega(v)<0, Q(v) >0 \right\}. \\
	\end{aligned}
	\end{align}
	We will see in Remark $\ref{rem-intersection}$ and Lemma $\ref{lem-large-omega}$ that for $\omega$ large enough,
	\[
	\left\{v\in H^1 \backslash \{0\} \ : \ \|v\|_{L^2} \leq \|\phi\|_{L^2}, S_\omega(v) < S_\omega(\phi), K_\omega(v)<0, Q(v)=0 \right\} = \emptyset,
	\]
	hence
	\[
	\mathcal{K}^-_\omega \cup \mathcal{K}^+_\omega = \left\{v\in H^1 \backslash \{0\} \ : \ \|v\|_{L^2} \leq \|\phi\|_{L^2}, S_\omega(v) < S_\omega(\phi), K_\omega(v)<0 \right\}.
	\]
	We are now able to state our first result concerning the sharp threshold of global existence and blow-up for the focusing problem \eqref{NLS-att} in the mass-supercritical and energy-subcritical case.
	\begin{theorem} \label{theo-sha-threshold}
		Let $d\geq 1$, $0<\sigma<\min\{2,d\}$ and $\frac{4}{d}<\alpha<\frac{4}{d-2}$ if $d\geq 3$ ($\frac{4}{d}<\alpha<\infty$ if $d=1,2$). Then there exists $\omega_0>-\mu_1$ such that for any $\omega \geq \omega_0$ and $\phi \in \mathcal{G}_\omega$, the following properties hold:
		\begin{itemize}
			\item If $u_0 \in \mathcal{K}^-_\omega$ and $|x| u_0 \in L^2$, then the corresponding solution to \eqref{NLS-att} blows up in finite time.
			\item If $u_0 \in \mathcal{K}^+_\omega$, then the corresponding solution to \eqref{NLS-att} exists globally in time.
		\end{itemize}
	\end{theorem}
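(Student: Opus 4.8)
The plan is to run an invariant-set (Payne--Sattinger) argument. Since both the mass $M(u(t))=\|u(t)\|_{L^2}^2$ and the action $S_\omega(u(t))$ are conserved along \eqref{NLS-att}, for any solution issuing from $u_0\in\mathcal{K}^\pm_\omega$ the two constraints $\|u(t)\|_{L^2}\le\|\phi\|_{L^2}$ and $S_\omega(u(t))<S_\omega(\phi)$ automatically hold on the whole existence interval $I$, where $S_\omega(\phi)=d(\omega)$ by Proposition \ref{prop-exi-gro-sta}. I first show $K_\omega(u(t))<0$ is preserved: the map $t\mapsto K_\omega(u(t))$ is continuous on $I$ and $u(t)\neq0$ by mass conservation, so if $K_\omega(u(t_*))=0$ at some $t_*$ then $u(t_*)$ would be admissible in \eqref{d-ome}, forcing $S_\omega(u(t_*))\ge d(\omega)=S_\omega(\phi)$ and contradicting the strict inequality above. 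Granting $K_\omega(u(t))<0$, Lemma \ref{lem-large-omega} (valid for $\omega\ge\omega_0$) ensures $Q(u(t))\neq0$ throughout, so by continuity the sign of $Q(u(t))$ never changes; hence $\mathcal{K}^-_\omega$ and $\mathcal{K}^+_\omega$ are both invariant under the flow.

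For the global existence statement ($u_0\in\mathcal{K}^+_\omega$), I would bound the kinetic energy by forming the combination $S_\omega(v)-\tfrac1\beta Q(v)$, which cancels the focusing term $\|v\|_{L^{\alpha+2}}^{\alpha+2}$ and leaves
\[
S_\omega(v)-\tfrac1\beta Q(v)=\Big(\tfrac12-\tfrac1\beta\Big)\|\nabla v\|_{L^2}^2-\tfrac12\Big(1-\tfrac\sigma\beta\Big)G(v)+\tfrac\omega2\|v\|_{L^2}^2 .
\]
Here $\beta>2$ makes the gradient coefficient positive, while $\sigma<2$ gives the interpolation bound $G(v)\le\varep\|\nabla v\|_{L^2}^2+C_\varep\|v\|_{L^2}^2$; choosing $\varep$ small preserves positivity of the gradient coefficient. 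Since $Q(u(t))>0$ on $\mathcal{K}^+_\omega$ and $S_\omega,M$ are conserved, this yields a time-uniform bound on $\|\nabla u(t)\|_{L^2}$, and the blow-up alternative from the local theory produces a global solution.

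For the blow-up statement ($u_0\in\mathcal{K}^-_\omega$ with $|x|u_0\in L^2$), the virial identity \eqref{virial-action} gives $f''(t)=8Q(u(t))$ for $f(t)=\|xu(t)\|_{L^2}^2\ge0$. The decisive step is to upgrade $Q(u(t))<0$ to a uniform bound $Q(u(t))\le-\delta<0$. Using the mass-preserving rescaling $v_\lambda(x)=\lambda^{d/2}v(\lambda x)$ one has $\partial_\lambda S_\omega(v_\lambda)=\lambda^{-1}Q(v_\lambda)$, and for $v\in\mathcal{K}^-_\omega$ there is $\lambda_0\in(0,1)$ with $K_\omega(v_{\lambda_0})=0$, whence $S_\omega(v_{\lambda_0})\ge d(\omega)$; comparing $S_\omega(v)$ with $S_\omega(v_{\lambda_0})$ and exploiting the ordering $\sigma<2<\beta$ of the scaling exponents should yield a negative upper bound of the form $Q(v)\le C\big(S_\omega(v)-S_\omega(\phi)\big)$. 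As $S_\omega(u(t))=S_\omega(u_0)<S_\omega(\phi)$ is conserved, this makes $f$ concave with $f''\le-8\delta$, so $f$ would reach zero in finite time, which is impossible for a nonnegative quantity; hence the solution blows up in finite time.

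The hardest point is exactly this coercivity estimate on $\mathcal{K}^-_\omega$. In the homogeneous power NLS, $K_\omega$ and $Q$ are linked by a single scaling, but the inverse-power potential contributes a term scaling like $\lambda^\sigma$ alongside the kinetic ($\lambda^2$) and nonlinear ($\lambda^\beta$) terms, so the three exponents no longer align and the relevant combinations become sign-indefinite. Controlling them forces one to use both the mass restriction $\|v\|_{L^2}\le\|\phi\|_{L^2}$ and the largeness of $\omega$ --- the very hypotheses behind Lemma \ref{lem-large-omega} --- in order to fix the shape of $\lambda\mapsto S_\omega(v_\lambda)$ and keep $Q$ bounded away from zero.
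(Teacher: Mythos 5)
Your scaffolding matches the paper's: invariance of $\mathcal{K}^\pm_\omega$ via conservation laws and the variational characterization $d(\omega)=S_\omega(\phi)$, the combination $S_\omega-\tfrac{1}{\beta}Q$ plus Hardy--Young to bound $\|\nabla u(t)\|_{L^2}$ on $\mathcal{K}^+_\omega$, and Glassey's convexity argument driven by a uniform bound $Q(u(t))\le-\delta$ on $\mathcal{K}^-_\omega$. But the decisive step --- the coercivity estimate $Q(v)\le 2\bigl(S_\omega(v)-S_\omega(\phi)\bigr)$ for all $v$ with $\|v\|_{L^2}\le\|\phi\|_{L^2}$, $K_\omega(v)\le 0$, $Q(v)\le 0$ (Proposition \ref{prop-key-est}) --- is exactly what you do not prove. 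You write that comparing $S_\omega(v)$ with $S_\omega(v^{\lambda_0})$ and ``exploiting the ordering $\sigma<2<\beta$ \dots should yield'' the bound, but this is where all the work lies: the paper introduces $f(\lambda)=S_\omega(v^\lambda)-\tfrac{\lambda^2}{2}Q(v)$, reduces $f(\lambda_0)\le f(1)$ to an inequality between $G(v)$ and $\|v\|^{\alpha+2}_{L^{\alpha+2}}$, derives that inequality from the concavity hypothesis $\partial^2_\lambda S_\omega(\phi^\lambda)|_{\lambda=1}\le 0$ together with the Pohozaev identities $K_\omega(\phi)=Q(\phi)=0$, the mass constraint, and the ground-state property of $\phi$, and then closes the argument with a chain of elementary but nontrivial monotonicity checks ($g$, $g_1$, $g_2$) on $(0,1)$. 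None of that is reproduced or replaced in your proposal, so the blow-up half of the theorem is unproved.

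There is also a logical misattribution that makes your invariance argument circular in spirit: you claim that Lemma \ref{lem-large-omega} ``ensures $Q(u(t))\neq 0$ throughout.'' That lemma only says that for $\omega\ge\omega_0$ the ground state satisfies $\partial^2_\lambda S_\omega(\phi^\lambda)|_{\lambda=1}\le 0$; it says nothing directly about $u(t)$. The fact that no $v$ with $\|v\|_{L^2}\le\|\phi\|_{L^2}$, $S_\omega(v)<S_\omega(\phi)$, $K_\omega(v)<0$ can have $Q(v)=0$ (Remark \ref{rem-intersection}) is itself a consequence of the key estimate of Proposition \ref{prop-key-est}: if $Q(v)=0$ then $0=Q(v)\le 2(S_\omega(v)-S_\omega(\phi))<0$, a contradiction. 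So both the preservation of the sign of $Q$ and the uniform negativity $Q(u(t))\le -\delta$ hinge on the same missing estimate; without it, neither half of your invariance claim for $Q$ nor the blow-up conclusion goes through. The global-existence half, by contrast, is essentially complete and agrees with the paper's proof (note $Q(u(t))>0$ is only used through $-\tfrac{1}{\beta}Q(u(t))<0$, so it survives once invariance of $\mathcal{K}^+_\omega$ is established --- but that invariance again needs Remark \ref{rem-intersection}).
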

	
	The proof of finite time blow-up given in Theorem $\ref{theo-sha-threshold}$ is based on the variational argument of  \cite{FO}. The key point (see Proposition $\ref{prop-key-est}$) is to show for $\omega>-\mu_1$ large enough and $v \in H^1\backslash \{0\}$ satisfying 
	\[
	\|v\|_{L^2} \leq \|\phi\|_{L^2}, \quad K_\omega(v) \leq 0, \quad Q(v) \leq 0,
	\]
	it holds that
	\begin{align} \label{estimate-Q}
	Q(v) \leq 2(S_\omega(v) - S_\omega(\phi)).
	\end{align}
	The finite time blow-up then follows from \eqref{virial-action}, \eqref{estimate-Q} and a classical convexity argument of Glassey \cite{Glassey}. We refer the reader to Section $\ref{S3}$ for more details.
	
	\subsection{Existence and stability of standing waves}
	The second part of this paper is devoted to the existence and stability of standing waves for the focusing problem \eqref{NLS-att} in the mass-subcritical and mass-critical cases. Given $a>0$, we consider the minimizing problem
	\begin{align} \label{min-prob-a}
	I(a):= \inf \left\{ E(v) \ : \ v \in H^1, \|v\|^2_{L^2}=a\right\},
	\end{align}
	where
	\[
	E(v):= \frac{1}{2} \|\nabla v\|^2_{L^2} - \frac{1}{2} G(v) - \frac{1}{\alpha+2} \|v\|^{\alpha+2}_{L^{\alpha+2}}.
	\]
	We denote the set of minimizers for $I(a)$ by
	\[
	\mathcal{N}(a) := \left\{v \in H^1 \ : \ E(v) = I(a), \|v\|^2_{L^2}=a \right\}.
	\]
	By the Lagrange multiplier theorem, for each $v \in \mathcal{N}(a)$, there exists $\omega \in \R$ such that \eqref{ell-equ-att} holds with $v$ in place of $\phi$. In this case, $e^{i\omega t} v(x)$ is a solution to \eqref{NLS-att} with initial data $v$. One usually calls $e^{i\omega t} v$ the orbit of $v$. Moreover, if $v \in \mathcal{N}(a)$, i.e. $v$ is a minimizer for $I(a)$, then $e^{i\omega t} v$ is also a minimizer for $I(a)$ or $e^{i\omega t} v \in \mathcal{N}(a)$. We are also interested in the orbital stability for $\mathcal{N}(a)$ under the flow of the focusing problem \eqref{NLS-att}. 
	\begin{definition} 
		The set $\mathcal{N}(a)$ is called orbitally stable under the flow of the focusing problem \eqref{NLS-att} if for every $\varep>0$, there exists $\delta>0$ such that for any initial data $u_0 \in H^1$ satisfying 
		\[
		\inf_{v \in \mathcal{N}(a)} \|u_0 -v\|_{H^1} <\delta,
		\]
		the corresponding solution $u$ to \eqref{NLS-att} satisfies
		\[
		\inf_{v\in \mathcal{N}(a)} \|u(t)-v\|_{H^1} <\varep
		\]
		for all $t \in \R$. 
	\end{definition} 

	Note that the above definition of orbital stability implicitly requires that \eqref{NLS-att} has a unique global solution at least for initial data $u_0$ sufficiently close to $\mathcal{N}(a)$. 
	
	\begin{remark} 
		In the case of no potential and focusing mass-subcritical nonlinearity (i.e. $d\alpha<4$), by using the scaling technique, we can show that each $v \in \mathcal{N}(a)$ is actually a ground state for 
		\begin{align} \label{ell-equ}
		-\Delta \psi +\omega \psi - |\psi|^\alpha \psi =0,
		\end{align}
		where $\omega$ is the Lagrange multiplier, that is, $v$ minimizes the action functional $S_\omega$ over all solutions of \eqref{ell-equ}. In fact, we will show that
		\begin{align} \label{gro-sta}
		S_\omega(v) \leq S_\omega(\psi)
		\end{align}
		for any solution $\psi$ of \eqref{ell-equ}, where
		\[
		S_\omega(v)= \frac{1}{2} \|\nabla v\|^2_{L^2} + \frac{\omega}{2} \|v\|^2_{L^2} -\frac{1}{\alpha+2} \|v\|^{\alpha+2}_{L^{\alpha+2}}.
		\] 
		Assume by contradiction that there exists a solution $\psi$ to \eqref{ell-equ} such that $S_\omega(\psi) <S_\omega(v)$. Since $\psi$ is a solution of \eqref{ell-equ}, we have the following Pohozaev identities
		\[
		\|\nabla \psi\|^2_{L^2} + \omega \|\psi\|^2_{L^2} - \|\psi\|^{\alpha+2}_{L^{\alpha+2}} =0, \quad \frac{2-d}{2} \|\nabla \psi\|^2_{L^2} -\frac{d\omega}{2} \|\psi\|^2_{L^2} +\frac{d}{\alpha+2} \|\psi\|^{\alpha+2}_{L^{\alpha+2}} =0.
		\]
		Of course, similar identities hold for $v$ as well. From these identities, we infer that
		\[
		\|\psi\|^{\alpha+2}_{L^{\alpha+2}} = \frac{2(\alpha+2)}{\alpha}S_\omega(\psi), \quad \|\nabla \psi\|^2_{L^2} = dS_\omega(\psi), \quad \omega \|\psi\|^2_{L^2} = \frac{4-(d-2)\alpha}{\alpha} S_\omega(\psi),
		\]
		and $E(\psi) = \frac{(d\alpha-4)\omega}{2(4-(d-2)\alpha)} \|\psi\|^2_{L^2}$. Now set
		\[
		\lambda:= \left(\frac{a}{\|\psi\|^2_{L^2}} \right)^{\frac{\alpha}{4-d\alpha}}
		\]
		and define
		\[
		\psi_\lambda(x):= \lambda^{\frac{2}{\alpha}} \psi(\lambda x).
		\]
		We see that
		\[
		\|\psi_\lambda\|^2_{L^2} = \lambda^{\frac{4-d\alpha}{\alpha}} \|\psi\|^2_{L^2} = a.
		\]
		Since $v \in \mathcal{N}(a)$, we have
		\[
		\frac{(d\alpha-4)\omega}{2(4-(d-2)\alpha)} a = E(v) \leq E(\psi_\lambda) = \lambda^{\frac{4-(d-2)\alpha}{\alpha}} E(\psi) = \lambda^{\frac{4-(d-2)\alpha}{\alpha}} \frac{(d\alpha-4)\omega}{2(4-(d-2)\alpha)} \|\psi\|^2_{L^2}.
		\]
		Since $d\alpha<4$, it follows that $a \geq \lambda^{\frac{4-(d-2)\alpha}{\alpha}} \|\psi\|^2_{L^2}$, hence 
		\[
		\lambda^{\frac{4-(d-2)\alpha}{\alpha}} \leq \frac{a}{\|\psi\|^2_{L^2}} = \lambda^{\frac{4-d\alpha}{\alpha}} \text{ or } \lambda \leq 1.
		\]
		On the other hand,
		\[
		\frac{\alpha}{4-(d-2)\alpha} \omega \|\psi\|^2_{L^2} = S_\omega(\psi) <S_\omega(v) = \frac{\alpha}{4-(d-2)\alpha} \omega a,
		\]
		hence $\|\psi\|^2_{L^2} <a$ or $\lambda>1$ which is a contradiction. Thus, one gets \eqref{gro-sta} and the claim follows. In the presence of inverse-power potential, there is no scaling invariance for \eqref{NLS-att}, so it is not clear whether or not each $v\in \mathcal{N}(a)$ is a ground state for \eqref{ell-equ-att}.
	\end{remark}
	
	Recently, Li-Zhao \cite{Li-Zhao} studied the existence of standing waves and the orbital stability for $\mathcal{N}(a)$ in the mass-subcritical and mass-critical cases. Their proof is based on the concentration-compactness principle of P. L. Lions \cite{Lions1}. Our purpose here is to give a direct simple proof for the result of \cite{Li-Zhao}.
	
	In the mass-subcritical case, i.e. $\alpha<\frac{4}{d}$, the energy functional is bounded from below on 
	\[
	S(a):=\{ v \in H^1 \ : \ \|v\|^2_{L^2} =a\}.
	\]
	Thus for every $a>0$, we can find the global minimizer of the energy functional on $S(a)$. More precisely, we have the following result.
	\begin{theorem} \label{theo-exi-min-mas-sub}
		Let $d\geq 1$, $0<\sigma<\min\{2,d\}$, $0<\alpha<\frac{4}{d}$ and $a>0$. Then, it holds that:
		\begin{itemize}
			\item The set $\mathcal{N}(a)$ is not empty.
			\item If $v \in \mathcal{N}(a)$, then there exists a positive radially symmetric function $\phi \in H^1$ such that $v(x) = e^{i\theta} \phi(x)$ for some $\theta \in \R$.
			\item The set $\mathcal{N}(a)$ is orbitally stable under the flow of the focusing problem \eqref{NLS-att}.
		\end{itemize}
	\end{theorem}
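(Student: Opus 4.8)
The plan is to treat the three assertions in turn, the heart of the matter being a compactness property of minimizing sequences for $I(a)$ that replaces the concentration--compactness machinery of \cite{Li-Zhao}. First I would record the two elementary facts $-\infty<I(a)<0$. The lower bound follows from the Gagliardo--Nirenberg inequality together with the eigenvalue estimate \eqref{eig-est}: since $\alpha<\tfrac{4}{d}$ the power $\tfrac{d\alpha}{2}$ on $\|\nabla v\|_{L^2}$ is strictly less than $2$, so the gradient term dominates both $G(v)$ and $\|v\|_{L^{\alpha+2}}^{\alpha+2}$ and $E$ is coercive on $S(a)$. The strict negativity $I(a)<0$ comes from the $L^2$-preserving scaling $v_\lambda(x)=\lambda^{d/2}v(\lambda x)$, under which $E(v_\lambda)=\tfrac12\lambda^2\|\nabla v\|_{L^2}^2-\tfrac12\lambda^\sigma G(v)-\tfrac{1}{\alpha+2}\lambda^{d\alpha/2}\|v\|_{L^{\alpha+2}}^{\alpha+2}$; as $\lambda\to0$ the two negative terms, of orders $\lambda^\sigma$ and $\lambda^{d\alpha/2}$, dominate the $\lambda^2$ gradient term because $\sigma<2$ and $\tfrac{d\alpha}{2}<2$, so $E(v_\lambda)<0$ for small $\lambda$.

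The key existence step is a direct compactness argument. A minimizing sequence $\{v_n\}$ is bounded in $H^1$ by coercivity, and I would replace each $v_n$ by its Schwarz symmetrization, which leaves $\|v_n\|_{L^2}$ and $\|v_n\|_{L^{\alpha+2}}$ unchanged, does not increase $\|\nabla v_n\|_{L^2}$ (Pólya--Szegő), and does not decrease $G(v_n)$ because the weight $|x|^{-\sigma}$ is itself symmetric decreasing (Hardy--Littlewood); hence $E$ does not increase and we may assume each $v_n$ is radial and decreasing. Then $v_n\rightharpoonup v$ in $H^1$ and Rellich gives strong local $L^2$ convergence. The tail of $G$ is harmless since $\int_{|x|>R}|x|^{-\sigma}|v_n|^2\le R^{-\sigma}a$, and a Hölder estimate (using $\sigma<\min\{2,d\}$) then yields $G(v_n)\to G(v)$. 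The pointwise bound $|v_n(x)|\le Ca^{1/2}|x|^{-d/2}$, valid for symmetric decreasing functions, makes the $L^{\alpha+2}$ tail uniformly small, so $\|v_n\|_{L^{\alpha+2}}\to\|v\|_{L^{\alpha+2}}$ in every dimension $d\ge1$. Lower semicontinuity of the gradient gives $E(v)\le I(a)<0$, so $v\neq0$; writing $b=\|v\|_{L^2}^2\in(0,a]$ and testing with $\tilde v=\sqrt{a/b}\,v\in S(a)$, the fact that $\tfrac{\alpha+2}{2}>1$ and $a/b\ge1$ produces the chain $I(a)\le E(\tilde v)\le\tfrac{a}{b}E(v)\le E(v)\le I(a)$, whose saturation forces $b=a$. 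Thus no mass is lost, $v\in\mathcal{N}(a)$, and $v_n\to v$ strongly in $H^1$.

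For the second assertion I would note that if $v\in\mathcal{N}(a)$ then so is $|v|$, since the modulus leaves every term of $E$ unchanged except the gradient, which it does not increase (diamagnetic inequality). Equality in $E(|v|)=E(v)=I(a)$ forces equality in the diamagnetic inequality, giving $v=e^{i\theta}|v|$ for a constant phase $\theta$, while equality in the Hardy--Littlewood inequality against the strictly radially decreasing weight $|x|^{-\sigma}$ forces $|v|$ to be symmetric decreasing about the origin. The Lagrange multiplier theorem shows $\phi=|v|$ solves \eqref{ell-equ-att} for some $\omega\in\R$, and the strong maximum principle (after elliptic regularity) yields $\phi>0$, as claimed.

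Finally, orbital stability follows from compactness of minimizing sequences via the Cazenave--Lions contradiction argument, and the delicate point is that the solution of \eqref{NLS-att} cannot be symmetrized, so I must establish compactness for \emph{arbitrary} minimizing sequences. For this I would prove the strict binding inequalities $I(a)<I_0(a)$ and $I(a)<I(b)+I_0(a-b)$ for $0<b<a$, where $I_0$ is the potential-free minimum: the first holds because inserting the ground-state soliton centered at the origin gains the strictly negative term $-\tfrac12 G<0$, and the second follows from the homogeneity $I_0(s)=-Ks^{\gamma}$ with $\gamma=\tfrac{4-(d-2)\alpha}{4-d\alpha}>1$. A Brezis--Lieb splitting of $\|\cdot\|_{L^2}^2$, $\|\nabla\cdot\|_{L^2}^2$ and $\|\cdot\|_{L^{\alpha+2}}^{\alpha+2}$, combined with $G(u_n)\to G(u)$ as above (so the potential is inactive on the escaping mass), gives $I(a)\ge I(b)+I_0(a-b)$ for the split masses; the binding inequalities then force $b=a$ and hence strong $H^1$ convergence. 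If stability failed, there would be data $u_{0,n}\to\mathcal{N}(a)$ whose solutions leave an $\varep$-neighborhood of $\mathcal{N}(a)$ at times $t_n$; by conservation of mass and energy, $\{u_n(t_n)\}$ is a minimizing sequence for $I(a)$ (global existence being ensured by Theorem \ref{theo-gwp-1}), hence converges to $\mathcal{N}(a)$, a contradiction. I expect this binding step for non-symmetric sequences to be the main obstacle; the remaining ingredients are the coercivity/scaling bookkeeping and the rearrangement identities above.
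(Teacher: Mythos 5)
Your proposal is correct, and on the first two assertions it follows the paper's proof essentially verbatim: the same coercivity bound and small-$\lambda$ scaling give $-\infty<I(a)<0$; the same symmetrization step (P\'olya--Szeg\H{o} plus Hardy--Littlewood) reduces matters to radial decreasing minimizing sequences, to which the compact embedding \eqref{com-emb} (valid also for $d=1$ thanks to \eqref{rad-decrea-est}) applies; your saturation chain $I(a)\le E(\tilde v)\le\frac{a}{b}E(v)\le E(v)\le I(a)$ is a streamlined version of the paper's splitting $v_n=v+r_n$ with the rescalings $\tilde v=\lambda v$, $\tilde r_n=\lambda_n r_n$; and the characterization of minimizers (diamagnetic equality, the strict rearrangement inequality of \cite{Hajaiej}, elliptic regularity and the maximum principle) is exactly the paper's Step 3. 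Where you genuinely diverge is the stability step, and your instinct about the delicate point is right: the paper handles the compactness of $(u_n(t_n))_{n\geq 1}$ with the phrase ``by the same argument as in Step 2'', but Step 2's compactness rests on replacing the minimizing sequence by its rearrangement, an operation that cannot be applied to $u_n(t_n)$ if one wants the \emph{original} sequence to converge. Your binding inequalities $I(a)<I_0(a)$ and $I(a)<I(b)+I_0(a-b)$, combined with a Brezis--Lieb splitting and the weak continuity of $G$, supply exactly what is missing, namely precompactness of \emph{arbitrary} minimizing sequences (the decaying potential pinning the mass at the origin and excluding runaway translations); the price is that you reintroduce a piece of the concentration--compactness machinery of \cite{Li-Zhao} that the paper's ``direct simple proof'' was meant to bypass.

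Two technical remarks on your sketch. First, the homogeneity $I_0(s)=-Ks^{\gamma}$ with $\gamma>1$ does not by itself yield $I(a)<I(b)+I_0(a-b)$: you must combine it with the monotonicity $I(b)\ge\frac{b}{a}I(a)$ (proved in the paper's Section 5, Step 3, by the very scaling you use) and with the strict inequality $I(a)<I_0(a)$; then $I_0(a-b)=(a-b)^{\gamma}I_0(1)>\frac{a-b}{a}I_0(a)>\frac{a-b}{a}I(a)$, and adding $I(b)\ge\frac{b}{a}I(a)$ gives the claim. Second, your program can be shortened: the paper's rescaling argument in Step 2 (everything after the expansions \eqref{L2-expan}--\eqref{G-expan}) rules out partial mass loss $0<\|v\|_{L^2}^2<a$ with no symmetry assumption at all, since it only uses Brezis--Lieb expansions, the weak continuity of $G$, and $E(\tilde r_n)\ge I(a)$; symmetry enters only in excluding the zero weak limit. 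Hence the minimal repair of the paper's stability step is the single inequality $I(a)<I_0(a)$, which your soliton-insertion argument provides, rather than the full family of binding inequalities.
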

	
	In the mass-critical case, i.e. $\alpha=\frac{4}{d}$, under an appropriate assumption on $a$, the energy functional is bounded from below on $S(a)$. We have the following existence and stability of standing waves.
	\begin{theorem} \label{theo-exi-gro-mas-cri}
		Let $d\geq 1$, $0<\sigma<\min\{2,d\}$, $\alpha=\frac{4}{d}$ and $0<a<a^*:=\|Q\|^2_{L^2}$, where $Q$ is the unique (up to symmetries) positive radial solution to \eqref{ell-equ-mas-cri}. Then, it holds that:
		\begin{itemize}
			\item The set $\mathcal{N}(a)$ is not empty.
			\item If $v \in \mathcal{N}(a)$, then there exists a positive radially symmetric function $\phi \in H^1$ such that $v(x) = e^{i\theta} \phi(x)$ for some $\theta \in \R$.
			\item The set $\mathcal{N}(a)$ is orbitally stable under the flow of the focusing problem \eqref{NLS-att}.
		\end{itemize}
	\end{theorem}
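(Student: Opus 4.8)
The plan is to reduce both the existence of minimizers and their orbital stability to a single compactness statement: \emph{every minimizing sequence for $I(a)$ has a subsequence converging strongly in $H^1$ to an element of $\mathcal{N}(a)$}. The structural description of minimizers then follows exactly as in the mass-subcritical case (Theorem~\ref{theo-exi-min-mas-sub}), via the diamagnetic and symmetric-decreasing rearrangement inequalities together with the strong maximum principle, so I will concentrate on the compactness.

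First I would establish that $E$ is coercive on $S(a)$. By the sharp Gagliardo--Nirenberg inequality, whose optimal constant is attained by $Q$, one has for $\alpha=\frac4d$ and $v\in S(a)$
\begin{equation*}
\frac{1}{\alpha+2}\|v\|_{L^{\alpha+2}}^{\alpha+2}\le \frac12\Big(\frac{a}{a^*}\Big)^{2/d}\|\nabla v\|_{L^2}^2,
\end{equation*}
while the interpolation bound $G(v)\le C\|\nabla v\|_{L^2}^{\sigma}\|v\|_{L^2}^{2-\sigma}$ (valid for $0<\sigma<\min\{2,d\}$) together with Young's inequality absorbs the potential term into a small multiple of $\|\nabla v\|_{L^2}^2$. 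Since $a<a^*$, this yields $E(v)\ge c\|\nabla v\|_{L^2}^2-C$ for some $c>0$, so $I(a)>-\infty$ and every minimizing sequence is bounded in $H^1$. A scaling test function $v_\lambda=\lambda^{d/2}v(\lambda\cdot)$ with $\lambda\to0$ shows, using $\sigma<2$ so that the attractive term $-\frac12\lambda^\sigma G(v)$ dominates the $O(\lambda^2)$ contributions, that $I(a)<0$ for every $a>0$.

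The heart of the argument is a strict binding inequality: $I(a)<I(b)$ whenever $0<b<a$. For $v\in S(b)$ I would test $I(a)$ with $\sqrt{a/b}\,v\in S(a)$; the homogeneities $\theta^2$ of the quadratic terms and $\theta^{\alpha+2}$ of the nonlinear term give $I(a)\le \tfrac{a}{b}\,I(b)$, and since $I(b)<0$ and $a/b>1$ this is strictly less than $I(b)$. With this in hand, I take a bounded minimizing sequence $v_n\rightharpoonup\phi$ in $H^1$. The functional $G$ is weakly continuous: the local part is handled by Rellich compactness (with $|x|^{-\sigma}\in L^{(p/2)'}_{\mathrm{loc}}$ for $p<2^*$ when $\sigma<\min\{2,d\}$) and the tail by $\int_{|x|>R}|x|^{-\sigma}|v_n|^2\le R^{-\sigma}a$, uniformly in $n$; hence $G(v_n)\to G(\phi)$ and $G(v_n-\phi)\to0$. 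Writing $w_n=v_n-\phi$ and applying the Brezis--Lieb lemma to the $L^2$, $L^{\alpha+2}$ and gradient terms, I obtain $E(v_n)=E(\phi)+E^\infty(w_n)+o(1)$, where $E^\infty$ is the potential-free energy. Since $\|w_n\|_{L^2}^2\to a-\|\phi\|_{L^2}^2\le a<a^*$, the sharp Gagliardo--Nirenberg inequality forces $\liminf_n E^\infty(w_n)\ge0$, so $E(\phi)\le I(a)$. Ruling out $\phi=0$ by $I(a)<0$ and invoking $E(\phi)\ge I(\|\phi\|_{L^2}^2)$, the binding inequality forces $\|\phi\|_{L^2}^2=a$; feeding this back gives $E^\infty(w_n)\to0$ with $\|w_n\|_{L^2}\to0$, whence Gagliardo--Nirenberg yields $\|\nabla w_n\|_{L^2}\to0$ and thus $v_n\to\phi$ strongly in $H^1$ with $\phi\in\mathcal{N}(a)$.

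The main obstacle is precisely this binding inequality together with the accompanying exclusion of mass escaping to spatial infinity; it is what replaces the concentration-compactness machinery of \cite{Li-Zhao}, and it is only here that the constraint $a<a^*$ is essential (through both the coercivity and the sign $\liminf E^\infty(w_n)\ge0$). Finally, orbital stability follows by the standard contradiction argument: if it failed there would be data $u_{0,n}$ with $\inf_{v\in\mathcal{N}(a)}\|u_{0,n}-v\|_{H^1}\to0$ and times $t_n$ with $\inf_{v\in\mathcal{N}(a)}\|u_n(t_n)-v\|_{H^1}\ge\varep_0$. Since $a<a^*=\|Q\|_{L^2}^2$, Theorem~\ref{theo-gwp-1} guarantees these solutions are global, and conservation of mass and energy makes $u_n(t_n)$ (after rescaling its mass to exactly $a$) a minimizing sequence for $I(a)$; the compactness statement then produces a subsequence converging to $\mathcal{N}(a)$, contradicting the lower bound $\varep_0$.
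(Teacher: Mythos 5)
Your proof is correct, and its compactness core takes a genuinely different route from the paper's. The paper treats the mass-critical case as a one-line modification of the mass-subcritical Theorem \ref{theo-exi-min-mas-sub}: replace the Gagliardo--Nirenberg/Young bound \eqref{gag-nir-ine-app} by the sharp inequality \eqref{gag-nir-ine-app-mas-cri} (this is where $a<a^*$ enters the coercivity), then symmetrize the minimizing sequence (Polya--Szeg\"o and Hardy--Littlewood can only lower $E$), use the compact embedding $H^1_{\text{rd}}\hookrightarrow L^q$ of \eqref{com-emb} to get strong $L^q$ convergence, and prevent loss of mass by the rescaling trick $\tilde v=\lambda v$, $\tilde r_n=\lambda_n r_n$. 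You never symmetrize: you combine weak continuity of $G$ (Rellich locally plus the uniform tail bound $R^{-\sigma}a$), a Brezis--Lieb splitting $E(v_n)=E(\phi)+E^\infty(w_n)+o(1)$, the sign $\liminf_n E^\infty(w_n)\ge 0$ from the sharp Gagliardo--Nirenberg inequality (using $\lim_n\|w_n\|^2_{L^2}\le a<a^*$), and the strict subadditivity $I(a)\le\frac{a}{b}I(b)<I(b)$ for $0<b<a$ --- which is, incidentally, the very computation the paper performs in Step 3 of the proof of Theorem \ref{theo-blo-up-behavior} to show that $I(a)/a$ is decreasing, while the weak continuity of $G$ is what the paper itself invokes from \cite[Theorem 11.4]{LL} elsewhere. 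The trade-off is this: the paper's argument is more elementary once \eqref{com-emb} is granted, but as written it yields precompactness only of radially decreasing minimizing sequences, whereas its stability Step 4 applies ``the same argument'' to $u_n(t_n)$, which has no reason to be radial; your argument proves precompactness of \emph{arbitrary} minimizing sequences, which is exactly the statement the stability contradiction needs, so on this point your proof is the more self-contained one (essentially a streamlined version of the concentration-compactness approach of \cite{Li-Zhao}, with the potential term excluding vanishing via $I(a)<0$ and dichotomy via strict binding). Two minor touch-ups: the requirement $|x|^{-\sigma}\in L^{(p/2)'}_{\mathrm{loc}}$ amounts to $\sigma\,(p/2)'<d$, which holds for \emph{some} $p<2^*$ close to $2^*$ (not for every $p<2^*$); and in the stability step one should note that coercivity at masses converging to $a<a^*$ keeps $(u_n(t_n))_n$ bounded in $H^1$, so rescaling its mass to exactly $a$ indeed produces a minimizing sequence for $I(a)$.
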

	
	The proofs of Theorems $\ref{theo-exi-min-mas-sub}$ and $\ref{theo-exi-gro-mas-cri}$ are based on variational arguments using the radial compactness embedding. If we denote
	\[
	H^1_{\text{r}}:= \{ v\in H^1 \ : \ v \text{ is radially symmetric} \},
	\]
	then it is well-known that the embedding $H^1_{\text{r}} \hookrightarrow L^q$ is compact for any $2<q<\frac{2d}{d-2}$ if $d\geq 3$ ($2<q<\infty$ if $d=2$). Note that this compact embedding only holds in dimensions $d\geq 2$. The reason is that the inequality
	\[
	|v(x)| \leq C|x|^{\frac{1-d}{2}} \|v\|_{H^1}, \quad v\in H^1_{\text{r}}
	\]
	gives no decay in the case $d=1$. However, if $v$ is in addition radially decreasing, then it holds (see e.g. \cite[Appendix]{BL}) that 
	\begin{align} \label{rad-decrea-est}
	|v(x)| \leq \left( \frac{d}{|\Sb^{d-1}|}\right)^{\frac{1}{2}} |x|^{-\frac{d}{2}} \|v\|_{L^2}.
	\end{align}
	The above inequality yields the compact embedding
	\begin{align} \label{com-emb}
	H^1_{\text{rd}} \hookrightarrow L^q, \quad 2<q<\frac{2d}{d-2} \text{ if } d\geq 3 \ ( 2<q<\infty \text{ if } d=1,2),
	\end{align}
	where
	\[
	H^1_{\text{rd}}:= \{ v \in H^1_{\text{r}} \ : \ v \text{ is radially decreasing}\}.
	\]
	For the reader's convenience, we give the proof of \eqref{com-emb} in the Appendix. 
	
	\subsection{Blow-up behavior of standing waves}
	We next study the blow-up behavior of standing waves as the mass tends to a critical value in the mass-critical case. To our knowledge, the first paper addressed the blow-up behavior of standing waves in the mass-critical case belongs to Guo-Seiringer \cite{GS}. They studied the behavior of minimizers for 
	\[
	\min \{E_b(v) \ : \ v \in H^1, \|v\|_{L^2} =1\},
	\]
	where
	\[
	E_b(v) := \int_{\R^2} |\nabla v(x)|^2 dx + \int_{\R^2} V(x) |v(x)|^2 dx - \frac{b}{2} \int_{\R^2} |v(x)|^4 dx
	\]
	and $V$ is a trapping potential which has finite isolated minima. This result has been extended to ring-shaped trapping potentials in \cite{GZZ}, to periodic potentials in \cite{WZ} and to attractive potential vanishing at infinity in \cite{Phan}. In this paper, we have the following result.
	
	\begin{theorem} \label{theo-blo-up-behavior}
		Let $d\geq 1$, $0<\sigma<\min\{2,d\}$, $\alpha=\frac{4}{d}$ and $a^*=\|Q\|^2_{L^2}$, where $Q$ is the unique (up to symmetries) positive radial solution to \eqref{ell-equ-mas-cri}. Then, it holds that:
		\begin{itemize}
			\item If $a\geq a^*$, then there is no minimizer for $I(a)$.
			\item If $v_a$ is a non-negative minimizer for $I(a)$ with $0<a <a^*$, then $v_a$ blows up as $a \nearrow a^*$ in the sense that
			\begin{align} \label{blowup}
			\lim_{a \nearrow a^*} \|\nabla v_a\|_{L^2} = \infty.
			\end{align}
			Moreover, 
			\[
			\beta_a^{\frac{d}{2(2-\sigma)}} v_a\left( \beta_a^{\frac{1}{2-\sigma}} \cdot \right) \rightarrow \lambda_0^{\frac{d}{2}} Q(\lambda_0 \cdot) \text{ strongly in } H^1 \text{ as } a \nearrow a^*,
			\]
			where
			\begin{align} \label{def-bet-a}
			\beta_a:= 1-\left(\frac{a}{a^*}\right)^{\frac{2}{d}}, \quad \lambda_0 := \left( \frac{\sigma G(Q_0)}{d}\right)^{\frac{1}{2-\sigma}}, \quad Q_0 = \frac{Q}{\|Q\|_{L^2}}.
			\end{align}
		\end{itemize}
	\end{theorem}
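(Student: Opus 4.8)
The decisive tool is the sharp mass-critical Gagliardo--Nirenberg inequality, which in the normalization of \eqref{ell-equ-mas-cri} reads $\frac{1}{\alpha+2}\|v\|^{\alpha+2}_{L^{\alpha+2}}\le\frac12(\|v\|^2_{L^2}/a^*)^{2/d}\|\nabla v\|^2_{L^2}$, with equality exactly on the scaling family $\{\lambda^{d/2}Q(\lambda\cdot)\}$. Writing $D(w):=\frac12\|\nabla w\|^2_{L^2}-\frac{1}{\alpha+2}\|w\|^{\alpha+2}_{L^{\alpha+2}}$, this gives the deficit bound $D(w)\ge\frac{\beta_a}{2}\|\nabla w\|^2_{L^2}$ whenever $\|w\|^2_{L^2}=a$, while the scaling consistency $G(\lambda^{d/2}v(\lambda\cdot))=\lambda^\sigma G(v)$ yields the interpolation bound $G(v)\le C\|v\|^{2-\sigma}_{L^2}\|\nabla v\|^\sigma_{L^2}$. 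For nonexistence when $a\ge a^*$, the plan is to test $E$ on the dilations $w_\tau(x)=\tau^{d/2}w(\tau x)$ of a fixed $w$ with $\|w\|^2_{L^2}=a$: since $d\alpha/2=2$, one gets $E(w_\tau)=\frac{\tau^2}{2}\big(\|\nabla w\|^2_{L^2}-\frac{2}{\alpha+2}\|w\|^{\alpha+2}_{L^{\alpha+2}}\big)-\frac{\tau^\sigma}{2}G(w)$, and choosing $w$ a multiple of $Q$ of mass $a$ makes the bracket vanish for $a=a^*$ and turn negative for $a>a^*$; as $\sigma<2$, letting $\tau\to\infty$ forces $E(w_\tau)\to-\infty$, so $I(a)=-\infty$ and no minimizer exists.

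Next I fix a non-negative (hence, by Theorem \ref{theo-exi-gro-mas-cri}, positive radial) minimizer $v_a$ for $0<a<a^*$. The deficit bound gives $I(a)=E(v_a)\ge\frac{\beta_a}{2}\|\nabla v_a\|^2_{L^2}-\frac12 G(v_a)$, and optimizing the trial computation above over $\tau$ (which produces precisely the scale $\tau\sim\beta_a^{-1/(2-\sigma)}\lambda_0$) yields the matching upper bound, so that $I(a)\asymp-\beta_a^{-\sigma/(2-\sigma)}$. Feeding the interpolation bound for $G$ into the lower bound and analyzing $t\mapsto\frac{\beta_a}{2}t^2-\frac{C}{2}a^{(2-\sigma)/2}t^\sigma$ then pins the gradient to $\|\nabla v_a\|_{L^2}\asymp\beta_a^{-1/(2-\sigma)}$. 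In particular $I(a)\to-\infty$, forcing $G(v_a)\to\infty$ and hence $\|\nabla v_a\|_{L^2}\to\infty$, which proves \eqref{blowup}.

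For the profile, I set $w_a(x):=\beta_a^{d/(2(2-\sigma))}v_a(\beta_a^{1/(2-\sigma)}x)$; this is the $L^2$-isometric dilation with scale $\epsilon_a:=\beta_a^{1/(2-\sigma)}$ chosen so that, on multiplying $E(v_a)$ by $\epsilon_a^2$ and using $\epsilon_a^{2-\sigma}=\beta_a$, the problem becomes
\[
\epsilon_a^2 I(a)=\mathcal{E}_a(w_a),\qquad \mathcal{E}_a(w):=D(w)-\tfrac{\beta_a}{2}G(w),\qquad \|w_a\|^2_{L^2}=a .
\]
The gradient bounds give $\|\nabla w_a\|_{L^2}\asymp1$, so $\{w_a\}$ is bounded and radial in $H^1$; since $\epsilon_a^2 I(a)=O(\beta_a)$ and $\beta_a G(w_a)=O(\beta_a)$, one obtains $D(w_a)\to0$, i.e. $w_a$ asymptotically saturates Gagliardo--Nirenberg. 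Passing to a subsequence, $w_a\rightharpoonup U$ in $H^1$; the radial compactness \eqref{com-emb} gives $w_a\to U$ strongly in $L^{\alpha+2}$, and splitting $|x|\lessgtr R$ (using $\sigma<2$ together with the tail bound $\int_{|x|>R}|x|^{-\sigma}|w_a|^2\le R^{-\sigma}a$) gives $G(w_a)\to G(U)$. Expanding $D(w_a)$ via the strong $L^{\alpha+2}$ limit and the weak $H^1$ limit then forces simultaneously $D(U)=0$ and $\nabla w_a\to\nabla U$ strongly in $L^2$; thus $U\neq0$ is an optimizer, so $U=c\,\lambda^{d/2}Q(\lambda\cdot)$ with $c^2a^*=\|U\|^2_{L^2}\le a^*$, whence $c\le1$.

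It remains to identify $c$ and $\lambda$. Introduce the reduced functional $h(\lambda):=\lambda^2\|\nabla Q\|^2_{L^2}-\lambda^\sigma G(Q)$; the Pohozaev relations for \eqref{ell-equ-mas-cri} (giving $\|\nabla Q\|^2_{L^2}=\frac{d}{2}\|Q\|^2_{L^2}$ and $\frac{2}{\alpha+2}\|Q\|^{\alpha+2}_{L^{\alpha+2}}=\|\nabla Q\|^2_{L^2}$) show that $h$ has a unique critical point, the strict minimizer $\lambda_0$ of \eqref{def-bet-a}, with $h(\lambda_0)<0$. Testing $\mathcal E_a$ on $\sqrt{a/a^*}\,\lambda_0^{d/2}Q(\lambda_0\cdot)$ gives the exact upper bound $\mathcal E_a(w_a)/\beta_a\le\frac{a}{2a^*}h(\lambda_0)\to\frac12 h(\lambda_0)$, while the deficit bound gives $\mathcal E_a(w_a)/\beta_a\ge\frac12\|\nabla w_a\|^2_{L^2}-\frac12 G(w_a)\to\frac{c^2}{2}h(\lambda)$. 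Hence $c^2h(\lambda)\le h(\lambda_0)=\min h<0$; since $h(\lambda)\le0$ is bounded below by $\min h$, dividing by the negative quantity $h(\lambda)$ forces $c^2\ge1$, so $c=1$, no $L^2$-mass is lost, and $w_a\to U$ strongly in $H^1$; the equality $h(\lambda)=\min h$ then gives $\lambda=\lambda_0$, so $w_a\to\lambda_0^{d/2}Q(\lambda_0\cdot)$ along the full family, as claimed. The main obstacle is exactly this last step: the limiting functional $D$ is scale-invariant on the optimizer manifold, so $D(w_a)\to0$ alone can neither fix the scale of $U$ nor exclude escape of $L^2$-mass to infinity (the radial embedding is not compact into $L^2$). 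The resolution is to let the subcritical term $-\frac{\beta_a}{2}G$ break this degeneracy quantitatively, the single inequality $c^2h(\lambda)\le h(\lambda_0)<0$ simultaneously ruling out mass loss and selecting the scale $\lambda_0$.
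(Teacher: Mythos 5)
Your proof is correct, and although it follows the same overall blueprint as the paper (two-sided energy asymptotics $I(a)/a\asymp-\beta_a^{-\sigma/(2-\sigma)}$, the rescaling $w_a$, a Gagliardo--Nirenberg deficit argument, and identification of the limit by energy matching), the implementation differs at nearly every step. For the nonexistence part and for the energy upper bound you test with exact dilations of $\sqrt{a/a^*}\,Q$, which makes the Gagliardo--Nirenberg deficit of the trial function exactly proportional to $\beta_a$ and avoids the paper's cutoff functions and $O(\tau^{-\infty})$ bookkeeping. You prove \eqref{blowup} quantitatively ($I(a)\to-\infty$ forces $G(v_a)\to\infty$, hence $\|\nabla v_a\|_{L^2}\to\infty$), whereas the paper argues by contradiction through compactness (a bounded family would yield a minimizer for $I(a^*)$). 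In the profile step the paper uses the Brezis--Lieb decomposition plus the weak continuity of the potential energy, and must invoke the strict Hardy--Littlewood rearrangement inequality to exclude a translation $x_0\ne 0$ in the limit; you instead exploit that non-negative minimizers are radially symmetric and decreasing (this comes from the proof of Theorem \ref{theo-exi-gro-mas-cri}; note that for $d=1$ the decreasing property is genuinely needed for \eqref{com-emb}), so the radial compact embedding gives strong $L^{\alpha+2}$ convergence, a simple tail bound gives $G(w_a)\to G(U)$, and translations never enter. Your single inequality $c^2h(\lambda)\le h(\lambda_0)<0$ then fixes the mass and the scale simultaneously, which neatly consolidates the paper's pair of estimates \eqref{limsup-Ia} and \eqref{liminf-Ia}. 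Two points you should tighten, neither of which is a real gap: (i) $U\ne 0$ needs one line of justification --- it follows from your two-sided gradient bound $\|\nabla w_a\|_{L^2}\asymp 1$ together with $D(w_a)\to 0$ and the strong $L^{\alpha+2}$ convergence (or, as in the paper, from the lower bound on $G(w_a)$ and its convergence); (ii) your hedge $c\le 1$ is weaker than what you have already established: once $D(U)=0$, $U\ne 0$ and $\|U\|^2_{L^2}\le a^*$ are known, the sharp Gagliardo--Nirenberg inequality itself forces $\|U\|^2_{L^2}=a^*$, i.e. $c=1$ and no loss of mass, so the energy-matching step is really only needed to identify $\lambda=\lambda_0$.
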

	Note that since $\|\nabla |v|\|_{L^2} \leq \|\nabla v\|_{L^2}$, we can always assume that minimizers for $I(a)$ are non-negative. The proof is inspired by recent arguments of Phan \cite{Phan} as follows. The first step is to derive energy estimates for $I(a)$ (see \eqref{energy-estimate}). Using these estimates and a suitable change of variables, we show that the sequence of minimizers converges strongly in $H^1$ to an optimizer for the Gagliardo-Nirenberg (GN) inequality
	\begin{align} \label{sharp-GN-inequality}
	\|v\|^{\frac{4}{d}+2}_{L^{\frac{4}{d}+2}} \leq \frac{d+2}{d} \left(\frac{\|v\|_{L^2}}{\|Q\|_{L^2}}\right)^{\frac{4}{d}} \|\nabla v\|^2_{L^2}.
	\end{align}
	It then follows from the uniqueness (up to symmetries) of optimizers for the GN inequality that the limit equals to $Q$ modulo symmetries. Finally, we determine the exact limit by matching the energy.
	
	In the mass-supercritical case, i.e. $\alpha>\frac{4}{d}$, the energy functional is no longer bounded from below on $S(a)$. Indeed, let $v \in H^1$ be such that $\|v\|^2_{L^2} =a$. We define $v^\lambda(x) := \lambda^{\frac{d}{2}} v(\lambda x)$. It is clear that $\|v^\lambda\|^2_{L^2} = \|v\|^2_{L^2} =a$ and
	\[
	E(v^\lambda) = \frac{\lambda^2}{2} \|\nabla v\|^2_{L^2} -\frac{\lambda^\sigma}{2} G(v) - \frac{\lambda^\beta}{\alpha+2} \|v\|^{\alpha+2}_{L^{\alpha+2}},
	\]
	where $\beta$ is as in \eqref{def-bet}. Since  $\alpha>\frac{4}{d}$ or $\beta>2$, it follows that $E(v^\lambda)\rightarrow -\infty$ as $\lambda \rightarrow +\infty$. There is thus no minimizer for $I(a)$ in this case. Although there is no minimizers for $I(a)$, one may find normalized solutions for \eqref{ell-equ-att} in the mass-supercritical case by following a recent method of Bellazzini-Boussaid-Jeanjean-Visciglia \cite{BBJV}. The idea is to consider the local minimizing problem
	\[
	\inf\{E(v) \ : \ v \in S(a) \cap B(r)\},
	\]
	where
	\[
	B(r):= \{ v \in H^1 \ : \ H_0(v) := \|\nabla v\|^2_{L^2} - G(v) \leq r\}.
	\]
	This method works well for potentials $V$ satisfying
	\[
	\inf \left\{ \|\nabla v\|^2_{L^2} + \int V|v|^2 dx \ : \ \|v\|^2_{L^2}= 1\right\} >0,
	\]
	for instance, $V = |x|^2$ or $V= \sum_{j=1}^k x_j^2$, where $x= (x_1, \cdots, x_k, \cdots, x_d)$. In the case of attractive inverse-power potential, the minimum of the spectrum is negative, and the method of \cite{BBJV} is not directly applicable. 
	
	After the paper is submitted, the author was informed by Prof. Ohta that the blow-up result given in Theorem $\ref{theo-sha-threshold}$ is actually proved by Fukaya-Ohta in \cite{FO}. In fact, they proved that if $u_0 \in \mathcal{B}_\omega$ and $|x| u_0 \in L^2$, where
	\[
	\mathcal{B}_\omega := \{v \in H^1 \ : \ \|v\|_{L^2} \leq \|\phi\|_{L^2}, S_\omega(v) <S_\omega(\phi), \|v\|_{L^{\alpha+2}} > \|\phi\|_{L^{\alpha+2}}, Q(v)<0 \},
	\]
	then the corresponding solution to the focusing problem \eqref{NLS-att} blows up in finite time. Moreover, it is not hard to check that $\mathcal{K}^-_\omega = \mathcal{B}_\omega$. 
	
	This paper is organized as follows. In Section $\ref{S2}$, we prove the existence of ground states given in Proposition $\ref{prop-exi-min}$. In Section $\ref{S3}$, we give the proof of sharp threshold of global existence and blow-up for the focusing problem \eqref{NLS-att} given in Theorem $\ref{theo-sha-threshold}$. Section $\ref{S4}$ is devoted to the existence and stability of standing waves given in Theorems $\ref{theo-exi-min-mas-sub}$ and $\ref{theo-exi-gro-mas-cri}$. In Section $\ref{S5}$, we study the blow-up behavior of standing waves in the mass-critical case. Finally, the uniqueness of positive radial solutions to \eqref{ell-equ-att} is given in Appendix.

	\section{Existence of ground states}
	\label{S2}
	In this section, we prove the existence of ground states given in Proposition $\ref{prop-exi-min}$. To do so, we define the functional
	\[
	H_\omega(v):= \|\nabla v\|^2_{L^2} - G(v) + \omega\|v\|^2_{L^2}. 
	\]
	Thanks to \eqref{eig-est} and Hardy's inequality, we see that for $\omega>-\mu_1$ fixed,
	\begin{align} \label{equ-nor}
	H_\omega(v) \sim \|v\|^2_{H^1}.
	\end{align}
	More precisely, there exists $C>0$ such that
	\[
	\frac{\min\{C, \omega+\mu_1\}}{2} \|v\|^2_{H^1} \leq H_\omega(v) \leq \max\{1, \omega\} \|v\|^2_{H^1}.
	\]
	In fact, the upper bound follows easily from the fact $G(v)\geq 0$. To see the lower bound, we first have from \eqref{eig-est} that
	\begin{align} \label{lower-1}
	H_\omega(v) \geq (\omega+\mu_1) \|v\|^2_{L^2}. 
	\end{align}
	On the other hand, by Hardy's inequality (see e.g. \cite[Lemma 2.6]{ZZ})
	\begin{align} \label{hardy}
	\int |x|^{-\sigma} |v(x)|^2 dx \leq C\||\nabla|^{\sigma/2} v\|^2_{L^2} \leq C \|\nabla v\|^\sigma_{L^2} \|v\|^{2-\sigma}_{L^2}
	\end{align}
	and the fact $0<\sigma<2$, the Young inequality implies that
	\[
	G(v) \leq \frac{\sigma}{2} \|\nabla v\|^2_{L^2} + C \|v\|^2_{L^2}
	\]
	for some constant $C>0$. It follows that
	\begin{align} \label{lower-2}
	H_\omega(v) \geq \frac{2-\sigma}{2} \|\nabla v\|^2_{L^2} + (\omega-C)\|v\|^2_{L^2}
	\end{align}
	By choosing $\lambda$ such that $\omega-C + \lambda(\omega+\mu_1) \geq 0$, we infer from \eqref{lower-1} and \eqref{lower-2} that
	\[
	(1+\lambda) H_\omega(v) \geq \frac{2-\sigma}{2} \|\nabla v\|^2_{L^2} + (\omega - C+\lambda(\omega+\mu_1)) \|v\|^2_{L^2}
	\]
	hence
	\[
	H_\omega(v) \geq \frac{2-\sigma}{2(1+\lambda)} \|\nabla v\|^2_{L^2}.
	\]
	This together with \eqref{lower-1} imply
	\[
	2H_\omega(v) \geq \frac{2-\sigma}{2(1+\lambda)} \|\nabla v\|^2_{L^2} + (\omega+\mu_1) \|v\|^2_{L^2}
	\]
	which shows the lower bound.
	
	Note that the action functional can be rewritten as
	\begin{align} \label{act-fun}
	S_\omega(v) = \frac{1}{2} K_\omega(v) +\frac{\alpha}{2(\alpha+2)} \|v\|^{\alpha+2}_{L^{\alpha+2}} = \frac{1}{\alpha+2} K_\omega(v) + \frac{\alpha}{2(\alpha+2)} H_\omega(v).
	\end{align}
	
	\begin{lemma} \label{lem-non-empty-K-ome}
		Let $d\geq 1$, $0<\sigma<\min \{2,d\}$ and $0<\alpha<\frac{4}{d-2}$ if $d\geq 3$ ($0<\alpha<\infty$ if $d=1,2$). If $\omega>-\mu_1$, then there exists $v\in H^1\backslash \{0\}$ such that $K_\omega(v)=0$. In particular, the minimizing problem \eqref{d-ome} is well-defined.
	\end{lemma}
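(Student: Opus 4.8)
The plan is to produce the desired $v$ by scaling a single nonzero test function along the ray $\{\lambda w : \lambda>0\}$ and reducing the problem to solving a scalar equation. First I would fix any $w\in H^1\backslash\{0\}$, for instance a nonzero function in $C_c^\infty$; since $0<\alpha<\frac{4}{d-2}$ (resp. $0<\alpha<\infty$ in low dimensions), the Sobolev embedding $H^1\hookrightarrow L^{\alpha+2}$ guarantees that $\|w\|_{L^{\alpha+2}}$ is finite, and it is clearly positive for $w\neq 0$.

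Next, using $K_\omega(v)=H_\omega(v)-\|v\|^{\alpha+2}_{L^{\alpha+2}}$ together with the homogeneities $H_\omega(\lambda w)=\lambda^2 H_\omega(w)$ and $\|\lambda w\|^{\alpha+2}_{L^{\alpha+2}}=\lambda^{\alpha+2}\|w\|^{\alpha+2}_{L^{\alpha+2}}$, I would write
\[
K_\omega(\lambda w)=\lambda^2 H_\omega(w)-\lambda^{\alpha+2}\|w\|^{\alpha+2}_{L^{\alpha+2}}=\lambda^2\left(H_\omega(w)-\lambda^\alpha\|w\|^{\alpha+2}_{L^{\alpha+2}}\right).
\]
The crucial input is the equivalence \eqref{equ-nor}, valid precisely because $\omega>-\mu_1$, which ensures $H_\omega(w)>0$. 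Combined with $\|w\|^{\alpha+2}_{L^{\alpha+2}}>0$ and $\alpha>0$, this lets me solve the bracket for the unique positive root
\[
\lambda_0:=\left(\frac{H_\omega(w)}{\|w\|^{\alpha+2}_{L^{\alpha+2}}}\right)^{1/\alpha}.
\]
Setting $v:=\lambda_0 w$ then yields $v\in H^1\backslash\{0\}$ with $K_\omega(v)=0$, as required, so the admissible set in \eqref{d-ome} is nonempty and $d(\omega)$ is well-defined.

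The argument is essentially elementary once \eqref{equ-nor} is available, and I do not anticipate a genuine obstacle. The only point requiring care is the positivity of $H_\omega(w)$: were $\omega\leq-\mu_1$, the quadratic form $H_\omega$ could fail to be coercive, the root $\lambda_0$ need not exist, and the construction would break down. This is exactly where the hypothesis $\omega>-\mu_1$ enters, through \eqref{eig-est} and Hardy's inequality as recorded in the lines preceding the lemma. The condition $\alpha>0$ is likewise essential, since it makes the two homogeneity exponents $2$ and $\alpha+2$ distinct, so that the bracket genuinely changes sign and possesses a positive zero.
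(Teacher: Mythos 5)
Your proposal is correct and follows essentially the same route as the paper: scale a fixed nonzero function, use $\omega>-\mu_1$ to get $H_\omega(w)>0$, and solve $K_\omega(\lambda w)=\lambda^2 H_\omega(w)-\lambda^{\alpha+2}\|w\|^{\alpha+2}_{L^{\alpha+2}}=0$ for the positive root $\lambda_0$. The only cosmetic difference is that you invoke the norm equivalence \eqref{equ-nor} for positivity of $H_\omega(w)$, whereas the paper cites \eqref{eig-est} directly; both rest on the same hypothesis.
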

	\begin{proof}
		Let $v\in H^1 \backslash \{0\}$. If $K_\omega(v)=0$, we are done. If $K_\omega(v) \ne 0$, then for any $\lambda>0$,
		\[
		K_\omega(\lambda v) = \lambda^2 H_\omega(v) - \lambda^{\alpha+2} \|v\|^{\alpha+2}_{L^{\alpha+2}}.
		\]
		Note that since $\omega >-\mu_1$, by \eqref{eig-est}, $H_\omega(v) \geq (\omega+\mu_1) \|v\|^2_{L^2}>0$. It follows that
		$K_\omega(\lambda_0 v) =0$, where
		\[
		\lambda_0 = \left( \frac{H_\omega(v)}{\|v\|^{\alpha+2}_{L^{\alpha+2}}} \right)^{\frac{1}{\alpha}}>0.
		\]
		It closes the proof.
	\end{proof}
	
	\begin{lemma} \label{lem-pos-d-ome}
		$d(\omega)>0$.
	\end{lemma}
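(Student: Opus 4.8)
The plan is to exploit the decomposition \eqref{act-fun} of the action functional together with the norm equivalence \eqref{equ-nor}. On the Nehari constraint $K_\omega(v)=0$, the first identity in \eqref{act-fun} collapses to
\[
S_\omega(v) = \frac{\alpha}{2(\alpha+2)} \|v\|^{\alpha+2}_{L^{\alpha+2}},
\]
while the second gives $S_\omega(v) = \frac{\alpha}{2(\alpha+2)} H_\omega(v)$; in particular $K_\omega(v)=0$ forces $H_\omega(v) = \|v\|^{\alpha+2}_{L^{\alpha+2}}$. Thus every admissible $v$ already satisfies $S_\omega(v)>0$ (since $v\ne 0$ and $H_\omega(v)>0$ by \eqref{equ-nor}), but to conclude $d(\omega)>0$ I need a \emph{uniform} positive lower bound, i.e. I must rule out a minimizing sequence along which $H_\omega(v_n)\to 0$.

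The lower bound will come from the superlinearity of the $L^{\alpha+2}$ term. Since $0<\alpha<\frac{4}{d-2}$ for $d\geq 3$ (and $\alpha$ arbitrary when $d=1,2$), the Sobolev embedding $H^1\hookrightarrow L^{\alpha+2}$ holds, so $\|v\|^{\alpha+2}_{L^{\alpha+2}} \leq C\|v\|^{\alpha+2}_{H^1}$. Combining this with $\|v\|^2_{H^1} \lesssim H_\omega(v)$ from \eqref{equ-nor} produces a constant $C_\omega>0$ with
\[
\|v\|^{\alpha+2}_{L^{\alpha+2}} \leq C_\omega\, H_\omega(v)^{\frac{\alpha+2}{2}}.
\]

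I would then feed the constraint identity $H_\omega(v)=\|v\|^{\alpha+2}_{L^{\alpha+2}}$ into this estimate. Because $H_\omega(v)>0$, dividing by $H_\omega(v)$ leaves $1\leq C_\omega\, H_\omega(v)^{\alpha/2}$, hence
\[
H_\omega(v) \geq C_\omega^{-2/\alpha} =: \delta > 0
\]
for every admissible $v$. Therefore $S_\omega(v) = \frac{\alpha}{2(\alpha+2)} H_\omega(v) \geq \frac{\alpha}{2(\alpha+2)}\delta$, and passing to the infimum yields $d(\omega) \geq \frac{\alpha}{2(\alpha+2)}\delta > 0$.

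I do not expect a serious obstacle: the argument is purely a coercivity-plus-superlinearity estimate, and the only analytic input beyond \eqref{act-fun} and \eqref{equ-nor} is the Sobolev embedding, whose validity is guaranteed precisely by the energy-subcritical range of $\alpha$. The one point to keep straight is that the exponent $\frac{\alpha}{2}$ arising after dividing by $H_\omega(v)$ is strictly positive (because $\alpha>0$), which is exactly what turns the scaling inequality into a genuine lower bound rather than a tautology.
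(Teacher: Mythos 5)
Your proposal is correct and follows essentially the same route as the paper: both exploit the constraint identity $H_\omega(v)=\|v\|^{\alpha+2}_{L^{\alpha+2}}$ together with the Sobolev embedding and the norm equivalence \eqref{equ-nor} to extract a uniform positive lower bound, then conclude via \eqref{act-fun}. The only difference is cosmetic — the paper bounds $\|v\|^2_{L^{\alpha+2}}$ by $C_2\|v\|^{\alpha+2}_{L^{\alpha+2}}$ and divides, while you bound $H_\omega(v)$ by $C_\omega H_\omega(v)^{(\alpha+2)/2}$ and divide — which is the same superlinearity argument in equivalent variables.
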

	
	\begin{proof}
		Let $v\in H^1 \backslash \{0\}$ be such that $K_\omega(v)=0$. Using \eqref{equ-nor} and the fact $H_\omega(v)= \|v\|^{\alpha+2}_{L^{\alpha+2}}$, the Sobolev embedding implies 
		\[
		\|v\|^2_{L^{\alpha+2}} \leq C_1 \|v\|^2_{H^1} \leq C_2 H_\omega(v) = C_2 \|v\|^{\alpha+2}_{L^{\alpha+2}},
		\]
		for some constants $C_1, C_2>0$. It follows that
		\[
		S_\omega(v) = \frac{\alpha}{2(\alpha+2)} \|v\|^{\alpha+2}_{L^{\alpha+2}} \geq \frac{\alpha}{2(\alpha+2)} \left(\frac{1}{C_2}\right)^{\frac{\alpha+2}{\alpha}}.
		\]
		The result follows by taking the infimum over $v \in H^1\backslash \{0\}$ with $K_\omega(v) =0$. 
	\end{proof}
	
	We denote the set of all minimizers for \eqref{d-ome} by
	\[
	\mathcal{M}_\omega:= \left\{ v \in H^1\backslash \{0\} \ : \ K_\omega(v) =0, \ S_\omega(v) = d(\omega) \right\}.
	\]
	It is well-known (see e.g. \cite{FO, FO-poten}) that if $\mathcal{M}_\omega$ is non-empty, then $\mathcal{M}_\omega \equiv \mathcal{G}_\omega$. In \cite{FO}, Fukaya-Ohta makes use of the weak continuity of the potential energy (see e.g. \cite[Theorem 11.4]{LL}) to show the non-emptiness of $\mathcal{M}_\omega$. In the following result, we give an alternative proof of this result.
	
	\begin{lemma} \label{lem-non-emp-M-ome}
		The set $\mathcal{M}_\omega$ is non-empty.
	\end{lemma}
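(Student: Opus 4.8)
The plan is to take a minimizing sequence for $d(\omega)$ and extract a strongly convergent subsequence whose limit is the desired minimizer, using the radial compactness embedding \eqref{com-emb} in place of the weak continuity of the potential energy invoked in \cite{FO}. Let $(v_n) \subset H^1 \backslash \{0\}$ satisfy $K_\omega(v_n) = 0$ and $S_\omega(v_n) \to d(\omega)$. Since $K_\omega(v_n)=0$, the first identity in \eqref{act-fun} gives $S_\omega(v_n) = \frac{\alpha}{2(\alpha+2)} \|v_n\|^{\alpha+2}_{L^{\alpha+2}}$, while the second gives $S_\omega(v_n) = \frac{\alpha}{2(\alpha+2)} H_\omega(v_n)$; combined with \eqref{equ-nor}, the latter shows that $(v_n)$ is bounded in $H^1$.

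First I would reduce to radially symmetric decreasing functions by symmetrization. Replacing $v_n$ by its Schwarz symmetric-decreasing rearrangement $v_n^*$, the P\'olya--Szeg\H{o} inequality gives $\|\nabla v_n^*\|_{L^2} \leq \|\nabla v_n\|_{L^2}$, the rearrangement preserves all $L^p$ norms, and the Hardy--Littlewood inequality (using that $|x|^{-\sigma}$ is itself symmetric decreasing) yields $G(v_n^*) \geq G(v_n)$; hence $H_\omega(v_n^*) \leq H_\omega(v_n)$ and $K_\omega(v_n^*) \leq K_\omega(v_n) = 0$. If $K_\omega(v_n^*)<0$, I rescale as in Lemma \ref{lem-non-empty-K-ome} by some $\lambda_n \in (0,1)$ so that $K_\omega(\lambda_n v_n^*) = 0$; since $S_\omega$ restricted to the constraint equals $\frac{\alpha}{2(\alpha+2)}\|\cdot\|^{\alpha+2}_{L^{\alpha+2}}$ and $\lambda_n \leq 1$, the rescaled functions still form a minimizing sequence. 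Thus I may assume $v_n \in H^1_{\text{rd}}$ for all $n$.

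Next, up to a subsequence $v_n \rightharpoonup v$ weakly in $H^1$, and by the compact embedding \eqref{com-emb} (note that $2 < \alpha+2 < \frac{2d}{d-2}$) we have $v_n \to v$ strongly in $L^{\alpha+2}$. The crucial step---the point where I replace the weak-continuity argument of \cite{FO}---is to prove that $G(v_n) \to G(v)$. For this I split the integral at radius $R$: on $\{|x|>R\}$ the radial decay estimate \eqref{rad-decrea-est} gives $|x|^{-\sigma}|v_n(x)|^2 \leq C|x|^{-\sigma-d}\|v_n\|^2_{L^2}$, whose integral is $O(R^{-\sigma})$ uniformly in $n$; on $\{|x| \leq R\}$ I apply H\"older's inequality with an exponent $p$ satisfying $\max\{1,d/2\} < p < d/\sigma$ (such $p$ exists precisely because $\sigma < \min\{2,d\}$), so that $|x|^{-\sigma} \in L^p(B_R)$ while the conjugate exponent $p'$ has $2p'$ lying in the range of the compact embedding \eqref{com-emb}; this forces $|v_n|^2 \to |v|^2$ in $L^{p'}(B_R)$. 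Letting $R \to \infty$ yields $G(v_n) \to G(v)$.

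With these convergences in hand the conclusion follows by a standard argument. Since $S_\omega(v_n) = \frac{\alpha}{2(\alpha+2)}\|v_n\|^{\alpha+2}_{L^{\alpha+2}} \to \frac{\alpha}{2(\alpha+2)}\|v\|^{\alpha+2}_{L^{\alpha+2}}$ and $S_\omega(v_n) \to d(\omega)>0$ by Lemma \ref{lem-pos-d-ome}, we obtain $\|v\|^{\alpha+2}_{L^{\alpha+2}} = \frac{2(\alpha+2)}{\alpha}d(\omega)>0$, so $v \neq 0$. Weak lower semicontinuity of $\|\nabla \cdot\|^2_{L^2}$ and $\|\cdot\|^2_{L^2}$ (recall $\omega>-\mu_1>0$, so the latter enters with a positive coefficient) together with $G(v_n) \to G(v)$ give $H_\omega(v) \leq \liminf H_\omega(v_n)$, hence $K_\omega(v) \leq \liminf K_\omega(v_n) = 0$. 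Finally, if $K_\omega(v)<0$ I rescale by some $\lambda \in (0,1)$ to restore $K_\omega(\lambda v)=0$ and obtain $S_\omega(\lambda v) = \lambda^{\alpha+2}\frac{\alpha}{2(\alpha+2)}\|v\|^{\alpha+2}_{L^{\alpha+2}} < d(\omega)$, contradicting the definition of $d(\omega)$; therefore $K_\omega(v)=0$ and $S_\omega(v) = d(\omega)$, i.e. $v \in \mathcal{M}_\omega$. The main obstacle is exactly the strong convergence $G(v_n) \to G(v)$ of the singular potential term, which is why the radial reduction and the constraint $\sigma < \min\{2,d\}$ are essential.
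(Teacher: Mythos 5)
Your proof is correct and follows essentially the same route as the paper's: reduction to a radially symmetric decreasing minimizing sequence via rearrangement and rescaling, boundedness in $H^1$, the compact embedding \eqref{com-emb}, convergence of the potential term $G$ along the sequence, and a rescaling argument to identify the weak limit as a minimizer. The only differences are cosmetic: you prove $G(v_n)\to G(v)$ by splitting at a large radius $R$ with a uniform tail bound, whereas the paper splits at $|x|=1$ and applies H\"older with suitably chosen exponents on both regions, and you close by contradiction through the $L^{\alpha+2}$-characterization of $S_\omega$ on the Nehari constraint, whereas the paper runs a chain of inequalities forcing $\lambda_0=1$ (which in addition yields strong $H^1$ convergence of the minimizing sequence).
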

	
	\begin{proof}
		We first observe from Lemma $\ref{lem-non-empty-K-ome}$ that if $v\in H^1\backslash \{0\}$ satisfying $K_\omega(v) \leq 0$, then there exists $\lambda_0 \in (0,1]$ such that $K_\omega(\lambda_0 v)=0$. 
		
		We next claim that any minimizing sequence for $d(\omega)$ can be chosen to be radially symmetric and radially decreasing. Indeed, let $(v_n)_{n\geq 1}$ be a minimizing sequence for $d(\omega)$. Let $v^*_n$ be the symmetric rearrangement of $v_n$. Note that the symmetric rearrangement preserves the $L^p$-norm and by Polya-Szego's inequality, $\|\nabla v^*_n\|_{L^2} \leq \|\nabla v_n\|_{L^2}$. We also have from the Hardy-Littlewood's inequality that 
		\[
		\int |x|^{-\sigma} |v_n(x)|^2 dx \leq \int |x|^{-\sigma} |v^*_n(x)|^2 dx.
		\]
		It follows that $H_\omega(v^*_n) \leq H_\omega(v_n)$ and $K_\omega(v_n^*) \leq K_\omega(v_n)=0$. By the above observation, there exists $(\mu_n)_{n\geq 1} \subset (0,1]$ such that $K_\omega(\mu_n v^*_n) =0$ for all $n\geq 1$. We have
		\[
		S_\omega(\mu_n v^*_n) = \frac{\alpha}{2(\alpha+2)} \mu_n^2 H_\omega(v_n^*) \leq \frac{\alpha}{2(\alpha+2)} H_\omega(v^*_n) \leq \frac{\alpha}{2(\alpha+2)} H_\omega(v_n) = S_\omega(v_n).
		\]
		This shows that $(\mu_n v^*_n)_{n\geq 1}$ is also a minimizing sequence for $d(\omega)$.
		
		We next show that any minimizing sequence for $d(\omega)$ is bounded in $H^1$. In fact, let $(v_n)_{n\geq 1}$ be a minimizing sequence for $d(\omega)$. It follows that $H_\omega(v_n) = \|v_n\|^{\alpha+2}_{L^{\alpha+2}}$ for all $n\geq 1$. By \eqref{act-fun}, we have
		\[
		S_\omega(v_n) = \frac{\alpha}{2(\alpha+2)} H_\omega(v_n) = \frac{\alpha}{2(\alpha+2)} \|v_n\|^{\alpha+2}_{L^{\alpha+2}} \rightarrow d(\omega)
		\]
		as $n\rightarrow \infty$. We infer that there exists $C>0$ such that
		\[
		H_\omega(v_n) \leq \frac{2(\alpha+2)}{\alpha} d(\omega) +C
		\]
		for all $n\geq 1$. Thanks to \eqref{equ-nor}, we see that $(v_n)_{n\geq 1}$ is a bounded sequence in $H^1$. 
		
		Now let $(v_n)_{n\geq 1}$ be a radially symmetric and radially decreasing minimizing sequence for $d(\omega)$. Since $(v_n)_{n\geq 1}$ is bounded in $H^1$, the compact embedding \eqref{com-emb} implies that there exist $v\in H^1$ and a subsequence still denoted by $(v_n)_{n\geq 1}$ such that $v_n \rightharpoonup v$ weakly in $H^1$ and $v_n \rightarrow v$ strongly in $L^q$ for any $2<q<\frac{2d}{d-2}$ if $d\geq 3$ ($2<q<\infty$ if $d=1,2$). This implies in particular that $v\ne 0$. Indeed, since $K_\omega(v_n) =0$ for all $n\geq 1$, by the same argument as in Lemma $\ref{lem-pos-d-ome}$, there exists $C>0$ such that $\|v_n\|_{L^{\alpha+2}} \geq C$. By the strong convergence, we get $\|v\|_{L^{\alpha+2}} \geq C>0$.
		
		We next claim that $G(v_n) \rightarrow G(v)$ as $n\rightarrow \infty$. To see this, we estimate
		\begin{align*}
		|G(v_n) -G(v)| &\leq c \int |x|^{-\sigma} ||v_n(x)|-|v(x)||(|v_n|+|v(x)|) dx \\
		&\leq c \int |x|^{-\sigma} |v_n(x) -v(x)| (|v_n(x)| + |v(x)|) dx \\
		&= c \left[\int_{B(0,1)}  + \int_{B^c(0,1)} \right] |x|^{-\sigma} |v_n(x) -v(x)| (|v_n(x)| + |v(x)|) dx =: I_1 + I_2,
		\end{align*}
		where $B(0,1)$ the unit ball in $\R^d$ and $B^c(0,1)$ is its complement.
		
		On $B(0,1)$, we have
		\begin{align*}
		I_1 \lesssim \||x|^{-\sigma}\|_{L^\gamma(B(0,1))} \||v_n-v|(|v_n|+|v|)\|_{L^\mu} \lesssim \|v_n-v\|_{L^\delta} (\|v_n\|_{L^\tau} + \|v\|_{L^\tau}),
		\end{align*}
		where $\gamma, \mu, \delta, \tau\geq 1$ satisfy 
		\[
		1=\frac{1}{\gamma}+ \frac{1}{\mu}, \quad \frac{d}{\gamma}>\sigma, \quad \frac{1}{\mu}=\frac{1}{\delta} + \frac{1}{\tau}.
		\]
		Here the second condition ensures that $\||x|^{-\sigma}\|_{L^\gamma(B(0,1))}<\infty$. Using the fact $v_n \rightarrow v$ for $2<q<\frac{2d}{d-2}$ if $d\geq 3$ ($2<q<\infty$ if $d=1,2$) and the Sobolev embedding $H^1 \subset L^q$, we see that if we are able to choose $2<\delta, \tau<\frac{2d}{d-2}$ in the case $d\geq 3$ ($2<\delta, \tau<\infty$ in the case $d=1,2$) so that
		\begin{align} \label{con-del-tau}
		\frac{1}{\delta} + \frac{1}{\tau} <\frac{d-\sigma}{d}
		\end{align}
		then $I_1 \rightarrow 0$ as $n\rightarrow \infty$. The condition \eqref{con-del-tau} is fulfilled if we take $\delta=\tau=\frac{2d}{d-2}- \varep$ with $0<\varep<\frac{2d(2-\sigma)}{(d-2)(d-\sigma)}$ in the case $d\geq 3$ and $\delta = \tau$ large enough in the case $d=1,2$. 
		
		On $B^c(0,1)$, we estimate
		\begin{align*}
		I_2 \lesssim \||x|^{-\sigma}\|_{L^\gamma(B^c(0,1))} \||v_n-v|(|v_n|+|v|)\|_{L^\mu} \lesssim \|v_n-v\|_{L^\delta} (\|v_n\|_{L^\tau} + \|v\|_{L^\tau}),
		\end{align*}
		where $\gamma, \mu, \delta, \tau\geq 1$ satisfy 
		\[
		1=\frac{1}{\gamma}+ \frac{1}{\mu}, \quad \frac{d}{\gamma}<\sigma, \quad \frac{1}{\mu}=\frac{1}{\delta} + \frac{1}{\tau}.
		\]
		If we choose $2<\delta, \tau<\frac{2d}{d-2}$ in the case $d\geq 3$ ($2<\delta,\tau<\infty$ in the case $d=1,2$) so that
		\[
		\frac{1}{\delta}+\frac{1}{\tau} >\frac{d-\sigma}{d},
		\]
		then $I_2 \rightarrow 0$ as $n\rightarrow \infty$. The above condition is satisfied for $\delta=\tau=2+\varep$ with $0<\varep<\frac{2\sigma}{d-\sigma}$.
		
		Combining the above two cases, we prove that $G(v_n) \rightarrow G(v)$ as $n\rightarrow \infty$. It follows that
		\[
		K_\omega(v) \leq \liminf_{n\rightarrow \infty} K_\omega(v_n)=0.
		\]
		There thus exists $\lambda_0 \in (0,1]$ such that $K_\omega(\lambda_0 v) =0$. By the definition of $d(\omega)$,
		\[
		d(\omega) \leq S_\omega(\lambda_0 v) = \frac{\alpha}{2(\alpha+2)} \lambda_0^2 H_\omega(v) \leq \frac{\alpha}{2(\alpha+2)} \liminf_{n\rightarrow \infty} H_\omega(v_n) = \liminf_{n\rightarrow \infty} S_\omega(v_n) = d(\omega).
		\]
		This implies that $S_\omega(\lambda_0 v) = d(\omega)$ or $\lambda_0 v$ is a minimizer for $d(\omega)$. Moreover, all inequalities above are in fact equalities, that is, $\lambda_0 =1$, $K_\omega(v)=0$ and $H_\omega(v)= \lim_{n \rightarrow \infty} H_\omega(v_n)$ which implies by \eqref{equ-nor} that $v_n \rightarrow v$ strongly in $H^1$. The proof is complete.
	\end{proof}

	\section{Sharp threshold for global existence and blow-up}
	\label{S3}		
	In this section, we prove the sharp threshold of global existence and blow-up for the focusing problem \eqref{NLS-att}. To this end, we set 
	\begin{align} \label{scaling}
	v^\lambda(x):= \lambda^{\frac{d}{2}} v(\lambda x), \quad \lambda>0.
	\end{align}
	A direct computation shows
	\begin{align} \label{pro-scaling}
	\begin{aligned}
	\|v^\lambda\|^2_{L^2} &= \|v\|^2_{L^2}, & \|\nabla v^\lambda\|^2_{L^2} &= \lambda^2 \|\nabla v\|^2_{L^2}, \\
	G(v^\lambda) &= \lambda^\sigma G(v), & \|v^\lambda\|^{\alpha+2}_{L^{\alpha+2}} &= \lambda^{\beta} \|v\|^{\alpha+2}_{L^{\alpha+2}},
	\end{aligned}
	\end{align}
	where $\beta$ is given in \eqref{def-bet}. It follows that
	\begin{align} \label{S-ome-sca}
	S_\omega(v^\lambda) = \frac{\lambda^2}{2} \|\nabla v\|^2_{L^2} - \frac{\lambda^\sigma}{2} G(v) + \frac{\omega}{2} \|v\|^2_{L^2} - \frac{\lambda^\beta}{\alpha+2} \|v\|^{\alpha+2}_{L^{\alpha+2}},
	\end{align}
	and
	\begin{align} \label{I-ome-sca}
	Q(v^\lambda) = \lambda^2 \|\nabla v\|^2_{L^2} -\frac{\sigma}{2} \lambda^\sigma G(v) - \frac{\beta}{\alpha+2} \lambda^\beta \|v\|^{\alpha+2}_{L^{\alpha+2}} = \lambda \partial_\lambda S_\omega(v^\lambda).
	\end{align}
	In particular, $Q(v)= \left. \partial_\lambda S_\omega(v^\lambda)\right|_{\lambda=1}$.
	
	\begin{lemma} \label{lem-poh-ide}
		Let $d\geq 1$, $0<\sigma<\min \{2,d\}$, $0<\alpha<\frac{4}{d-2}$ if $d\geq 3$ ($0<\alpha<\infty$ if $d=1,2$) and $\omega>-\mu_1$. Let $\phi \in H^1 \backslash \{0\}$ be a solution to \eqref{ell-equ-att}. Then it holds that
		\begin{align} \label{poh-ide}
		\begin{aligned}
		\|\nabla \phi\|^2_{L^2} &+ \omega \|\phi\|^2_{L^2} - G(\phi) - \|\phi\|^{\alpha+2}_{L^{\alpha+2}} =0, \\
		\frac{2-d}{2} \|\nabla \phi\|^2_{L^2} &- \frac{d\omega}{2} \|\phi\|^2_{L^2} + \frac{(d-\sigma)}{2} G(\phi) + \frac{d}{\alpha+2} \|\phi\|^{\alpha+2}_{L^{\alpha+2}}=0. 
		\end{aligned}
		\end{align}
		In particular,  $K_\omega(\phi)=Q(\phi)=0$.
	\end{lemma}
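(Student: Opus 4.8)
The plan is to establish the two identities in \eqref{poh-ide} by testing the equation \eqref{ell-equ-att} against two different functions: the solution $\phi$ itself, which yields the first (Nehari-type) identity, and the dilation field $x\cdot\nabla\phi$, which yields the second (Pohozaev-type) identity. Since $\phi$ is a priori only in $H^1$, I read \eqref{ell-equ-att} in the weak sense $S'_\omega(\phi)=0$ and take real parts of all pairings throughout.

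For the first identity I would simply pair $S'_\omega(\phi)=0$ with $\phi$. By the very definition of the Nehari functional, $\ret{\scal{S'_\omega(\phi),\phi}}=K_\omega(\phi)$, so this pairing gives at once
\[
\|\nabla \phi\|^2_{L^2} + \omega \|\phi\|^2_{L^2} - G(\phi) - \|\phi\|^{\alpha+2}_{L^{\alpha+2}} = K_\omega(\phi) = 0,
\]
which is the first line of \eqref{poh-ide}. Here $\phi\in H^1$ is an admissible test function and $G(\phi)$ is finite by Hardy's inequality \eqref{hardy}, so no further justification is needed.

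For the second identity I would carry out the classical dilation computation, i.e. pair $S'_\omega(\phi)=0$ with $x\cdot\nabla\phi$ and take real parts. The four terms of the equation contribute, after integration by parts: the Laplacian $-\Delta\phi$ gives $\tfrac{2-d}{2}\|\nabla\phi\|^2_{L^2}$; the mass term $\omega\phi$ gives $-\tfrac{d\omega}{2}\|\phi\|^2_{L^2}$ (via $\ret{\phi\, x\cdot\nabla\overline\phi}=\tfrac12 x\cdot\nabla|\phi|^2$); the potential term $-|x|^{-\sigma}\phi$ gives $\tfrac{d-\sigma}{2}G(\phi)$, using the homogeneity $x\cdot\nabla(|x|^{-\sigma})=-\sigma|x|^{-\sigma}$, so that $\nabla\cdot(|x|^{-\sigma}x)=(d-\sigma)|x|^{-\sigma}$; and the nonlinearity $-|\phi|^\alpha\phi$ gives $\tfrac{d}{\alpha+2}\|\phi\|^{\alpha+2}_{L^{\alpha+2}}$, using $\ret{|\phi|^\alpha\phi\, x\cdot\nabla\overline\phi}=\tfrac{1}{\alpha+2}\,x\cdot\nabla(|\phi|^{\alpha+2})$. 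Summing these to zero gives exactly the second line of \eqref{poh-ide}.

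The main obstacle is the rigorous justification of this dilation identity, since $x\cdot\nabla\phi$ is not a priori an admissible $H^1$ test function and one must rule out boundary contributions at infinity while controlling the singularity of $|x|^{-\sigma}$ at the origin. I would resolve this by first upgrading the regularity of $\phi$ through elliptic bootstrapping (so that $\phi$ lies in $H^2_{\mathrm{loc}}$ with suitable decay), then inserting a radial cutoff $\chi(x/R)$ into the pairing and letting $R\to\infty$, verifying that the commutator and boundary terms vanish in the limit; the potential contribution is legitimate because $0<\sigma<\min\{2,d\}$ keeps $|x|^{-\sigma}\in L^1_{\mathrm{loc}}$ and \eqref{hardy} controls $G(\phi)$. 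Finally, the two ``in particular'' assertions follow immediately: $K_\omega(\phi)=0$ is precisely the first identity, while $Q(\phi)=0$ is obtained by using the first identity to eliminate $\omega\|\phi\|^2_{L^2}$ from the second, after which the remaining terms collapse (with $\beta=\tfrac{d\alpha}{2}$ from \eqref{def-bet}) exactly into $\|\nabla\phi\|^2_{L^2}-\tfrac{\sigma}{2}G(\phi)-\tfrac{\beta}{\alpha+2}\|\phi\|^{\alpha+2}_{L^{\alpha+2}}=Q(\phi)$; equivalently, $Q(\phi)=0$ can be read off directly from the scaling relation \eqref{I-ome-sca} together with the explicit expansion \eqref{S-ome-sca}, since $\phi$ is a critical point of $S_\omega$.
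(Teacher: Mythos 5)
Your proposal is correct and follows essentially the same route as the paper: test the equation with $\overline{\phi}$ to obtain the Nehari identity, test with $x\cdot\nabla\overline{\phi}$ and take real parts to obtain the Pohozaev identity (with exactly the same term-by-term contributions), and then eliminate $\omega\|\phi\|^2_{L^2}$ between the two — equivalently, add $\tfrac{d}{2}$ times the first to the second — to get $Q(\phi)=0$. The only cosmetic difference lies in the regularization: the paper truncates on annuli $\{r\leq |x|\leq R\}$, excising the origin as well as infinity and citing \cite[Lemma 3.2]{Dinh-ins} for the detailed computation, whereas you cut off only at infinity and handle the singularity at the origin via local integrability of $|x|^{-\sigma}$ and Hardy's inequality — a minor variant of the same justification.
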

	
	\begin{proof}
		By multiplying both sides of \eqref{ell-equ-att} with $\overline{\phi}$ and integrating over $\R^d$, we get the first identity in \eqref{poh-ide} which is $K_\omega(\phi)=0$. Multiplying both sides of \eqref{ell-equ-att} with $x \cdot \nabla \overline{\phi}$, integrating over $\R^d$ and taking the real part, we obtain the second identity in \eqref{poh-ide}. Note that we only make formal computations here. Due to the singularity of the inverse-power potential at zero, we need to integrate on the annulus $\{ x\in \R^d \ : \ r \leq |x| \leq R\}$ for $R>r>0$ and then take the limit as $R\rightarrow +\infty$ and $r\rightarrow 0$. We refer the reader to \cite[Lemma 3.2]{Dinh-ins} for detailed computations in the case of inverse-square potential. Multiplying both sides of the first identity with $\frac{d}{2}$ and adding to the second identity, we obtain $Q(v)=0$. The proof is complete.
	\end{proof}

	\begin{proposition} \label{prop-key-est}
		Let $d\geq 1$, $0<\sigma<\min \{2,d\}$, $\frac{4}{d}<\alpha<\frac{4}{d-2}$ if $d\geq 3$ ($\frac{4}{d}<\alpha<\infty$ if $d=1,2$) and $\omega>-\mu_1$. Let $\phi \in \mathcal{G}_\omega$ be such that $\left. \partial^2_\lambda S_\omega(\phi^\lambda) \right|_{\lambda=1} \leq 0$, where $\phi^\lambda$ is as in \eqref{scaling}. Let $v \in H^1 \backslash \{0\}$ be such that 
		\[
		\|v\|_{L^2} \leq \|\phi\|_{L^2}, \quad K_\omega(v) \leq 0, \quad Q(v) \leq 0.
		\]
		Then it holds that
		\[
		Q(v) \leq 2(S_\omega(v) - S_\omega(\phi)).
		\]
	\end{proposition}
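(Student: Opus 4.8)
The plan is to recast the desired inequality as a statement about the single-variable function $f(\lambda):=S_\omega(v^\lambda)$, where $v^\lambda$ is the mass-preserving dilation \eqref{scaling}. By \eqref{S-ome-sca}--\eqref{I-ome-sca} one has $f'(\lambda)=\lambda^{-1}Q(v^\lambda)$, so that $Q(v)=f'(1)$, and the conclusion $Q(v)\le 2(S_\omega(v)-S_\omega(\phi))$ is equivalent to $S_\omega(\phi)\le S_\omega(v)-\tfrac12 Q(v)$. A direct computation using \eqref{def-bet} produces the dilation-free expression
\[
S_\omega(w)-\tfrac12 Q(w)=\frac{\sigma-2}{4}G(w)+\frac{\omega}{2}\|w\|_{L^2}^2+\frac{\beta-2}{2(\alpha+2)}\|w\|_{L^{\alpha+2}}^{\alpha+2}=:R(w).
\]
Since $Q(\phi)=0$ by Lemma \ref{lem-poh-ide}, we have $R(\phi)=S_\omega(\phi)$, so the whole statement reduces to proving $R(v)\ge R(\phi)$ under the three constraints $\|v\|_{L^2}\le\|\phi\|_{L^2}$, $K_\omega(v)\le 0$ and $Q(v)\le 0$.

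Next I would extract the geometric content of the hypothesis $\left.\partial_\lambda^2 S_\omega(\phi^\lambda)\right|_{\lambda=1}\le 0$. Writing $Q(\phi^\lambda)=\lambda^2\|\nabla\phi\|_{L^2}^2-\tfrac{\sigma}{2}\lambda^\sigma G(\phi)-\tfrac{\beta}{\alpha+2}\lambda^\beta\|\phi\|_{L^{\alpha+2}}^{\alpha+2}$ and using $0<\sigma<2<\beta$ (the latter since $\alpha>\tfrac4d$), the rescaled quantity $\lambda\mapsto\lambda^{-\sigma}Q(\phi^\lambda)$ starts negative at $\lambda=0^+$, has a unique interior critical point which is a maximum, and tends to $-\infty$; hence $Q(\phi^\lambda)$ is positive on at most one interval and $\lambda\mapsto S_\omega(\phi^\lambda)$ has at most one local maximum. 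Because $Q(\phi)=0$ and, at any zero of $Q(\phi^\lambda)$, the second derivative $\partial_\lambda^2 S_\omega(\phi^\lambda)=\lambda^{-1}\partial_\lambda Q(\phi^\lambda)$ has the sign of $\partial_\lambda Q(\phi^\lambda)$, the hypothesis forces $\lambda=1$ to be exactly that local maximum, so that $Q(\phi^\lambda)<0$ for all $\lambda>1$ and $S_\omega(\phi^\lambda)\le S_\omega(\phi)$ for $\lambda\ge 1$. Concretely, the second-order condition is the inequality $\tfrac{\sigma(2-\sigma)}{2}G(\phi)\le\tfrac{\beta(\beta-2)}{\alpha+2}\|\phi\|_{L^{\alpha+2}}^{\alpha+2}$, which says that the potential contribution to $\phi$ is dominated by the nonlinear one.

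The heart of the matter, and the main obstacle, is the comparison $R(v)\ge R(\phi)$. The difficulty is that $R$ is genuinely not dilation-invariant and the potential enters with the unfavourable sign $\tfrac{\sigma-2}{4}<0$; worse, since $G(v^\lambda)$ scales like $\lambda^\sigma$ with $\sigma<2$, one has $Q(v^\lambda)<0$ for all small $\lambda$, so---in sharp contrast to the potential-free case---one cannot simply dilate $v$ down onto the surface $\{Q=0\}$. My plan is therefore to combine the two available scalings. Using $K_\omega(v)\le 0$, the observation opening the proof of Lemma \ref{lem-non-emp-M-ome} furnishes $\mu\in(0,1]$ with $K_\omega(\mu v)=0$, whence $S_\omega(\mu v)\ge d(\omega)=S_\omega(\phi)$ by \eqref{d-ome} and Proposition \ref{prop-exi-gro-sta}; I would then use $Q(v)\le 0$ together with the monotonicity of $\lambda\mapsto S_\omega(\phi^\lambda)$ on $[1,\infty)$ established above to transfer this lower bound to $R(v)$, the constraint $\|v\|_{L^2}\le\|\phi\|_{L^2}$ being used precisely to absorb the dilation-invariant mass term $\tfrac{\omega}{2}\|v\|_{L^2}^2$ (note $\omega>-\mu_1>0$, so its coefficient is positive). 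The delicate bookkeeping lies in controlling the sign of the $G$-contribution: one must play the negative term $\tfrac{\sigma-2}{4}G(v)$ against the nonlinear term, and it is exactly here that the second-order inequality on $\phi$---and hence, via Lemma \ref{lem-large-omega}, the largeness of $\omega$ that makes the potential a lower-order perturbation---is indispensable. Once $R(v)\ge S_\omega(\phi)$ is secured, the estimate \eqref{estimate-Q} follows immediately.
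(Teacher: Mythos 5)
Your reduction is correct and matches the paper's starting point: the claim is equivalent to $S_\omega(\phi)\le S_\omega(v)-\tfrac12 Q(v)$, your formula for $R(w)=S_\omega(w)-\tfrac12 Q(w)$ is right, and your reading of the hypothesis $\left.\partial_\lambda^2 S_\omega(\phi^\lambda)\right|_{\lambda=1}\le 0$ as the inequality $\tfrac{\sigma(2-\sigma)}{2}G(\phi)\le\tfrac{\beta(\beta-2)}{\alpha+2}\|\phi\|_{L^{\alpha+2}}^{\alpha+2}$ (using $Q(\phi)=0$) is also correct. But the proof stops exactly where the work begins. The inequality $R(v)\ge R(\phi)$ is never established: you describe a plan (``transfer this lower bound to $R(v)$'', ``the delicate bookkeeping lies in controlling the sign of the $G$-contribution'') without producing the estimate, and that bookkeeping is not a routine verification --- it is the entire content of the proposition. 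In the paper it occupies the chain of estimates \eqref{est-L2}, \eqref{est-L2-v}, \eqref{est-G}, the target inequality \eqref{equ-con}, and a three-stage calculus argument (the functions $g$, $g_1$, $g_2$) showing that an explicit rational function of $\lambda_0$, $\sigma$, $\beta$ has a fixed sign on $(0,1)$.

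Moreover, the route you sketch has a structural defect. You propose to reach the Nehari manifold by scalar multiplication, $K_\omega(\mu v)=0$ with $\mu\in(0,1]$, and then to compare $S_\omega(\mu v)\ge S_\omega(\phi)$ with $R(v)$. But
\[
R(v)-S_\omega(\mu v)=-\frac{\mu^2}{2}\|\nabla v\|_{L^2}^2+\left(\frac{\sigma-2}{4}+\frac{\mu^2}{2}\right)G(v)+\frac{\omega(1-\mu^2)}{2}\|v\|_{L^2}^2+\left(\frac{\beta-2}{2(\alpha+2)}+\frac{\mu^{\alpha+2}}{\alpha+2}\right)\|v\|_{L^{\alpha+2}}^{\alpha+2},
\]
and the term $-\tfrac{\mu^2}{2}\|\nabla v\|_{L^2}^2$ is negative and unbounded, with nothing among your constraints available to absorb it. This is precisely why the paper uses the $L^2$-invariant dilation instead: it picks $\lambda_0\in(0,1)$ with $K_\omega(v^{\lambda_0})=0$ (possible since $\lim_{\lambda\to 0}K_\omega(v^\lambda)=\omega\|v\|_{L^2}^2>0$) and works with $f(\lambda)=S_\omega(v^\lambda)-\tfrac{\lambda^2}{2}Q(v)$, in which the gradient terms cancel identically; the desired bound then reduces to $f(\lambda_0)\le f(1)$, i.e. to the pure comparison \eqref{equ-con} between $G(v)$ and $\|v\|_{L^{\alpha+2}}^{\alpha+2}$, which is finally extracted from the second-order condition on $\phi$, the mass constraint, $Q(v)\le 0$ and the ground-state property. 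Note also that the monotonicity of $\lambda\mapsto S_\omega(\phi^\lambda)$ on $[1,\infty)$, which you derive correctly, concerns dilations of $\phi$ only; you give no mechanism connecting it to the arbitrary function $v$, and the paper's proof never needs it. In short: correct setup and correct identification of where the hypotheses must enter, but the central inequality is missing and the bridge you sketch would not close as stated.
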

	
	\begin{remark} \label{rem-intersection}
		It is easy to see from Proposition $\ref{prop-key-est}$ that for $\phi \in \mathcal{G}_\omega$ satisfying $\left. \partial^2_\lambda S_\omega(\phi^\lambda) \right|_{\lambda=1} \leq 0$, 
		\begin{align} \label{empty-set}
		\left\{v\in H^1 \backslash \{0\} \ : \ \|v\|_{L^2} \leq \|\phi\|_{L^2}, S_\omega(v) < S_\omega(\phi), K_\omega(v)<0, Q(v)=0 \right\} = \emptyset,
		\end{align}
		Indeed, if there exists $v\in H^1 \backslash \{0\}$ satisfying $\|v\|_{L^2} \leq \|\phi\|_{L^2}, S_\omega(v) < S_\omega(\phi), K_\omega(v)<0$ and $Q(v)=0$, then by Proposition $\ref{prop-key-est}$,
		\[
		0 = Q(v) \leq 2 (S_\omega(v) - S_\omega(\phi)) <0
		\]
		which is a contradiction.
	\end{remark}

	\noindent {\it Proof of Proposition $\ref{prop-key-est}$.} The proof is similar to \cite[Lemma 3.2]{FO}, where $K_\omega(v) \leq 0$ is replaced by $\|v\|_{L^{\alpha+2}} \geq \|\phi\|_{L^{\alpha+2}}$. For the reader's convenience, we give some details. If $K_\omega(v)=0$, then by Proposition $\ref{prop-exi-gro-sta}$ and $Q(v)\leq 0$, we have $S_\omega(\phi) \leq S_\omega(v) \leq S_\omega(v) - \frac{1}{2}Q(v)$. Suppose that $K_\omega(v)<0$. Let $v^\lambda$ be as in \eqref{scaling} and define 
	\begin{align*}
	f(\lambda)&:= S_\omega(v^\lambda) - \frac{\lambda^2}{2} Q(v) = \frac{1}{2} \left(\frac{\sigma}{2}\lambda^2-\lambda^\sigma\right) G(v) + \frac{1}{\alpha+2} \left( \frac{\beta}{2} \lambda^2-\lambda^\beta\right) \|v\|^{\alpha+2}_{L^{\alpha+2}} + \frac{\omega}{2} \|v\|^2_{L^2}.
	\end{align*}
	We have
	\begin{align*}
	K_\omega(v^\lambda) &:= \lambda^2 \|\nabla v\|^2_{L^2} - \lambda^\sigma G(v) + \omega \|v\|^2_{L^2} - \lambda^\beta \|v\|^{\alpha+2}_{L^{\alpha+2}}.
	\end{align*}
	Since $\lim_{\lambda \rightarrow 0} K_\omega(v^\lambda)= \omega \|v\|^2_{L^2}>0$ and $K_\omega(v)<0$, it follows that there exists $\lambda_0 \in (0,1)$ such that $K_\omega(v^{\lambda_0})=0$. If we have $f(\lambda_0) \leq f(1)$, then it follows from $Q(v) \leq 0$ that 
	\[
	S_\omega(\phi) \leq S_\omega(v^{\lambda_0}) \leq S_\omega(v^{\lambda_0}) - \frac{\lambda_0^2}{2} Q(v) \leq S_\omega(v) -\frac{1}{2} Q(v).
	\]
	It remains to prove $f(\lambda_0) \leq f(1)$ which is in turn equivalent to
	\begin{align} \label{equ-con}
	G(v) \leq \frac{2(2\lambda_0^\beta - \beta\lambda_0^2 -2 +\beta)}{(\alpha+2)(\sigma \lambda_0^2 - 2 \lambda_0^\sigma -\sigma +2)} \|v\|^{\alpha+2}_{L^{\alpha+2}}.
	\end{align}
	Note that by \eqref{S-ome-sca}, the condition $\left. \partial^2_\lambda S_\omega(\phi^\lambda) \right|_{\lambda=1} \leq 0$ is equivalent to
	\begin{align} \label{con-phi}
	\|\nabla \phi\|^2_{L^2} - \frac{\sigma(\sigma-1)}{2} G(\phi) - \frac{\beta}{\alpha+2} (\beta-1) \|\phi\|^{\alpha+2}_{L^{\alpha+2}} \leq 0.
	\end{align}
	Since $K_\omega(\phi)=Q(\phi)=0$, we have from $\sigma K_\omega(\phi)-(\sigma+1)Q(\phi)=0$ and \eqref{con-phi} that
	\begin{align*}
	\omega \sigma \|\phi\|^2_{L^2} &= \|\nabla \phi\|^2_{L^2} - \frac{\sigma(\sigma-1)}{2} G(\phi) + \left( \sigma - \frac{\beta(\sigma+1)}{\alpha+2} \right) \|\phi\|^{\alpha+2}_{L^{\alpha+2}}  \\
	&\leq \left(\sigma + \frac{\beta(\beta -\sigma-2)}{\alpha+2}  \right) \|\phi\|^{\alpha+2}_{L^{\alpha+2}}. 
	\end{align*}
	In particular,
	\begin{align} \label{est-L2}
	\omega \|\phi\|_{L^2}^2 \leq \left( 1+ \frac{\beta(\beta-\sigma-2)}{\sigma(\alpha+2)} \right) \|\phi\|^{\alpha+2}_{L^{\alpha+2}}. 
	\end{align}
	Since $K_\omega(v^{\lambda_0}) =0$, by Proposition $\ref{prop-exi-gro-sta}$, we have
	\[
	\frac{\alpha}{2(\alpha+2)} \|\phi\|^{\alpha+2}_{L^{\alpha+2}} = S_\omega(\phi) \leq S_\omega(v^{\lambda_0}) = \frac{\alpha}{2(\alpha+2)} \|v^{\lambda_0}\|^{\alpha+2}_{L^{\alpha+2}} = \frac{\alpha}{2(\alpha+2)} \lambda_0^\beta \|v\|^{\alpha+2}_{L^{\alpha+2}}.
	\]
	We thus get from \eqref{est-L2} and the assumption $\|v\|_{L^2} \leq \|\phi\|_{L^2}$ that
	\begin{align} \label{est-L2-v}
	\omega \|v\|^2_{L^2} \leq \left( 1+ \frac{\beta(\beta-\sigma-2)}{\sigma(\alpha+2)} \right) \lambda_0^\beta \|v\|^{\alpha+2}_{L^{\alpha+2}}.
	\end{align}
	We also have from $K_\omega(v^{\lambda_0}) =0$, \eqref{est-L2-v} and $Q(v) \leq 0$ that
	\begin{align*}
	G(v) &= \lambda_0^{2-\sigma} \|\nabla v\|^2_{L^2} + \lambda_0^{-\sigma} \omega \|v\|^2_{L^2} - \lambda_0^{\beta-\sigma} \|v\|^{\alpha+2}_{L^{\alpha+2}} \\
	& \leq \lambda_0^{2-\sigma} \|\nabla v\|^2_{L^2} + \frac{\beta(\beta-\sigma-2)}{\sigma(\alpha+2)} \lambda_0^{\beta-\sigma} \|v\|^{\alpha+2}_{L^{\alpha+2}} \\
	&\leq \frac{\sigma}{2} \lambda_0^{2-\sigma} G(v) + \frac{\beta}{\alpha+2} \left( \lambda_0^{2-\sigma} + \frac{\beta-\sigma-2}{\sigma} \lambda_0^{\beta-\sigma} \right) \|v\|^{\alpha+2}_{L^{\alpha+2}}.
	\end{align*}
	Thus,
	\begin{align} \label{est-G}
	G(v) \leq \frac{2\beta}{(\alpha+2)(2-\sigma \lambda_0^{2-\sigma})} \left( \lambda_0^{2-\sigma} + \frac{\beta-\sigma-2}{\sigma} \lambda_0^{\beta-\sigma} \right) \|v\|^{\alpha+2}_{L^{\alpha+2}}.
	\end{align}
	In view of \eqref{equ-con} and \eqref{est-G}, it suffices to show that
	\begin{align*} 
	\frac{\beta}{2-\sigma \lambda_0^{2-\sigma}} \left( \lambda_0^{2-\sigma} + \frac{\beta-\sigma-2}{\sigma} \lambda_0^{\beta-\sigma} \right) \leq \frac{2\lambda_0^\beta - \beta\lambda_0^2 -2 +\beta}{\sigma \lambda_0^2 - 2 \lambda_0^\sigma -\sigma +2}
	\end{align*}
	which is equivalent to
	\[
	g(\lambda_0) := \frac{(2-\sigma \lambda_0^{2-\sigma}) (2 \lambda^\beta_0- \beta \lambda_0^2 - 2 +\beta)}{ \beta \lambda_0^{\beta-\sigma} ( \sigma \lambda_0^2 - 2 \lambda_0^\sigma - \sigma +2)} - \frac{1}{\lambda_0^{\beta-2}} - \frac{\beta-\sigma-2}{\sigma} \geq 0.
	\]
	Since $\lim_{\lambda\rightarrow 1} g(\lambda) =0$, the above inequality follows if we have
	\begin{align*}
	g'(\lambda) = \frac{2(1-\lambda^{2-\sigma})}{\beta \lambda^{\beta-\sigma+1}( \sigma \lambda^2- 2\lambda^\sigma - \sigma +2)^2} &\left[2\sigma(2-\sigma) \lambda^\beta - \sigma \beta (\beta-\sigma) \lambda^2 \right.\\
	&\left.+ 2\beta(\beta-2) \lambda^\sigma - (\beta-\sigma) (\beta-2) (2-\sigma) \right] \leq 0
	\end{align*}
	for all $\lambda \in (0,1)$. The above inequality holds if we have
	\[
	g_1(\lambda) := 2\sigma(2-\sigma) \lambda^\beta - \sigma \beta (\beta-\sigma) \lambda^2 + 2\beta(\beta-2) \lambda^\sigma - (\beta-\sigma) (\beta-2) (2-\sigma) \leq 0
	\]
	for all $\lambda \in (0,1)$. Since $g_1(1) =0$, it is enough to show that
	\[
	g'_1(\lambda) = 2\sigma \beta \lambda^{\sigma-1} [ (2-\sigma) \lambda^{\beta-\sigma} - (\beta-\sigma) \lambda^{2-\sigma} + \beta-2] \geq 0
	\]
	for all $\lambda \in (0,1)$. This is equivalent to
	\[
	g_2(\lambda):= (2-\sigma) \lambda^{\beta-\sigma} - (\beta-\sigma) \lambda^{2-\sigma} + \beta-2 \geq 0
	\]
	for all $\lambda \in (0,1)$. Since $g_2(1)=0$ and 
	\[
	g'_2(\lambda) = - (\beta-\sigma)(2-\sigma) \lambda^{1-\sigma} (1-\lambda^{\beta-2}) \leq 0,
	\]
	we have $g_2(\lambda) \geq 0$ for all $\lambda \in (0,1)$. Therefore, we obtain $f(\lambda_0) \leq f(1)$ and the proof is complete.
	\hfill $\Box$
	
	\begin{lemma} \label{lem-inv-set}
		Let $d\geq 1$, $0<\sigma <\min \{2,d\}$, $\frac{4}{d} <\alpha <\frac{4}{d-2}$ if $d\geq 3$ ($\frac{4}{d} <\alpha<\infty$ if $d=1,2$) and $\omega>-\mu_1$. Let $\phi \in \mathcal{G}_\omega$ be such that $\left. \partial^2_\lambda S_\omega(\phi^\lambda)\right|_{\lambda=1} \leq 0$, where $\phi^\lambda$ is as in \eqref{scaling}. Then the sets $\mathcal{K}^\pm_\omega$ are invariant under the flow of the focusing problem \eqref{NLS-att}.
	\end{lemma}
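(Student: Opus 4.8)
The plan is to lean on the two conservation laws of \eqref{NLS-att} and on a continuity/contradiction argument. Let $u$ be the solution to the focusing problem \eqref{NLS-att} with initial data $u_0 \in \mathcal{K}^\pm_\omega$, defined on its maximal interval of existence $I$. Conservation of mass gives $\|u(t)\|_{L^2} = \|u_0\|_{L^2}$, and conservation of mass and energy together give conservation of the action $S_\omega(u(t)) = E(u(t)) + \frac{\omega}{2} M(u(t)) = S_\omega(u_0)$. Consequently the first two defining constraints of $\mathcal{K}^\pm_\omega$, namely $\|u(t)\|_{L^2} \leq \|\phi\|_{L^2}$ and $S_\omega(u(t)) < S_\omega(\phi)$, hold for all $t \in I$ with no extra work. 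It remains to control the signs of $K_\omega(u(t))$ and $Q(u(t))$.

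The mechanism is continuity in time of these two functionals. Since $u \in C(I, H^1)$ and each term entering $K_\omega$ and $Q$ is a continuous functional on $H^1$ (the Dirichlet and mass terms trivially, $G$ by Hardy's inequality \eqref{hardy}, and $v \mapsto \|v\|^{\alpha+2}_{L^{\alpha+2}}$ by the Sobolev embedding $H^1 \hookrightarrow L^{\alpha+2}$, valid because $2 < \alpha + 2 < \frac{2d}{d-2}$ in the present energy-subcritical range), the maps $t \mapsto K_\omega(u(t))$ and $t \mapsto Q(u(t))$ are continuous on $I$. Note also that $u(t) \neq 0$ for all $t$ by conservation of mass, since $u_0 \neq 0$. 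First I would show that $K_\omega(u(t)) < 0$ is preserved: if it were not, continuity would produce a time $t_0 \in I$ with $K_\omega(u(t_0)) = 0$; as $u(t_0) \in H^1 \backslash \{0\}$, the definition \eqref{d-ome} of $d(\omega)$ together with $d(\omega) = S_\omega(\phi)$ (from $\phi \in \mathcal{G}_\omega$ and Proposition $\ref{prop-exi-gro-sta}$) would force $S_\omega(u(t_0)) \geq S_\omega(\phi)$, contradicting $S_\omega(u(t_0)) = S_\omega(u_0) < S_\omega(\phi)$. Hence $K_\omega(u(t)) < 0$ throughout $I$.

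The decisive step is the invariance of the sign of $Q$, and this is where the hypothesis $\left. \partial^2_\lambda S_\omega(\phi^\lambda)\right|_{\lambda=1} \leq 0$ enters. Suppose $u_0 \in \mathcal{K}^-_\omega$, so $Q(u_0) < 0$; if the sign were to change, continuity would give $t_0 \in I$ with $Q(u(t_0)) = 0$. But then $u(t_0)$ would satisfy simultaneously $\|u(t_0)\|_{L^2} \leq \|\phi\|_{L^2}$, $S_\omega(u(t_0)) < S_\omega(\phi)$, $K_\omega(u(t_0)) < 0$ and $Q(u(t_0)) = 0$, placing it in the set \eqref{empty-set}, which is empty by Remark $\ref{rem-intersection}$. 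This contradiction forces $Q(u(t)) < 0$ for all $t \in I$, so $u(t) \in \mathcal{K}^-_\omega$; the argument for $\mathcal{K}^+_\omega$ is identical with $Q(u_0) > 0$, the same emptiness of \eqref{empty-set} ruling out a zero of $Q$. The main obstacle is precisely that this emptiness — the payoff of Proposition $\ref{prop-key-est}$ under the second-variation hypothesis — is exactly what blocks $Q$ from crossing zero and thereby keeps $\mathcal{K}^-_\omega$ and $\mathcal{K}^+_\omega$ separately invariant; once it is in hand, invariance reduces to routine conservation laws and continuity.
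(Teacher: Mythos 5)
Your proposal is correct and follows essentially the same route as the paper's own proof: conservation of mass and energy for the first two constraints, continuity in time of $K_\omega$ and $Q$, the variational characterization of $d(\omega)$ from Proposition \ref{prop-exi-gro-sta} to forbid $K_\omega(u(t))=0$, and the emptiness of the set \eqref{empty-set} (Remark \ref{rem-intersection}, i.e.\ Proposition \ref{prop-key-est}) to forbid $Q(u(t))=0$. Your additional justifications (continuity of each functional on $H^1$, non-vanishing of $u(t)$ by mass conservation) only make explicit what the paper leaves implicit.
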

	
	\begin{proof}
		We only consider the case $\mathcal{K}^-_\omega$, the one for $\mathcal{K}^+_\omega$ is similar. Let $u_0 \in \mathcal{K}^-_\omega$, i.e. $\|u_0\|_{L^2} \leq \|\phi\|_{L^2}$, $S_\omega(u_0) <S_\omega(\phi)$, $K_\omega(u_0) <0$ and $Q(u_0)<0$. We will show that $u(t) \in \mathcal{K}^-_\omega$ for any $t$ in the existence time. By the conservation of mass and energy, we have 
		\begin{align} \label{inv-set-proof}
		\|u(t)\|_{L^2} = \|u_0\|_{L^2} \leq \|\phi\|_{L^2}, \quad S_\omega(u(t)) = S_\omega(u_0) <S_\omega(\phi)
		\end{align}
		for any $t \in I_{\text{max}}$, where $I_{\text{max}}$ is the maximal existence time interval. Let us prove $K_\omega(u(t))<0$ for any $t\in I_{\text{max}}$. Suppose that there exists $t_0 \in I_{\text{max}}$ such that $K_\omega(u(t_0)) \geq 0$. By the continuity of $t \mapsto K_\omega(u(t))$, there exists $t_1 \in I_{\text{max}}$ such that $K_\omega(u(t_1)) =0$. By Proposition $\ref{prop-exi-gro-sta}$, $S_\omega(u(t_1)) \geq S_\omega(\phi)$ which contradicts to \eqref{inv-set-proof}. We finally prove $Q(u(t))<0$ for any $t\in I_{\text{max}}$. Suppose it is not true, then there exists $t_2 \in I_{\text{max}}$ such that $Q(u(t_2)) \geq 0$. By continuity of $t \mapsto Q(u(t))$, there exists $t_3 \in I_{\text{max}}$ such that $Q(u(t_3)) =0$. We thus obtain a function $u(t_3) \in H^1 \backslash \{0\}$ satisfying $\|u(t_3)\|_{L^2} \leq \|\phi\|_{L^2}, S_\omega(u(t_3))<S_\omega(\phi), K_\omega(u(t_3))<0$ and $Q(u(t_3))=0$. This is not possible due to \eqref{empty-set}. The proof is complete.
	\end{proof}
	
	\begin{proposition} \label{prop-sharp-threshold}
		Let $d\geq 1$, $0<\sigma<\min\{2,d\}$, $\frac{4}{d}<\alpha<\frac{4}{d-2}$ if $d\geq 3$ ($\frac{4}{d}<\alpha<\infty$ if $d=1,2$) and $\omega>-\mu_1$. Let $\phi \in \mathcal{G}_\omega$ be such that $\left.\partial^2_\lambda S_\omega(\phi^\lambda)\right|_{\lambda=1} \leq 0$, where $\phi^\lambda$ is as in \eqref{scaling}. 
		\begin{itemize}
			\item If $u_0 \in \mathcal{K}^-_\omega$ and $|x|u_0 \in L^2$, then the corresponding solution to \eqref{NLS-att} blows up in finite time.
			\item If $u_0 \in \mathcal{K}^+_\omega$, then the corresponding solution to \eqref{NLS-att} exists globally in time.
		\end{itemize}
	\end{proposition}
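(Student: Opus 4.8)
The plan is to treat the two assertions separately, in each case first invoking Lemma~\ref{lem-inv-set} to propagate the defining inequalities of $\mathcal{K}^\pm_\omega$ along the flow, and then combining the virial identity \eqref{virial-action} with conservation of mass and of the action $S_\omega$ (the latter following from conservation of mass and energy). Throughout I would use that the hypothesis $\left.\partial^2_\lambda S_\omega(\phi^\lambda)\right|_{\lambda=1}\le 0$ is exactly what Proposition~\ref{prop-key-est} and Lemma~\ref{lem-inv-set} require, so both are at our disposal.

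For the blow-up statement, suppose $u_0 \in \mathcal{K}^-_\omega$ with $|x|u_0 \in L^2$. By Lemma~\ref{lem-inv-set} the solution stays in $\mathcal{K}^-_\omega$ on the maximal interval $I_{\max}$, so $\|u(t)\|_{L^2} \leq \|\phi\|_{L^2}$, $K_\omega(u(t)) < 0$ and $Q(u(t)) < 0$ for every $t \in I_{\max}$. These are precisely the hypotheses of Proposition~\ref{prop-key-est}, so \eqref{estimate-Q} gives
\[
Q(u(t)) \leq 2\bigl(S_\omega(u(t)) - S_\omega(\phi)\bigr) = 2\bigl(S_\omega(u_0) - S_\omega(\phi)\bigr) =: -2\delta,
\]
where $\delta := S_\omega(\phi) - S_\omega(u_0) > 0$ is constant by conservation of $S_\omega$ together with $S_\omega(u_0) < S_\omega(\phi)$. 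Setting $f(t) := \|x u(t)\|^2_{L^2} \geq 0$, the virial identity \eqref{virial-action} yields $f''(t) = 8 Q(u(t)) \leq -16\delta < 0$ on $I_{\max}$. The classical convexity argument of Glassey \cite{Glassey} then applies: a nonnegative function whose second derivative is bounded above by a negative constant cannot persist on a half-line, so $I_{\max}$ is finite and $u$ blows up in finite time.

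For the global statement, suppose $u_0 \in \mathcal{K}^+_\omega$. By Lemma~\ref{lem-inv-set}, $u(t) \in \mathcal{K}^+_\omega$ for all $t \in I_{\max}$; in particular $Q(u(t)) > 0$ and $\|u(t)\|_{L^2} = \|u_0\|_{L^2}$. I would extract an a priori gradient bound from the coercive combination $S_\omega - \tfrac1\beta Q$, in which the nonlinear term cancels:
\[
S_\omega(v) - \frac{1}{\beta} Q(v) = \Bigl(\frac12 - \frac1\beta\Bigr)\|\nabla v\|^2_{L^2} - \frac{\beta-\sigma}{2\beta}\,G(v) + \frac{\omega}{2}\|v\|^2_{L^2}.
\]
Since $\beta = d\alpha/2 > 2$ in the mass-supercritical regime, the gradient coefficient is positive. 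Evaluating at $v = u(t)$, using $Q(u(t)) > 0$, $S_\omega(u(t)) = S_\omega(u_0)$ and $\omega > -\mu_1 > 0$, rearrangement gives
\[
\Bigl(\frac12 - \frac1\beta\Bigr)\|\nabla u(t)\|^2_{L^2} \leq S_\omega(u_0) + \frac{\beta-\sigma}{2\beta}\,G(u(t)).
\]
The remaining potential term is controlled by Hardy's inequality \eqref{hardy}, which gives $G(u(t)) \leq C\|\nabla u(t)\|^\sigma_{L^2}\|u_0\|^{2-\sigma}_{L^2}$; since $\sigma < 2$, Young's inequality absorbs a small multiple of $\|\nabla u(t)\|^2_{L^2}$ into the left-hand side, producing a bound on $\|\nabla u(t)\|_{L^2}$ uniform in $t$. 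Together with mass conservation this bounds $\|u(t)\|_{H^1}$ uniformly, and since the local theory (Theorem~\ref{theo-gwp-1}) gives a time of existence depending only on the $H^1$-norm, the blow-up alternative forces $I_{\max} = \R$.

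The genuinely substantive input is Proposition~\ref{prop-key-est}: once the key estimate \eqref{estimate-Q} holds on the invariant set $\mathcal{K}^-_\omega$, the blow-up half reduces to a routine Glassey convexity argument. The only real care needed in the global half is the absorption of the inverse-power potential via Hardy--Young, which succeeds precisely because $0 < \sigma < 2$; beyond this bookkeeping I expect no serious obstacle.
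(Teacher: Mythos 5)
Your proposal is correct and follows essentially the same route as the paper: the blow-up half combines the invariance of $\mathcal{K}^-_\omega$ (Lemma~\ref{lem-inv-set}), the key estimate of Proposition~\ref{prop-key-est}, conservation of mass and energy, and Glassey's convexity argument, while the global half extracts a uniform $H^1$ bound from the combination $S_\omega - \frac{1}{\beta}Q$ (in which the nonlinearity cancels) together with the Hardy--Young absorption of $G$, exactly as in the paper. The only cosmetic difference is that you discard the term $-\frac{\omega}{2}\|u(t)\|^2_{L^2}$ using $\omega>-\mu_1>0$, whereas the paper carries it along and invokes mass conservation at the end; both are valid.
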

	
	\begin{proof}
	Let us first consider the case $u_0 \in \mathcal{K}^-_\omega$ and $|x|u_0 \in L^2$. It is well-known that $|x|u(t) \in L^2$ for all $t\in I_{\text{max}}$. By \eqref{virial-action} and the convexity argument of Glassey \cite{Glassey}, it suffices to show there exists $\delta>0$ such that 
	\begin{align} \label{blowup-delta}
	Q(u(t)) \leq -\delta, \quad \forall t\in I_{\text{max}}.
	\end{align}
	To do so, we note that since $\mathcal{K}^-_\omega$ is invariant under the flow of \eqref{NLS-att}, $u(t) \in \mathcal{K}^-_\omega$ for all $t\in I_{\text{max}}$, i.e. $\|u(t)\|_{L^2} \leq \|\phi\|_{L^2}$, $S_\omega(u(t)) <S_\omega(\phi)$, $K_\omega(u(t)) <0$ and $Q(u(t))<0$ for all $t\in I_{\text{max}}$. Applying Proposition $\ref{prop-key-est}$ to $u(t)$, we get
	\[
	Q(u(t)) \leq 2(S_\omega(u(t)) - S_\omega(\phi)) = 2(S_\omega(u_0) - S_\omega(\phi)),
	\]
	for all $t \in I_{\text{max}}$, where we have used the conservation of mass and energy. This shows \eqref{blowup-delta} with $\delta:= 2(S_\omega(\phi) - S_\omega(u_0))>0$.

	We now consider $u_0 \in \mathcal{K}^+_\omega$. By the local well-posedness, it suffices to show there exists $C>0$ such that
	\begin{align} \label{uniform-H1-bound}
	\|u(t)\|_{H^1} \leq C, \quad \forall t \in I_{\text{max}}.
	\end{align}
	By Lemma $\ref{lem-inv-set}$, $u(t) \in \mathcal{K}^+_\omega$ for all $t\in I_{\text{max}}$, i.e. $\|u(t)\|_{L^2} \leq \|\phi\|_{L^2}, S_\omega(u(t)) <S_\omega(\phi)$, $K_\omega(u(t)) <0$ and $Q(u(t))>0$ for all $t\in I_{\text{max}}$. We have
	\begin{align*}
	\frac{1}{2} H_\omega(u(t)) - \frac{1}{\beta} Q(u(t)) = \left(\frac{1}{2} - \frac{1}{\beta}\right) \|\nabla u(t)\|^2_{L^2} + \frac{\omega}{2} \|u(t)\|^2_{L^2} - \frac{\beta-\sigma}{2\beta} G(u(t)) + \frac{1}{\alpha+2} \|u(t)\|^{\alpha+2}_{L^{\alpha+2}}.
	\end{align*}
	It follows that
	\begin{align*}
	\left(\frac{1}{2} - \frac{1}{\beta}\right) \|\nabla u(t)\|^2_{L^2} &= \frac{1}{2} H_\omega(u(t)) - \frac{1}{\alpha+2} \|u(t)\|^{\alpha+2}_{L^{\alpha+2}} -\frac{1}{\beta} Q(u(t)) - \frac{\omega}{2} \|u(t)\|^2_{L^2} +\frac{\beta-\sigma}{2\beta} G(u(t)) \\
	&=S_\omega(u(t)) - \frac{1}{\beta} Q(u(t)) - \frac{\omega}{2} \|u(t)\|^2_{L^2} +\frac{\beta-\sigma}{2\beta} G(u(t)) \\
	&<S_\omega(\phi) - \frac{\omega}{2} \|u(t)\|^2_{L^2} +\frac{\beta-\sigma}{2\beta} G(u(t)).
	\end{align*}
	By Hardy's inequality \eqref{hardy} and the fact $0<\sigma<2$, we apply the Young's inequality to have for any $\varep>0$,
	\begin{align} \label{har-ine-app}
	G(u(t)) \leq \varep \|\nabla u(t)\|^2_{L^2} + C(\varep) \|u(t)\|^2_{L^2}. 
	\end{align}
	We thus have that
	\[
	\left(\frac{1}{2} - \frac{1}{\beta} -\varep\right) \|\nabla u(t)\|^2_{L^2} < S_\omega(\phi) + \left(\frac{(\beta-\sigma) C(\varep)}{2\beta} - \frac{\omega}{2} \right) \|u(t)\|^2_{L^2}
	\]
	for all $t \in I_{\text{max}}$. Since we are considering the $L^2$-supercritical case, we see that $\beta>2$. By choosing $0<\varep<\frac{1}{2} -\frac{1}{\beta}$ and using the conservation of mass, we prove \eqref{uniform-H1-bound}. The proof is complete.
	\end{proof}
	
	\begin{remark}
		It is expected that the same finite time blow-up holds for radially symmetric initial data in $\mathcal{K}^-_\omega$. However, in the presence of inverse-power potentials with $0<\sigma<2$, it is not clear how to show it at the moment. In fact, by radial Sobolev embeddings (see e.g. \cite{Dinh-rep}), it suffices to show that for $\varep>0$ small enough, there exists $\delta(\varep)>0$ such that
		\begin{align} \label{blow-up-condition-radial}
		Q(u(t)) + \varep \|\nabla u(t)\|^2_{L^2} \leq -\delta(\varep)
		\end{align}
		for any $t \in I_{\text{max}}$. Note that
		\begin{align*}
		S_\omega(v) =\frac{1}{\beta} Q(v) + \frac{\beta-2}{2\beta} \|\nabla v\|^2_{L^2} + \frac{\omega}{2} \|v\|^2_{L^2} - \frac{\beta-2}{2\beta} G(v) =: \frac{1}{\beta} Q(v) + P_\omega(v)
		\end{align*}
		and
		\[
		\|\nabla v\|^2_{L^2} = \frac{2\beta}{\beta-2} P_\omega(v) -\frac{\omega \beta}{\beta-2} \|v\|^2_{L^2} + \frac{\beta-\sigma}{\beta-2} G(v).
		\]
		It follows that
		\begin{align*}
		Q(v) + \varep \|\nabla v\|^2_{L^2} &= \beta S_\omega(v) - \beta P_\omega(v) + \varep \|\nabla v\|^2_{L^2} \\
		&= \beta S_\omega(v) - \beta \left(1-\frac{2\varep}{\beta-2} \right) P_\omega(v) -\frac{\omega \beta \varep}{\beta-2} \|v\|^2_{L^2} + \frac{(\beta-\sigma)\varep}{\beta-2} G(v) \\
		&\leq \beta (1-\rho) S_\omega(\phi) - \beta \left(1-\frac{2\varep}{\beta-2} \right) P_\omega(v) + \frac{(\beta-\sigma)\varep}{\beta-2} G(v),
		\end{align*}
		where we have use $S_\omega(v) \leq (1-\rho)S_\omega(\phi)$ for some $\rho>0$ due to $S_\omega(v) <S_\omega(\phi)$. In the case of no potential, i.e. $c=0$ or $G(v)=0$, we can show that 
		\begin{align} \label{S-ome-no-potential}
		S_\omega(\phi) = \inf \{P_\omega(v) \ : \ v \in H^1 \backslash \{0\}, Q(v) =0\}.
		\end{align}
		Note that since $Q(v) <0$, there exists $\lambda_0 \in (0,1)$ such that $Q(\lambda_0 v)=0$. Thus
		\[
		S_\omega(\phi) \leq P_\omega(\lambda_0 v) = \lambda_0^2 P_\omega(v) < P_\omega(v).
		\]
		This shows that
		\[
		Q(v) + \varep\|\nabla v\|^2_{L^2} \leq -\beta \left( \rho - \frac{2\varep}{\beta-2}\right) S_\omega(\phi),
		\]
		and \eqref{blow-up-condition-radial} holds with $\delta(\varep) = \beta \left( \rho - \frac{2\varep}{\beta-2}\right) S_\omega(\phi)>0$. In the case of inverse-power potentials with $0<\sigma<2$, we do not have \eqref{S-ome-no-potential}, and there is an additional positive term $\frac{(\beta-\sigma)\varep}{\beta-2} G(v)$ which is difficult to control.
	\end{remark}

	\begin{lemma}[\cite{FO-poten}] \label{lem-large-omega}
		There exists $\omega_0>-\mu_1$ such that if $\omega \geq \omega_0$ and $\phi_\omega \in \mathcal{G}_\omega$, then $\left. \partial^2_\lambda S_\omega(\phi^\lambda_\omega) \right|_{\lambda=1} \leq 0$, where $\phi^\lambda_\omega$ is as in \eqref{scaling}.
	\end{lemma}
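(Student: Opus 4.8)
The plan is to reduce the claim to a single scalar inequality for ground states and then to observe that, after a suitable rescaling, the potential contribution becomes negligible as $\omega\to+\infty$. By \eqref{S-ome-sca} the assertion $\left.\partial^2_\lambda S_\omega(\phi^\lambda_\omega)\right|_{\lambda=1}\le 0$ is exactly \eqref{con-phi}, namely
\[
\|\nabla\phi_\omega\|^2_{L^2}-\frac{\sigma(\sigma-1)}{2}G(\phi_\omega)-\frac{\beta(\beta-1)}{\alpha+2}\|\phi_\omega\|^{\alpha+2}_{L^{\alpha+2}}\le 0.
\]
Since $\phi_\omega\in\mathcal G_\omega$, Lemma \ref{lem-poh-ide} gives $Q(\phi_\omega)=0$, i.e. $\|\nabla\phi_\omega\|^2_{L^2}=\frac{\sigma}{2}G(\phi_\omega)+\frac{\beta}{\alpha+2}\|\phi_\omega\|^{\alpha+2}_{L^{\alpha+2}}$. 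Substituting this into the previous display and collecting terms, I would reduce the claim to
\[
\frac{\sigma(2-\sigma)}{2}\,G(\phi_\omega)\le\frac{\beta(\beta-2)}{\alpha+2}\,\|\phi_\omega\|^{\alpha+2}_{L^{\alpha+2}}.
\]
Both coefficients are strictly positive because $0<\sigma<2$ and, in the mass-supercritical regime, $\beta=\frac{d\alpha}{2}>2$; hence it suffices to prove that the ratio $G(\phi_\omega)/\|\phi_\omega\|^{\alpha+2}_{L^{\alpha+2}}$ tends to $0$ as $\omega\to+\infty$, uniformly over $\phi_\omega\in\mathcal G_\omega$.

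To exploit that $G$ is of lower order (as $\sigma<2$), I would rescale. Setting $\psi_\omega(y):=\omega^{-1/\alpha}\phi_\omega(\omega^{-1/2}y)$, a direct computation from \eqref{ell-equ-att} shows that $\psi_\omega$ solves
\[
-\Delta\psi_\omega+\psi_\omega-\varepsilon\,|y|^{-\sigma}\psi_\omega-|\psi_\omega|^\alpha\psi_\omega=0,\qquad \varepsilon:=\omega^{\frac{\sigma-2}{2}},
\]
and that $\psi_\omega$ minimizes the corresponding rescaled action over its Nehari manifold. The two relevant quantities scale as $G(\phi_\omega)=\omega^{\frac2\alpha-\frac d2+\frac\sigma2}G(\psi_\omega)$ and $\|\phi_\omega\|^{\alpha+2}_{L^{\alpha+2}}=\omega^{\frac2\alpha-\frac d2+1}\|\psi_\omega\|^{\alpha+2}_{L^{\alpha+2}}$, so that
\[
\frac{G(\phi_\omega)}{\|\phi_\omega\|^{\alpha+2}_{L^{\alpha+2}}}=\varepsilon\,\frac{G(\psi_\omega)}{\|\psi_\omega\|^{\alpha+2}_{L^{\alpha+2}}}.
\]
As $\varepsilon\to0$ when $\omega\to+\infty$, the proof comes down to bounding $G(\psi_\omega)$ from above and $\|\psi_\omega\|_{L^{\alpha+2}}$ from below, uniformly for large $\omega$ and over all ground states.

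For these uniform bounds I would argue as follows. Every $\phi\in\mathcal G_\omega$ satisfies $K_\omega(\phi)=0$, so \eqref{act-fun} forces $\|\phi\|^{\alpha+2}_{L^{\alpha+2}}=\frac{2(\alpha+2)}{\alpha}d(\omega)$, independent of the chosen ground state; in rescaled form $\|\psi_\omega\|^{\alpha+2}_{L^{\alpha+2}}=\frac{2(\alpha+2)}{\alpha}\widetilde d(\varepsilon)$, where $\widetilde d(\varepsilon)$ is the minimal action of the rescaled problem. Testing the rescaled Nehari constraint with the fixed positive radial ground state $R$ of $-\Delta R+R-|R|^\alpha R=0$ and scaling it down onto the manifold (exactly as in Lemma \ref{lem-non-empty-K-ome} and the first step of Lemma \ref{lem-non-emp-M-ome}) gives $\widetilde d(\varepsilon)\le\widetilde d(0)=\frac{\alpha}{2(\alpha+2)}\|R\|^{\alpha+2}_{L^{\alpha+2}}$, hence the uniform upper bound $\|\psi_\omega\|^{\alpha+2}_{L^{\alpha+2}}\le\|R\|^{\alpha+2}_{L^{\alpha+2}}$. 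On the other hand, the Nehari identity reads $\|\nabla\psi_\omega\|^2_{L^2}-\varepsilon G(\psi_\omega)+\|\psi_\omega\|^2_{L^2}=\|\psi_\omega\|^{\alpha+2}_{L^{\alpha+2}}$, and Hardy's inequality \eqref{hardy} with Young's inequality absorbs $\varepsilon G(\psi_\omega)\le\frac12\|\nabla\psi_\omega\|^2_{L^2}+C\varepsilon^{\frac{2}{2-\sigma}}\|\psi_\omega\|^2_{L^2}$ for all small $\varepsilon$. This yields $\|\psi_\omega\|^2_{H^1}\lesssim\|\psi_\omega\|^{\alpha+2}_{L^{\alpha+2}}\le\|R\|^{\alpha+2}_{L^{\alpha+2}}$, a uniform $H^1$-bound; feeding it back into Hardy bounds $G(\psi_\omega)$ uniformly, while combining it with Sobolev's embedding $\|\psi_\omega\|^{2}_{L^{\alpha+2}}\lesssim\|\psi_\omega\|^2_{H^1}$ and the Nehari identity gives $\|\psi_\omega\|_{L^{\alpha+2}}\ge c_0>0$. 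Consequently $G(\psi_\omega)/\|\psi_\omega\|^{\alpha+2}_{L^{\alpha+2}}$ is uniformly bounded, the reduced inequality holds once $\varepsilon$ is small enough, and choosing the associated threshold $\omega_0>-\mu_1$ finishes the argument.

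The main obstacle is precisely the uniformity of these bounds across the whole set $\mathcal G_\omega$ and in $\omega$: a priori $\mathcal G_\omega$ need not be a singleton, so I cannot lean on a single limiting profile. What makes everything uniform is that the constrained minimal value pins $\|\phi\|^{\alpha+2}_{L^{\alpha+2}}$ to the same number for every ground state, after which the Nehari identity together with the Hardy/Young absorption (available exactly because $\sigma<2$) controls all remaining norms by this one quantity. I expect the comparison $\widetilde d(\varepsilon)\le\widetilde d(0)$ and the absorption of $\varepsilon G$ into the gradient term to be the technical heart, whereas the final matching of coefficients is the elementary computation performed above.
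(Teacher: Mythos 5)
Your proof is correct, and it takes a genuinely different route from the paper, which in fact gives no proof at all: it only cites Fukuizumi--Ohta \cite{FO-poten} and refers the reader to Section 2 of that paper. Your opening move is the same as theirs, namely the rescaling $\psi_\omega(y)=\omega^{-1/\alpha}\phi_\omega(\omega^{-1/2}y)$, which turns the attractive potential into an $O(\varepsilon)$ perturbation, $\varepsilon=\omega^{(\sigma-2)/2}$, of the free equation $-\Delta\psi+\psi=|\psi|^{\alpha}\psi$. But the conclusions are reached differently. The cited proof establishes strong $H^1$ convergence of the rescaled ground states to the free ground state $R$ as $\omega\to\infty$ (a compactness statement), and then passes to the limit in the second variation, whose limiting value is $\frac{\beta(2-\beta)}{\alpha+2}\|R\|^{\alpha+2}_{L^{\alpha+2}}<0$ precisely because $\beta>2$. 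You instead reduce the claim, via the Pohozaev identity $Q(\phi_\omega)=0$, to the single scalar inequality $\frac{\sigma(2-\sigma)}{2}G(\phi_\omega)\le\frac{\beta(\beta-2)}{\alpha+2}\|\phi_\omega\|^{\alpha+2}_{L^{\alpha+2}}$, and then prove only uniform bounds: the identity \eqref{act-fun} pins $\|\phi_\omega\|^{\alpha+2}_{L^{\alpha+2}}=\frac{2(\alpha+2)}{\alpha}d(\omega)$ for every ground state, the comparison $\widetilde d(\varepsilon)\le\widetilde d(0)$ (obtained by scaling $R$ back onto the perturbed Nehari manifold) gives $\|\psi_\omega\|^{\alpha+2}_{L^{\alpha+2}}\le\|R\|^{\alpha+2}_{L^{\alpha+2}}$, and the Hardy--Young absorption $\varepsilon G(\psi_\omega)\le\frac12\|\nabla\psi_\omega\|^2_{L^2}+C\varepsilon^{\frac{2}{2-\sigma}}\|\psi_\omega\|^2_{L^2}$ then yields a uniform $H^1$ bound, hence a uniform bound on $G(\psi_\omega)$ and, via Sobolev, the lower bound $\|\psi_\omega\|_{L^{\alpha+2}}\ge c_0>0$, so that $G(\phi_\omega)/\|\phi_\omega\|^{\alpha+2}_{L^{\alpha+2}}=\varepsilon\, G(\psi_\omega)/\|\psi_\omega\|^{\alpha+2}_{L^{\alpha+2}}\to 0$ uniformly over $\mathcal{G}_\omega$. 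All the scaling exponents and absorptions you compute check out, and the restriction $\frac{4}{d}<\alpha<\frac{4}{d-2}$ guarantees both $\beta>2$ and the Sobolev embedding you invoke. What your approach buys is a self-contained, compactness-free argument that handles the possible non-uniqueness of ground states automatically (all bounds depend on $\mathcal{G}_\omega$ only through $d(\omega)$); what it gives up is the limiting profile $\psi_\omega\to R$, which the convergence proof of \cite{FO-poten} provides but which is not needed for this lemma.
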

	We refer the readers to \cite[Section 2]{FO-poten} for the proof of this result.

	\noindent {\it Proof of Theorem $\ref{theo-sha-threshold}$.}
	It follows directly from Proposition $\ref{prop-sharp-threshold}$ and Lemma $\ref{lem-large-omega}$.	
	\hfill $\Box$

	\section{Existence and stability of standing waves}
	\label{S4}
	In this section, we give the proof of Theorem $\ref{theo-exi-min-mas-sub}$ and $\ref{theo-exi-gro-mas-cri}$.
	
	\noindent {\it Proof of Theorem $\ref{theo-exi-min-mas-sub}$.} The proof is divided in several steps. 
		
	\noindent {\bf Step 1.} We will show that the minimizing problem \eqref{min-prob-a} is well-defined and there exists $C>0$ such that $I(a) \leq -C <0$. Indeed, let $v \in H^1$ be such that $\|v\|^2_{L^2}=a$. By Hardy's inequality and Young's inequality with $0<\sigma<2$ (see \eqref{har-ine-app}), we have for any $\varep>0$,
\[
G(v) \leq \varep \|\nabla v\|^2_{L^2} + C(\varep) \|v\|^2_{L^2} = \varep \|\nabla v\|^2_{L^2} + C(\varep) a.
\]
	By the Gagliardo-Nirenberg inequality,
	\[
	\|v\|^{\alpha+2}_{L^{\alpha+2}} \lesssim \|\nabla v\|^{\frac{d\alpha}{2}}_{L^2} \|v\|^{\frac{4-(d-2)\alpha}{2}}_{L^2}.
	\]
	We next apply the Young's inequality with the fact $0<\frac{d\alpha}{2}<2$ to get for any $\varep>0$,
	\begin{align} \label{gag-nir-ine-app}
	\frac{1}{\alpha+2} \|v\|^{\alpha+2}_{L^{\alpha+2}} \leq \frac{\varep}{2} \|\nabla v\|^2_{L^2} + C(\varep, \alpha,a).
	\end{align}
	This shows that for any $\varep>0$, there exists $C(\varep,\alpha,a)>0$ such that
	\begin{align} \label{est-ene}
	E(v) \geq \left(\frac{1}{2}-\varep\right) \|\nabla v\|^2_{L^2} - C(\varep, \alpha,a).
	\end{align}
	By choosing $0<\varep<\frac{1}{2}$, we see that $E(v) \geq -C(\varep, \alpha,a)$. Thus the minimizing problem \eqref{min-prob-a} is well-defined. Let $v^\lambda$ be as in \eqref{scaling}. It is easy to check that $\|v^\lambda\|^2_{L^2}= \|v\|^2_{L^2}= a$ and
	\[
	E(v^\lambda) = \frac{\lambda^2}{2} \|\nabla v\|^2_{L^2} - \frac{\lambda^\sigma}{2} G(v) - \frac{\lambda^\beta}{\alpha+2} \|v\|^{\alpha+2}_{L^{\alpha+2}},
	\]
	where $\beta$ is as in \eqref{def-bet}.	Since $0<\sigma<2$ and $0<\beta<2$, we can find $\lambda_0>0$ small enough so that $E(v^{\lambda_0}) <0$. Taking $C=-E(v^{\lambda_0})>0$, we obtain that $I(a) \leq -C <0$. 
	
	\noindent {\bf Step 2.} We will show that $\mathcal{N}(a) \ne \emptyset$. Let $(v_n)_{n\geq 1}$ be a minimizing sequence for $I(a)$, i.e. $\|v_n\|^2_{L^2}=a$ for all $n\geq 1$ and $E(v_n) \rightarrow I(a)$ as $n\rightarrow \infty$. By the same argument as in the proof of Lemma $\ref{lem-non-emp-M-ome}$, we may assume that $(v_n)_{n\geq 1}$ is a radially symmetric and radially decreasing sequence. Since $E(v_n) \rightarrow I(a)$ as $n\rightarrow \infty$, there exists $C>0$ such that $E(v_n) \leq I(a) +C$ for any $n \geq 1$. By \eqref{est-ene},
	\[
	\left(\frac{1}{2}-\varep\right) \|\nabla v_n\|^2_{L^2} \leq E(v_n) + C(\varep, \alpha,a) \leq I(a) + C(\varep, \alpha,a).
	\]
	Taking $0<\varep<\frac{1}{2}$, we infer that $(v_n)_{n\geq 1}$ is a bounded sequence in $H^1_{\text{rd}}$. Thanks to \eqref{com-emb}, there exist $v\in H^1$ and a subsequence still denoted by $(v_n)_{n\geq 1}$ such that $v_n \rightharpoonup v$ weakly in $H^1$ and $v_n \rightarrow v$ strongly in $L^q$ for any $2<q<\frac{2d}{d-2}$ if $d\geq 3$ ($2<q<\infty$ if $d=1,2$).
	
	Since $G(v_n) \rightarrow G(v)$ as $n\rightarrow \infty$ (see again the proof of Lemma $\ref{lem-non-emp-M-ome}$), we see that $v\ne 0$. In fact, assume by contradiction that $v \equiv 0$. Since $v_n \rightharpoonup 0$ weakly in $H^1$, $v_n \rightarrow 0$ strongly in $L^{\alpha+2}$ and $G(v_n) \rightarrow 0$ as $n\rightarrow \infty$, we learn from Step 1 that
	\[
	0 \leq \liminf_{n\rightarrow \infty} \frac{1}{2}\|\nabla v_n\|^2_{L^2} = \liminf_{n\rightarrow \infty} E(v_n)+\frac{G(v_n)}{2} +\frac{1}{\alpha+2} \|v_n\|^{\alpha+2}_{L^{\alpha+2}} = I(a) \leq -C <0
	\]
	which is a contradiction. We also have that
	\begin{align} \label{est-ene-v-1}
	E(v) \leq \liminf_{n\rightarrow \infty} E(v_n) = I(a).
	\end{align}
	
	We next show that the minimizing problem \eqref{min-prob-a} is attained by $v$. To see this, we write
	\[
	v_n(x) = v(x) + r_n(x),
	\] 
	where $r_n \rightharpoonup 0$ weakly in $H^1$ and $r_n \rightarrow 0$ strongly in $L^q$ with $2<q<\frac{2d}{d-2}$ if $d\geq 3$ ($2<q<\infty$ if $d=1,2$). We have the following expansions:
	\begin{align}
	\|v_n\|^2_{L^2} &= \|v\|^2_{L^2} + \|r_n\|^2_{L^2} + o_n(1), \label{L2-expan} \\
	\|\nabla v_n\|^2_{L^2} &= \|\nabla v\|^2_{L^2} + \|\nabla r_n\|^2_{L^2} + o_n(1), \label{H1-expan} \\
	\|v_n\|^{\alpha+2}_{L^{\alpha+2}} &= \|v\|^{\alpha+2}_{L^{\alpha+2}} + \|r_n\|^{\alpha+2}_{L^{\alpha+2}} + o_n(1), \label{Lalpha-expan} \\
	G(v_n) &= G(v) + G(r_n) + o_n(1), \label{G-expan}
	\end{align}
	as $n\rightarrow \infty$. In particular, we have
	\begin{align} \label{ene-expan}
	E(v_n) = E(v) + E(r_n) + o_n(1)
	\end{align}
	as $n\rightarrow \infty$. The expansions \eqref{L2-expan}, \eqref{H1-expan} and \eqref{Lalpha-expan} are standard. We thus only prove \eqref{G-expan}. To see this, we write
	\[
	G(v_n) = G(v) + G(r_n) + 2 c  \int |x|^{-\sigma} \text{Re }(v(x) \overline{r}_n(x)) dx.
	\]
	We will show that
	\begin{align} \label{G-expan-proof}
	\int |x|^{-\sigma} v(x) \overline{r}_n(x) dx \rightarrow 0
	\end{align}
	as $n\rightarrow \infty$. Without loss of generality, we may assume that $v$ is continuous and compactly supported. 
	
	In the case $0 \notin \text{supp}(v)$, we have
	\[
	\left| \int |x|^{-\sigma} v(x) \overline{r}_n(x) dx \right| \leq \||\cdot|^{-\sigma} v\|_{L^\infty} \int_{\text{supp}(v)} |r_n(x)| dx \rightarrow 0
	\]
	as $n\rightarrow \infty$. Here we have used the fact $r_n\rightharpoonup 0$ weakly in $H^1$ and the compact embedding $H^1 \hookrightarrow L^1_{\text{loc}}$ to show $r_n \rightarrow 0$ strongly in $L^1_{\text{loc}}$. 
	
	In the case $0 \in \text{supp}(v)$, let $\varep>0$. For $\eta>0$ small to be chosen later, we estimate
	\[
	\left| \int |x|^{-\sigma} v(x) \overline{r}_n(x) dx \right| \leq \left[ \int_{B(0,\eta)} + \int_{\text{supp}(v) \backslash B(0,\eta)}\right] |x|^{-\sigma} |v(x)| |r_n(x)| dx =: J_1 + J_2.
	\]
	The term $J_2$ is treated as above, and there exists $n_0 \in \N$ such that for $n\geq n_0$, $J_2 <\frac{\varep}{2}$. For $J_1$, we use the Cauchy-Schwarz inequality and Hardy's inequality to have
	\begin{align*}
	J_1 &\leq \left(\int_{B(0,\eta)} |x|^{-\sigma} |v(x)|^2 dx \right)^{\frac{1}{2}} \left(\int |x|^{-\sigma} |r_n(x)|^2 dx \right)^{\frac{1}{2}} \\
	&\lesssim \left(\int_{B(0,\eta)} |x|^{-\sigma} |v(x)|^2 dx \right)^{\frac{1}{2}} \|r_n\|_{H^1}.
	\end{align*}
	Since $(r_n)_{n\geq 1}$ is bounded in $H^1$ and $v\in H^1$, the dominated convergence implies that for $\eta>0$ small enough, $J_1 \leq \frac{\varep}{2}$. It follows that for $n\geq n_0$, 
	\[
	\left| \int |x|^{-\sigma} v(x) \overline{r}_n(x) dx \right| <\varep.
	\]
	Collecting the above two cases, we prove \eqref{G-expan-proof}. 
	
	We now set $\tilde{v} = \lambda v$ and $\tilde{r}_n = \lambda_n r_n$, where
	\[
	\lambda:= \frac{\sqrt{a}}{\|v\|_{L^2}} \geq 1, \quad \lambda_n := \frac{\sqrt{a}}{\|r_n\|_{L^2}} \geq 1.
	\]
	It is obvious that $\|\tilde{v}\|^2_{L^2} = \|\tilde{r}_n\|^2_{L^2} = a$, hence $E(\tilde{v}) \geq I(a)$ and $E(\tilde{r}_n) \geq I(a)$. We also have that
	\[
	E(\tilde{v}) = \frac{\lambda^2}{2} \|\nabla v\|^2_{L^2} - \frac{\lambda^2}{2} G(v) - \frac{\lambda^{\alpha+2}}{\alpha+2} \|v\|^{\alpha+2}_{L^{\alpha+2}}.
	\]
	This implies that
	\[
	E(v) = \frac{E(\tilde{v})}{\lambda^2} + \frac{\lambda^\alpha-1}{\alpha+2} \|v\|^{\alpha+2}_{L^{\alpha+2}},
	\]
	and
	\[
	E(r_n) = \frac{E(\tilde{r}_n)}{\lambda_n^2} + \frac{\lambda_n^\alpha-1}{\alpha+2} \|r_n\|^{\alpha+2}_{L^{\alpha+2}} \geq \frac{E(\tilde{r}_n)}{\lambda_n^2}.
	\]
	Using \eqref{ene-expan}, we get
	\begin{align*}
	E(v_n) &\geq \frac{E(\tilde{v})}{\lambda^2} + \frac{\lambda^\alpha-1}{\alpha+2} \|v\|^{\alpha+2}_{L^{\alpha+2}} + \frac{E(\tilde{r}_n)}{\lambda_n^2} + o_n(1) \\
	&\geq \frac{\|v\|^2_{L^2}}{a} I(a) +\frac{\lambda^\alpha-1}{\alpha+2} \|v\|^{\alpha+2}_{L^{\alpha+2}} + \frac{\|r_n\|^2_{L^2}}{a} I(a) + o_n(1) \\
	&= \frac{I(a)}{a} \left( \|v\|^2_{L^2} + \|r_n\|^2_{L^2}\right) + \frac{\lambda^\alpha-1}{\alpha+2} \|v\|^{\alpha+2}_{L^{\alpha+2}} + o_n(1).
	\end{align*}
	Taking $n\rightarrow \infty$ and using \eqref{L2-expan} and the fact $v\ne 0$, we  have that
	\[
	I(a) \geq I(a) + \frac{\lambda^\alpha-1}{\alpha+2} \|v\|^{\alpha+2}_{L^{\alpha+2}}.
	\]
	We thus obtain $\lambda \leq 1$, hence $\lambda=1$ hence $\|v\|^2_{L^2} = \|v_n\|^2_{L^2} = a$. This implies that
	\begin{align} \label{est-ene-v-2}
	E(v) \geq I(a).
	\end{align}
	By \eqref{est-ene-v-1} and \eqref{est-ene-v-2}, we obtain $E(v)=I(a)$ and $\|v\|^2=a$ which implies that the minimizing problem \eqref{min-prob-a} is attained by $v$ or $\mathcal{N}(a) \ne \emptyset$.

	Moreover, we also have that $v_n \rightarrow v$ strongly in $H^1$. In fact, by \eqref{L2-expan} and $\|v\|^2_{L^2}= \|v_n\|^2_{L^2}=a$, we get $\|r_n\|_{L^2} \rightarrow 0$ as $n\rightarrow \infty$. Since $r_n \rightharpoonup 0$ weakly in $H^1$, by the uniqueness of the weak limit, $r_n \rightarrow 0$ strongly in $L^2$. By \eqref{ene-expan}, $\lim_{n\rightarrow \infty} E(v_n) = I(a)$, $E(v) = I(a)$, the fact $G(r_n) \rightarrow 0$ and $v_n \rightarrow 0$ strongly in $L^{\alpha+2}$, we see that $\lim_{n\rightarrow \infty} \|\nabla r_n\|^2_{L^2} =0$. This implies that $\lim_{n\rightarrow \infty} \|\nabla v_n\| = \|\nabla v\|_{L^2}$ which together with $v_n \rightharpoonup v$ weakly in $H^1$ imply $v_n \rightarrow v$ strongly in $\dot{H}^1$. Therefore, $v_n \rightarrow v$ strongly in $H^1$. 
	
	\noindent {\bf Step 3.} Let $v \in H^1$ be a complex valued minimizer for $I(a)$. A standard elliptic regularity bootstrap ensures that $v$ is of class $C^1$. By the  diamagnetic inequality, we see that $|v|$ is also a minimizer for $I(a)$. Moreover, by the Euler-Lagrange equation and using the strong maximum principle, we get $|v|>0$ and thus $v \in C^1(\R^d, \C \backslash \{0\})$. Since $E(v) = E(|v|) = I(a)$, it follows that $\|\nabla(|v|)\|^2_{L^2} = \|\nabla v\|^2_{L^2}$. Set $w(x) := \frac{v(x)}{|v(x)|}$. It follows from the fact $|w(x)|^2=1$ for all $x \in \R^d$ that $\text{Re }(\overline{w} (x) \nabla w(x)) =0$,
	\[
	\nabla v(x) = \nabla (|v|)(x) w(x) + |v(x)| \nabla w(x),
	\]
	and thus $|\nabla v(x)|^2 = |\nabla(|v|)(x)|^2 + |v(x)|^2 |\nabla w(x)|^2$ for all $x\in \R^d$. We also have from this and $\|\nabla (|v|)\|^2_{L^2} = \|\nabla v\|_{L^2}^2$ that
	\[
	\int |v(x)|^2 |\nabla w(x)|^2 dx =0
	\]
	which implies $|\nabla w(x)|=0$ for all $x\in \R^d$. Hence $w$ is a constant with $|w|=1$. We infer that there exists $\theta \in \R$ such that $v(x) = e^{i\theta}\phi(x)$, where $\phi(x)= |v(x)|$. We next prove that $\phi$ is radially symmetric and radially decreasing. Let $\phi^*$ be the symmetric rearrangement of $\phi$. It is well-known (see e.g. \cite{Hajaiej}) that
	\[
	\int |x|^{-\sigma} |\phi^*(x)|^2 dx > \int |x|^{-\sigma} |\phi(x)|^2 dx \text{ unless } \phi=\phi^*.
	\]
	By the Polya-Szego's inequality $\|\nabla \phi^*\|_{L^2} \leq \|\nabla \phi\|_{L^2}$ and the fact $\|\phi^*\|_{L^{\alpha+2}} = \|\phi\|_{L^{\alpha+2}}$, it follows that if $\phi \ne \phi^*$, then $E(\phi^*)<E(\phi)$ and $\|\phi^*\|_{L^2}^2=\|\phi\|_{L^2}^2=a$ which contradicts $I(a)= E(v)= E(\phi)$. Therefore, $\phi$ is radially symmetric and radially decreasing.
	
	\noindent {\bf Step 4.} We will show that $\mathcal{N}(a)$ is orbitally stable under the flow of the focusing problem \eqref{NLS-att}. To see this, we argue by contradiction. Note that the existence of global solutions is proved in Theorem $\ref{theo-gwp-1}$. Suppose that there exist sequences $(u_{0,n})_{n\geq 1} \subset H^1$, $(t_n)_{n\geq 1} \subset \R$ and $\varep_0>0$ such that for all $n\geq 1$,
	\begin{align} \label{sta-pro-1}
	\inf_{v\in S_a} \|u_{0,n} - v\|_{H^1} <\frac{1}{n},
	\end{align}
	and
	\begin{align} \label{sta-pro-2}
	\inf_{v\in S_a} \|u_n(t_n) - v\|_{H^1} \geq \varep_0,
	\end{align}
	where $u_n(t)$ is the solution to \eqref{NLS-att} with initial data $u_{0,n}$. By \eqref{sta-pro-1}, we see that for each $n\geq 1$, there exists $v_n \in S_a$ such that
	\begin{align} \label{sta-pro-3}
	\|u_{0,n} - v_n\|_{H^1} <\frac{2}{n}.
	\end{align}
	We thus have a sequence $(v_n)_{n\geq 1} \subset S_a$. By Step 2, there exists $v\in S_a$ such that
	\begin{align} \label{sta-pro-4}
	\lim_{n\rightarrow \infty} \|v_n -v\|_{H^1} =0.
	\end{align}
	By \eqref{sta-pro-3} and \eqref{sta-pro-4}, we have $u_{0,n} \rightarrow v$ in $H^1$ as $n\rightarrow \infty$. It follows that
	\[
	\lim_{n\rightarrow \infty} \|u_{0,n}\|^2_{L^2} = \|v\|^2_{L^2} =a, \quad \lim_{n\rightarrow \infty} E(u_{0,n}) = E(v) = I(a).
	\]
	Thanks to the conservation of mass and energy,
	\[
	\lim_{n\rightarrow \infty} \|u_n(t_n)\|^2_{L^2} = a, \quad \lim_{n\rightarrow \infty} E(u_n(t_n)) = I(a).
	\]
	By the same argument as in Step 2, we prove as well that there exists $\tilde{v} \in S_a$ such that up to a subsequence, $(u_n(t_n))_{n\geq 1}$ converges strongly to $\tilde{v}$ in $H^1$ which contradicts with \eqref{sta-pro-2}. The proof of Theorem $\ref{theo-exi-min-mas-sub}$ is now complete.
	\hfill $\Box$	

	\noindent {\it Proof of Theorem $\ref{theo-exi-gro-mas-cri}$.} The proof is similar to the one of Theorem $\ref{theo-exi-min-mas-sub}$ except \eqref{gag-nir-ine-app} which becomes
	\begin{align} \label{gag-nir-ine-app-mas-cri}
	\frac{1}{\frac{4}{d} + 2} \|v\|^{\frac{4}{d}+2}_{L^{\frac{4}{d} +2}} \leq \frac{1}{2} \left( \frac{\|v\|_{L^2}}{\|Q\|_{L^2}}\right)^{\frac{4}{d}} \|\nabla v\|^2_{L^2}.
	\end{align}
	Thus \eqref{est-ene} is replaced by
	\begin{align*} 
	E(v) \geq \frac{1}{2} \left(1- \left( \frac{\|v\|_{L^2}}{\|Q\|_{L^2}}\right)^{\frac{4}{d}} - \varep \right) \|\nabla v\|^2_{L^2} - C(\varep) a.
	\end{align*}
	Since $\|v\|_{L^2} <\|Q\|_{L^2}$, we choose $0<\varep< 1- \left( \frac{\|v\|_{L^2}}{\|Q\|_{L^2}}\right)^{\frac{4}{d}}$ to get the lower bound for $E(v)$. The rest of the proof follows the same lines as in the proof of Theorem $\ref{theo-exi-min-mas-sub}$. Note that the existence of global solutions is given in Theorem $\ref{theo-gwp-1}$.
	\hfill $\Box$

	\section{Blow-up behavior of standing waves}
	\label{S5}
	In this subsection, we will prove the non-existence of minimizers for $I(a)$ with $a \geq a^*$ in the mass-critical case as well as the blow-up behavior of minimizers for $I(a)$ as $a \nearrow a^*$. 	
	
	\noindent {\it Proof of Theorem $\ref{theo-blo-up-behavior}$.} The proof is done by several steps.
	
	\noindent {\bf Step 1.}	We first show that there is no minimizer for $I(a)$ with $a\geq a^*$. To do this, we pick $\varphi \in C^\infty_0(\R^d)$ satisfying $0 \leq \varphi \leq 1$, $\varphi = 1$ on $|x| \leq 1$ and denote
	\[
	v_\tau(x) := A_\tau \tau^{\frac{d}{2}} \varphi(x) Q_0(\tau x), \quad \tau >0,
	\]
	where $Q_0 = \frac{Q}{\|Q\|_{L^2}}$ and $A_\tau >0$ is such that $\|v_\tau\|^2_{L^2} =a$ for all $\tau>0$. It follows that
	\[
	a A_\tau^{-2} = \int \varphi^2(\tau^{-1} x) Q_0^2(x) dx = 1 + \int (1-\varphi^2(\tau^{-1} x)) Q_0^2(x) dx
	\]
	due to $\|Q_0\|^2_{L^2}=1$. Since $Q_0, |\nabla Q_0| =O(e^{- \delta|x|})$ for some $\delta>0$ as $|x| \rightarrow \infty$, we see that for $\tau$ sufficiently large and $N>0$,
	\[
	\left|\int (1-\varphi^2(\tau^{-1} x)) Q_0^2(x) dx \right| \lesssim \int_{|x|\geq  \tau} e^{-2\delta |x|} dx \lesssim \int_{|x| \geq \tau} |x|^{-d-N} dx \lesssim \tau^{-N}.
	\] 
	This shows that 
	\[
	a A_\tau^{-2} = 1 + O(\tau^{-\infty})
	\]
	as $\tau \rightarrow \infty$. Here $A=O(\tau^{-\infty})$ means that $|A| \leq C \tau^{-N}$ for any $N>0$. We next compute
	\begin{align*}
	\|\nabla v_\tau\|^2_{L^2} = A^2_\tau \left( \int |\nabla \varphi(\tau^{-1} x)|^2 Q_0^2(x) dx \right. &\left. + \ \tau^2 \int \varphi^2(\tau^{-1} x) |\nabla Q_0(x)|^2 dx \right. \\
	&\left. + \ 2\tau \int \text{Re } \left(\varphi(\tau^{-1} x) Q_0(x) \nabla \varphi(\tau^{-1} x) \cdot \nabla Q_0(x) \right) dx \right).
	\end{align*}
	Estimating as above and using the fact $A^2_\tau = a + O(\tau^{-\infty})$ as $\tau \rightarrow \infty$, we get
	\[
	\|\nabla v_\tau\|^2_{L^2} = \tau^2 a \|\nabla Q_0\|^2_{L^2} + O(\tau^{-\infty})
	\]
	as $\tau \rightarrow \infty$. Similarly,
	\begin{align*}
	\int |x|^{-\sigma} |v_\tau(x)|^2 dx & = \tau^\sigma A^2_\tau \int |x|^{-\sigma} \varphi^2(\tau^{-1} x) Q_0^2(x) dx \\
	&= \tau^\sigma a \int |x|^{-\sigma} Q_0^2(x) dx + O(\tau^{-\infty})
	\end{align*}
	and
	\begin{align*}
	\|v_\tau\|^{\frac{4}{d}+2}_{L^{\frac{4}{d}+2}} &=\tau^2 A_\tau^{\frac{4}{d}+2} \int \left(\varphi(\tau^{-1} x) Q_0(x) \right)^{\frac{4}{d}+2} dx \\
	&= \tau^2 a^{\frac{2}{d}+1} \int (Q_0(x))^{\frac{4}{d}+2} dx + O(\tau^{-\infty})
	\end{align*}
	as $\tau \rightarrow \infty$. This implies that
	\begin{align}
	\frac{I(a)}{a} \leq \frac{E(v_\tau)}{a} &= \tau^2 \left( \frac{1}{2} \|\nabla Q_0\|^2_{L^2} - \frac{a^{\frac{2}{d}}}{\frac{4}{d}+2} \|Q_0\|^{\frac{4}{d}+2}_{L^{\frac{4}{d}+2}} \right) - \frac{\tau^\sigma}{2} G(Q_0) + O(\tau^{-\infty}) \nonumber \\
	&= \frac{\tau^2 d}{4} \left( 1- \left( \frac{a}{a^*}\right)^{\frac{2}{d}} \right) - \frac{\tau^\sigma}{2} G(Q_0) + O(\tau^{-\infty})  \nonumber \\
	&= \frac{\tau^2 d}{4} \beta_a - \frac{\tau^\sigma}{2} G(Q_0) + O(\tau^{-\infty}) \label{upperbound-proof}
	\end{align}
	as $\tau \rightarrow \infty$. Here we have used the fact that
	\[
	1= \|Q_0\|^2_{L^2} = \frac{2}{d} \|\nabla Q_0\|^2_{L^2} = \frac{2}{d+2} \|Q\|^{\frac{4}{d}}_{L^2}\|Q_0\|^{\frac{4}{d}+2}_{L^{\frac{4}{d}+2}}
	\]
	which follows from the following Pohozaev's identities
	\[
	\|Q\|^2_{L^2} = \frac{2}{d} \|\nabla Q\|^2_{L^2} = \frac{2}{d+2} \|Q\|^{\frac{4}{d}+2}_{L^{\frac{4}{d}+2}}.
	\]
	We infer from \eqref{upperbound-proof} that for $a\geq a^*$, 
	\[
	\frac{I(a)}{a} \leq \frac{E(v_\tau)}{a} \rightarrow -\infty \text{ as } \tau \rightarrow \infty
	\]
	which shows the non-existence of minimizers for $I(a)$ with $a\geq a^*$. 
	
	\noindent {\bf Step 2.} Let $v_a$ be a non-negative minimizer for $I(a)$ with $0<a<a^*$. We will show that $v_a$ blows up as $a \nearrow a^*$ in the sense of \eqref{blowup}. Assume by contradiction that $(v_a)_{a \nearrow a^*}$ is bounded in $H^1$. We can assume that $v_a$ is radially symmetric and radially decreasing. By the same argument as in the proof of Theorem $\ref{theo-exi-min-mas-sub}$, we show that there exists a minimizer for $I(a^*)$ which is a contradiction.
	
	\noindent {\bf Step 3.} We now claim that there exist two positive constants $m<M$ independent of $a$ such that for $0<a<a^*$,
	\begin{align} \label{energy-estimate}
	-M \beta_a^{-\frac{\sigma}{2-\sigma}} \leq \frac{I(a)}{a} \leq -m \beta_a^{-\frac{\sigma}{2-\sigma}},
	\end{align}
	where $\beta_a$ is as in \eqref{def-bet-a}. To see this, we first show that $\frac{I(a)}{a}$ is a decreasing function in $a$. Indeed, let $0<a \leq b$. We will show that $\frac{I(b)}{b} \leq \frac{I(a)}{a}$. Let $v\in H^1$ be such that $\|v\|^2_{L^2} =a$ and set $\lambda = \sqrt{\frac{b}{a}} \geq 1$. We see that $\|\lambda v\|^2_{L^2} = b$ and
	\begin{align*}
	E(\lambda v) &= \frac{\lambda^2}{2} \|\nabla v\|^2_{L^2} - \frac{\lambda^2}{2} G(v) - \frac{\lambda^{\alpha+2}}{\alpha+2} \|v\|^{\alpha+2}_{L^{\alpha+2}} \\
	&= \lambda^2 E(v) +\frac{\lambda^2(1-\lambda^\alpha)}{\alpha+2} \|v\|^{\alpha+2}_{L^{\alpha+2}} \leq \lambda^2 E(v).
	\end{align*}
	We then have from the definition of $I(b)$ that
	\[
	I(b) \leq E(\lambda v) \leq \lambda^2 E(v).
	\]
	Taking the infimum over all $v \in H^1$ with $\|v\|^2_{L^2} = a$, we obtain $I(b) \leq \frac{b}{a} I(a)$ which shows that $\frac{I(a)}{a}$ is a decreasing function in $a$. Thus, we only need to show \eqref{energy-estimate} for $a$ close to $a^*$. 
	
	We now have from Hardy's inequality and Young's inequality with $0<\sigma<2$ that for any $\varep>0$, 
	\begin{align} \label{young}
	G(v) \leq C \|\nabla v\|^\sigma_{L^2} \|v\|^{2-\sigma}_{L^2} \leq \varep \|\nabla v\|^2_{L^2} + C(\sigma) \varep^{-\frac{\sigma}{2-\sigma}} \|v\|^2_{L^2}.
	\end{align}
	Note that the constant $C$ may change from line to line. The above estimate together with the sharp Gagliardo-Nirenberg inequality \eqref{gag-nir-ine-app-mas-cri} imply that
	\[
	E(v) \geq \frac{1}{2}\left(1- \left(\frac{\|v\|_{L^2}}{\|Q\|_{L^2}}\right)^{\frac{4}{d}} -\varep \right) \|\nabla v\|^2_{L^2} - \frac{C(\sigma)}{2} \varep^{-\frac{\sigma}{2-\sigma}} \|v\|^2_{L^2}.
	\]
	Let $v \in H^1$ be such that $\|v\|^2_{L^2}=a$. It follows that
	\begin{align*}
	E(v) &\geq \frac{1}{2} \left( 1- \left(\frac{a}{a^*}\right)^{\frac{2}{d}} -\varep \right) \|\nabla v\|^2_{L^2} - \frac{C(\sigma)}{2} \varep^{-\frac{\sigma}{2-\sigma}} a \\
	&=\frac{1}{2} \left( \beta_a -\varep \right) \|\nabla v\|^2_{L^2} - \frac{C(\sigma)}{2} \varep^{-\frac{\sigma}{2-\sigma}} a,
	\end{align*}
	where $\beta_a$ is as in \eqref{def-bet-a}. We take $\varep= \frac{1}{2} \beta_a$ and get
	\[
	E(v) \geq - 2^{\frac{-2+2\sigma}{2-\sigma}} C(\sigma) \beta_a^{-\frac{\sigma}{2-\sigma}} a.
	\]
	Taking the infimum over all $v\in H^1$ with $\|v\|^2_{L^2} =a$, we prove the lower bound in \eqref{energy-estimate} with $M= 2^{\frac{-2+2\sigma}{2-\sigma}} C(\sigma)$. To see the upper bound in \eqref{energy-estimate}, we choose $\tau = \lambda \beta_a^{-\frac{1}{2-\sigma}}$ in \eqref{upperbound-proof} with $\lambda>0$. Note that $\tau \rightarrow \infty$ as $a \nearrow a^*$ since $\beta_a \rightarrow 0$ as $a \nearrow a^*$. With this choice, \eqref{upperbound-proof} becomes
	\begin{align} \label{estimate-Ia}
	\frac{I(a)}{a} \leq \left(\frac{\lambda^2 d}{4} - \frac{\lambda^\sigma}{2} G(Q_0) \right) \beta_a^{-\frac{\sigma}{2-\sigma}} + o_{a\nearrow a^*}(1)
	\end{align}
	for any $\lambda>0$. Since $0<\sigma<2$, there exists $\lambda_0 >0$ sufficiently small so that $-2m:=\frac{\lambda_0^2 d}{4} - \frac{\lambda_0^\sigma}{2} G(Q_0) <0$ and $m<M$. Taking $a$ sufficiently close to $a^*$, we prove the upper bound in \eqref{energy-estimate}.
	We also have from \eqref{estimate-Ia} that
	\begin{align} \label{limsup-Ia}
	\limsup_{a\nearrow a^*} \beta_a^{\frac{\sigma}{2-\sigma}} \frac{I(a)}{a} \leq \inf_{\lambda>0} \left( \frac{\lambda^2 d}{4} - \frac{\lambda^\sigma}{2} G(Q_0) \right).
	\end{align}
	
	{\bf Step 4.} Let $v_a$ be a non-negative minimizer for $I(a)$ with $0<a<a^*$. We claim that then there exists $K>1$ independent of $a$ such that for $0<a<a^*$,
	\begin{align} \label{potential-estimate}
	-K \beta_a^{-\frac{\sigma}{2-\sigma}} \leq - \frac{G(v_a)}{a} \leq -\frac{1}{K} \beta_a^{-\frac{\sigma}{2-\sigma}}
	\end{align}
	and
	\begin{align} \label{kinetic-estimate}
	\frac{\|\nabla v_a\|^2_{L^2}}{a} \leq K \beta_a^{-\frac{2}{2-\sigma}}.
	\end{align}
	The upper bound in \eqref{potential-estimate} follows easily from the upper bound in \eqref{energy-estimate} and the fact
	\[
	I(a)=E(v_a) \geq \frac{1}{2} \left(1- \left(\frac{a}{a^*} \right)^{\frac{2}{d}} \right) \|\nabla v_a\|^2_{L^2} - \frac{G(v_a)}{2} \geq - \frac{G(v_a)}{2}.
	\]
	To see the lower bound in \eqref{potential-estimate}, we use again \eqref{young} and the fact $E(v_a)= I(a)<0$ to have that
	\begin{align*}
	-\frac{G(v_a)}{2} \geq E(v_a) - \frac{G(v_a)}{2} &\geq \frac{1}{2} \left( 1- \left(\frac{a}{a^*}\right)^{\frac{2}{d}} - 2\varep\right) \|\nabla v_a\|^2_{L^2} - C(\sigma) \varep^{-\frac{\sigma}{2-\sigma}} a \\
	&= \frac{1}{2} (\beta_a - 2\varep) \|\nabla v_a\|^2_{L^2} - C(\sigma) \varep^{-\frac{\sigma}{2-\sigma}} a.
	\end{align*}
	We take $\varep = \frac{1}{4}\beta_a$ and get 
	\[
	-\frac{G(v_a)}{a} \geq \frac{1}{4} \beta_a \|\nabla v_a\|^2_{L^2}-2^{\frac{2+\sigma}{2-\sigma}} C(\sigma) \beta_a^{-\frac{\sigma}{2-\sigma}} \geq -2^{\frac{2+\sigma}{2-\sigma}} C(\sigma) \beta_a^{-\frac{\sigma}{2-\sigma}}.
	\]
	The above estimate also gives \eqref{kinetic-estimate}.
	
	\noindent {\bf Step 5.} We finally show the blow-up behavior of minimizers for $I(a)$ as $a \nearrow a^*$. To this end, we denote
	\begin{align} \label{define-wa}
	w_a(x) := \beta_a^{\frac{d}{2(2-\sigma)}} v_a \Big(\beta_a^{\frac{1}{2-\sigma}} x\Big).
	\end{align}
	It follows that
	\[
	\|w_a\|^2_{L^2} = \|v_a\|^2_{L^2} = a
	\]
	and
	\[
	\|\nabla w_a\|^2_{L^2} = \beta_a^{\frac{2}{2-\sigma}} \|\nabla v_a\|^2_{L^2} \leq K a
	\]
	by \eqref{kinetic-estimate}. This implies that $(w_a)_{a \nearrow a^*}$ is a bounded sequence in $H^1$. Up to a subsequence, $w_a \rightharpoonup w$ weakly in $H^1$ and pointwise almost everywhere. By the lower continuity of the weak limit, 
	\begin{align} \label{mass-w}
	\|w\|^2_{L^2} \leq \liminf_{a \nearrow a^*} \|w_a\|^2_{L^2} = a^*.
	\end{align}
	By \eqref{potential-estimate} and the weak continuity of the potential energy (see e.g. \cite[Theorem 11.4]{LL}),
	\[
	\frac{1}{K} \leq \frac{\beta_a^{\frac{\sigma}{2-\sigma}} G(v_a)}{a} = \frac{G(w_a)}{a} \rightarrow \frac{G(w)}{a^*}
	\]
	as $a \nearrow a^*$. This shows that $w \ne 0$. 
	
	We next show that $w$ is actually a non-negative optimizer for the sharp Gagliardo-Nirenberg inequality \eqref{sharp-GN-inequality}. In fact, we have from \eqref{energy-estimate}, \eqref{potential-estimate} and the fact $0<\sigma <2$ that
	\begin{align} \label{convergence-wa}
	0 &= \lim_{a \nearrow a^*} \beta_a^{\frac{2}{2-\sigma}} \left( \frac{E(v_a)}{a} + \frac{G(v_a)}{2a} \right) \nonumber \\
	&= \lim_{a\nearrow a^*} \frac{\beta_a^{\frac{2}{2-\sigma}}}{a} \left( \frac{1}{2} \|\nabla v_a\|^2_{L^2} - \frac{1}{\frac{4}{d}+2} \|v_a\|^{\frac{4}{d}+2}_{L^{\frac{4}{d}+2}} \right) \nonumber \\
	&= \lim_{a \nearrow a^*} \frac{1}{a} \left(\frac{1}{2} \|\nabla w_a\|^2_{L^2} - \frac{1}{\frac{4}{d}+2} \|w_a\|^{\frac{4}{d}+2}_{L^{\frac{4}{d}+2}}  \right).
	\end{align}
	Since $w_a \rightharpoonup w$ weakly in $H^1$, we have 
	\begin{align}
	\|w_a\|^2_{L^2} &= \|w\|^2_{L^2} + \|w_a - w\|^2_{L^2} + o_{a\nearrow a^*} (1), \label{mass-expansion}\\
	\|\nabla w_a\|^2_{L^2} &= \|\nabla w\|^2_{L^2} + \|\nabla (w_a - w)\|^2_{L^2} + o_{a\nearrow a^*} (1). \label{kinetic-expansion}
	\end{align}
	Since $w_a \rightarrow w$ pointwise almost everywhere and $(w_a)_{a \nearrow a^*}$ is bounded in $H^1$, the Brezis-Lieb's lemma (see e.g. \cite{BL}) implies that
	\begin{align} \label{nonlinear-expansion}
	\|w_a\|^{\frac{4}{d}+2}_{L^{\frac{4}{d}+2}} = \|w\|^{\frac{4}{d}+2}_{L^{\frac{4}{d}+2}} + \|w_a-w\|^{\frac{4}{d}+2}_{L^{\frac{4}{d}+2}} + o_{a \nearrow a^*} (1).
	\end{align}
	It follows from \eqref{convergence-wa}, \eqref{kinetic-estimate} and \eqref{nonlinear-expansion} that
	\begin{align} \label{convergence-wa-w}
	\frac{1}{2} \|\nabla w\|^2_{L^2} - \frac{1}{\frac{4}{d}+2} \|w\|^{\frac{4}{d}+2}_{L^{\frac{4}{d}+2}} + \frac{1}{2} \|\nabla (w_a-w)\|^2_{L^2} - \frac{1}{\frac{4}{d}+2} \|w_a-w\|^{\frac{4}{d}+2}_{L^{\frac{4}{d}+2}} = o_{a\nearrow a^*}(1).
	\end{align}
	Using the sharp Gagliardo-Nirenberg inequality and \eqref{mass-w}, we see that
	\[
	\frac{1}{2} \|\nabla w\|^2_{L^2} - \frac{1}{\frac{4}{d}+2} \|w\|^{\frac{4}{d}+2}_{L^{\frac{4}{d}+2}} \geq \frac{1}{2} \left(1-\left( \frac{\|w\|^2_{L^2}}{a^*}\right)^{\frac{2}{d}}\right) \|\nabla w\|^2_{L^2} \geq 0
	\]
	and similarly,
	\[
	\frac{1}{2} \|\nabla (w_a-w)\|^2_{L^2} - \frac{1}{\frac{4}{d}+2} \|w_a-w\|^{\frac{4}{d}+2}_{L^{\frac{4}{d}+2}} \geq \frac{1}{2} \left(1-\left( \frac{\|w_a-w\|^2_{L^2}}{a^*}\right)^{\frac{2}{d}}\right)  \|\nabla (w_a -w)\|^2_{L^2} \geq 0.
	\]
	We infer from the above inequalities, \eqref{convergence-wa-w} and the fact $w \ne 0$ that
	\[
	\frac{1}{2} \|\nabla w\|^2_{L^2} - \frac{1}{\frac{4}{d}+2} \|w\|^{\frac{4}{d}+2}_{L^{\frac{4}{d}+2}}=0, \quad \|w\|^2_{L^2}=a^*, \quad \lim_{a \nearrow a^*} \|\nabla(w_a -w)\|^2_{L^2}=0.
	\]
	This shows that $w$ is a non-negative optimizer for the sharp Gagliardo-Nirenberg inequality \eqref{sharp-GN-inequality}. Moreover, the later limit together with $w_a \rightharpoonup w$ weakly in $H^1$ imply that $w_a \rightarrow w$ strongly in $H^1$. Since $Q$ is the unique optimizer for the sharp Gagliardo-Nirenberg inequality up to translations and dialations (see e.g. \cite{Weinstein}), we conclude that $w(x) = \beta_0 Q(\lambda_0 x - x_0)$ for some $\beta_0, \gamma_0>0$ and $x_0 \in \R^d$. Since $\|w\|^2_{L^2} = a^* = \|Q\|^2_{L^2}$, we infer that $\beta_0 = \lambda_0^{\frac{d}{2}}$, hence
	\[
	w(x) = \lambda_0^{\frac{d}{2}} Q(\lambda_0 x - x_0)
	\]
	for some $\lambda_0>0$ and $x_0 \in \R^d$. It remains to determine $\lambda_0$ and $x_0$ as follows. 
	
	We have from \eqref{define-wa} that
	\[
	G(v_a) = \beta_a^{-\frac{\sigma}{2-\sigma}} G(w_a)
	\]
	and by the sharp Gagliardo-Nirenberg inequality,
	\begin{align*}
	\frac{1}{2} \|\nabla v_a\|^2_{L^2} - \frac{1}{\frac{4}{d}+2} \|v_a\|^{\frac{4}{d}+2}_{L^{\frac{4}{d}+2}} &\geq \frac{1}{2} \left( 1-\left(\frac{a}{a^*}\right)^{\frac{2}{d}} \right) \|\nabla v_a\|^2_{L^2}  \\
	&= \frac{1}{2} \beta_a \|\nabla v_a\|^2_{L^2} \\
	&= \frac{1}{2} \beta_a^{-\frac{\sigma}{2-\sigma}} \|\nabla w_a\|^2_{L^2}.
	\end{align*}
	It follows that
	\[
	\frac{I(a)}{a} = \frac{E(v_a)}{a} \geq \frac{\beta_a^{-\frac{\sigma}{2-\sigma}} }{a} \left( \frac{1}{2} \|\nabla w_a\|^2_{L^2} - \frac{1}{2} G(w_a) \right).
	\]
	Since $w_a \rightarrow w$ strongly in $H^1$ and $G(w_a) \rightarrow G(w)$ as $a \nearrow a^*$, we see that
	\begin{align*}
	\beta_a^{\frac{\sigma}{2-\sigma}} \frac{I(a)}{a} &\geq \frac{1}{2} \left( \frac{\|\nabla w\|^2_{L^2}}{a^*} - \frac{G(w)}{a^*} \right) + o_{a\nearrow a^*} (1) \\
	&= \frac{1}{2} \left( \frac{\lambda_0^2 \|\nabla Q\|^2_{L^2}}{a^*} - \frac{\lambda_0^\sigma G(Q(\cdot-x_0))}{a^*} \right) + o_{a\nearrow a^*} (1) \\
	&= \frac{\lambda_0^2 d}{4} - \frac{\lambda_0^\sigma}{2} G(Q_0(\cdot-x_0)) + o_{a \nearrow a^*} (1).
	\end{align*}
	This implies that
	\begin{align} \label{liminf-Ia}
	\liminf_{a\nearrow a^*} \beta_a^{\frac{\sigma}{2-\sigma}} \frac{I(a)}{a} \geq \frac{\lambda_0^2 d}{4} - \frac{\lambda_0^\sigma}{2} G(Q_0(\cdot-x_0)) \geq \frac{\lambda_0^2 d}{4} - \frac{\lambda_0^\sigma}{2} G(Q_0),
	\end{align}
	where the last inequality follows from the Hardy-Littlewood rearrangement inequality and the fact $Q_0$ is radially symmetric and radially decreasing. Note that the equality holds if and only if $x_0 = 0$. We infer from \eqref{limsup-Ia} and \eqref{liminf-Ia} that $x_0 =0$ and
	\[
	\lim_{a \nearrow a^*} \beta_a^{\frac{\sigma}{2-\sigma}} \frac{I(a)}{a} = \frac{\lambda_0^2 d}{4} - \frac{\lambda_0^\sigma}{2} G(Q_0) = \min_{\lambda>0} \left(\frac{\lambda^2 d}{4} - \frac{\lambda^\sigma}{2} G(Q_0) \right).
	\]
	A direct computation shows that
	\[
	\lambda_0 = \left( \frac{\sigma G(Q_0)}{d} \right)^{\frac{1}{2-\sigma}}. 
	\]
	In conclusion, we have proved that up to a subsequence,
	\[
	\beta_a^{\frac{d}{2(2-\sigma)}} v_a(\beta_a^{\frac{1}{2-\sigma}} \cdot) \rightarrow \lambda_0^{\frac{d}{2}} Q(\lambda_0 \cdot) \text{ strongly in } H^1  \text{ as } a \nearrow a^*
	\]
	with $\lambda_0 = \left( \frac{\sigma G(Q_0)}{d} \right)^{\frac{1}{2-\sigma}}$. Moreover, by the uniqueness of $Q$, we can conclude the above limit holds for the whole sequence $(v_a)_{a \nearrow a^*}$. 
	
	\section*{Appendix}
	
	\paragraph{\bf The radial compact embedding}
	We first give a proof of the radial compact embedding \eqref{com-emb}. It is enough to show that if $(v_n)_{n\geq 1} \subset H^1_{\text{rd}}$ is such that $v_n\rightharpoonup 0$, then $v_n\rightarrow 0$ in $L^q$. Of course we can assume that $v_n \rightarrow 0$ in $L^q_{\text{loc}}$ and $v_n \rightarrow 0$ almost everywhere. It follows that
	\[
	\|v_n\|^q_{L^q} = \|v_n\|^q_{L^q(B)} + \|v_n\|^q_{L^q(B^c)} = \|v_n\|^q_{L^q(B^c)} + o_n(1)
	\]
	as $n\rightarrow \infty$, where $B=\{ x \in \R^d \ : \ |x| <1\}$ and $B^c= \R^d \backslash B$. Since $(v_n)_{n\geq 1}$ is bounded in $H^1$, by \eqref{rad-decrea-est}, there exists $C(d)>0$ independent of $n$ such that
	\[
	|v_n(x)|^q \leq C(d) |x|^{-\frac{dq}{2}}.
	\]
	The last term is integrable on $B^c$ since $\frac{dq}{2}>d$. By the dominated convergence,
	\[
	\|v_n\|^q_{L^q(B^c)} \rightarrow 0
	\]
	as $n\rightarrow \infty$. 
	
	\paragraph{\bf The uniqueness of positive radial solution for \eqref{ordinary-equation}}
	Using the general results of Shioji-Watanabe \cite{SW}, we have that for any $\omega>-\mu_1$, $0<\sigma<1$, $d\geq 3$ and $0<\alpha<\frac{4}{d-2}$, there exists a unique positive solution to \eqref{ordinary-equation}. In fact, in \cite[Theorem 1]{SW}, Shioji-Watanabe proved a uniqueness result for 
	\[
	\phi''(r) + \frac{f'(r)}{f(r)} \phi'(r) + g(r) \phi(r) + h(r) \phi^p(r) =0, \quad r \in (0,+\infty)
	\]
	under appropriate assumptions on $f(r), g(r)$ and $h(r)$. In our case, we have
	\[
	f(r) = r^{d-1}, \quad g(r) = cr^{-\sigma}-\omega, \quad h(r)=1, \quad p=\alpha+1.
	\]
	Applying Theorem 1 in \cite{SW}, the uniqueness of positive solution to \eqref{ordinary-equation} holds if the following conditions are satisfied:
	\begin{itemize}
		\item[(I)] $g \in C^1(0,+\infty)$, $h \in C^3(0,+\infty)$ and $h(r)>0$ for every $r \in(0,+\infty)$.
		\item[(II)] $$\lim_{r\rightarrow 0} r^{1-d} \int_0^r \tau^{d-1} (|g(\tau)| + h(\tau)) d\tau =0.$$
		\item[(III)] There exists $r_0 \in (0,+\infty)$ such that 
		\begin{itemize}
			\item[(i)] $r^{d-1} g(r), r^{d-1} h(r) \in L^1(0,r_0)$;
			\item[(ii)] $$r^{d-1} (|g(r)| +h(r)) \left(\frac{r_0^{2-d} - r^{2-d}}{2-d} \right) \in L^1(0,r_0).$$
		\end{itemize}
		\item[(IV)] $\lim_{r\rightarrow 0} a(r) <\infty$, $\lim_{r\rightarrow 0} |b(r)|<\infty$, $\lim_{r\rightarrow 0} c(r) \in [0,\infty]$, $\lim_{r\rightarrow 0} a(r) g(r) =0$ and $\lim_{r\rightarrow 0} a(r) h(r) =0$, where
		\begin{align*}
		a(r) &= r^{\frac{2(d-1)(p+1)}{p+3}} h(r)^{-\frac{2}{p+3}}, \\
		b(r) &=-\frac{1}{2} a'(r) + \frac{d-1}{r} a(r), \\
		c(r) &= -b'(r) +\frac{d-1}{r} b(r).
		\end{align*}
		\item[(V)] There exists $r_1 \in (0,\infty)$ such that $G(r) >0$ on $(0,r_1)$ and $G(r)<0$ on $(r_1,+\infty)$, where
		\[
		G(r) = -b(r) g(r) + \frac{1}{2} c'(r) + \frac{1}{2} (a'(r) g(r) + a(r) g'(r)).
		\]
		\item[(VI)] $G^- \neq 0$, where $G^- := \min \{G(r),0\}$ for $r \in (0,+\infty)$.
	\end{itemize}
	
Under the assumptions $\omega>-\mu_1$, $0<\sigma<1$, $d\geq 3$ and $0<\alpha<\frac{4}{d-2}$, it is easy to see that (I), (II) and (III) hold. By a direct computation, we have
\begin{align*}
a(r) &= r^{\frac{2(d-1)(\alpha+2)}{\alpha+4}}, \\
b(r) &= \frac{2(d-1)}{\alpha+4} r^{\frac{(2d-3)\alpha + 4(d-2)}{\alpha+4}},\\
c(r) &= \frac{2(d-1) (4-(d-2)\alpha)}{(\alpha+4)^2} r^{\frac{2(d-2)\alpha + 4(d-3)}{\alpha+4}}.
\end{align*}
Since $d\geq 3$ and $0<\sigma<1$, we see that (IV) is satisfied. Finally, we have
\begin{align*}
G(r) &= A r^{\frac{(2d-3)\alpha +4(d-2)}{\alpha+4}} + B r^{\frac{(2d-3)\alpha +4(d-2)}{\alpha+4} - \sigma} + C r^{\frac{(2d-3)\alpha +4(d-2)}{\alpha+4}-2} \\
&= (Ar^2+ Br^{2-\sigma} + C) r^{\frac{(2d-3)\alpha +4(d-2)}{\alpha+4}-2},
\end{align*}
where
\begin{align*}
A&= -\frac{\omega \alpha(d-1)}{\alpha+4}, \\
B&= \frac{c((2d-2-\sigma)\alpha - 4\sigma)}{2(\alpha+4)}, \\
C&= \frac{(d-1) (4-(d-2)\alpha) [2(d-2)\alpha+4(d-3)]}{(\alpha+4)^3}.
\end{align*}
Since $A<0$ and $C>0$, there exists $r_1 >0$ such that $G(r)>0$ on $(0,r_1)$ and $G(r)<0$ on $(r_1,\infty)$ which shows (V) and aslo (VI). 
	
	\section*{Acknowledgement}
	This work was supported in part by the Labex CEMPI (ANR-11-LABX-0007-01). The author would like to express his deep gratitude to his wife - Uyen Cong for her encouragement and support. He also would like to thank the reviewer for his/her helpful comments and suggestions.

\end{document}